\numberwithin{equation}{section}
\newtheorem{theorem}{Theorem}[section]
\newtheorem{lemma}{Lemma}[section]
\newtheorem{corollary}{Corollary}[section]
\newtheorem{assumption}{Assumption}[section]
\title{Mean field error estimate of the random batch method for large interacting particle system}
\author[1]{Zhenyu Huang\thanks{E-mail: zhenyuhuang@sjtu.edu.cn}}
\author[2]{Shi Jin \thanks{E-mail: shijin-m@sjtu.edu.cn}}
\author[2,3]{Lei Li \thanks{E-mail: leili2010@sjtu.edu.cn}}
\affil[1]{School of Mathematical Sciences, Shanghai Jiao Tong University, Shanghai, 200240, P.R.China}
\affil[2]{School of Mathematical Sciences, Institute of Natural Sciences, MOE-LSC, Shanghai Jiao Tong University, Shanghai, 200240, P.R.China}
\affil[3]{Shanghai Artificial Intelligence Laboratory, Shanghai, P.R.China}
\date{}
\begin{document}

\maketitle

\begin{abstract}
The random batch method (RBM) proposed in  [Jin et al., J. Comput. Phys., 400(2020), 108877] for large interacting particle systems is an efficient with linear complexity in particle numbers and highly scalable algorithm for $N$-particle interacting systems and their mean-field limits when $N$ is large.  We consider in this work the quantitative error estimate of  RBM toward its mean-field limit, the Fokker-Planck equation. Under mild assumptions, we obtain a uniform-in-time $O(\tau^2+1/N)$ bound on the scaled relative entropy between the joint law of the random batch particles and the tensorized law at the mean-field limit, where $\tau$ is the time step size and $N$ is the number of particles. Therefore, we improve the existing rate in discretization step size from $O(\sqrt{\tau})$ to $O(\tau)$ in terms of the Wasserstein distance.
\end{abstract}

{\it Keywords: relative entropy, random batch method, interacting particle system, propagation of chaos}

\section{Introduction}\label{sec1}
Interacting particle systems arise in a variety of important problems in physical, social, and biological sciences, for example, molecular dynamics \cite{frenkel2023understanding}, swarming \cite{carlen2013kinetic,carrillo2019particle,degond2017coagulation,vicsek1995novel}, chemotaxis \cite{bertozzi2012characterization,horstmann20031970}, flocking \cite{albi2013binary,cucker2007emergent,ha2009simple}, synchronization \cite{choi2011complete,ha2014complete}, consensus \cite{motsch2014heterophilious} and random vortex model \cite{majda2002vorticity}. In this paper, we consider the following general first order system of $N$ particles:
\begin{equation}\label{firstorder}
d X^i=b\left(X^i\right) d t+\frac{1}{N-1} \sum_{j: j \neq i} K\left(X^i-X^j\right) d t+\sqrt{2\sigma} d W^i, \quad i=1,2, \ldots, N
\end{equation}
Here $X^i \in \mathbb{R}^d$ are the labels for particles, $b: \mathbb{R}^d \to \mathbb{R}^d$ is the drift force, $K: \mathbb{R}^d \to \mathbb{R}^d$ is the interaction kernel, $\sigma > 0$ is the diffusion coefficient and $W^i$ are $N$ independent $d$-dimensional Wiener processes (standard Brownian motions). We assume that the initial values $X^i (0) =: X^i_0$ are drawn independently from the same distribution $\rho_0$.

As is well known, under certain conditions, the mean field limit (i.e., $N \rightarrow \infty$ ) of (\ref{firstorder}) is given by the following nonlinear Fokker-Planck equation:
\begin{equation}\label{FPE}
    \partial_t \rho=-\nabla \cdot((b(x)+K * \rho) \rho)+\sigma \Delta \rho,
\end{equation}
where $\rho(t,x)$ is the particle density distribution. The convergence of the interacting particle system (\ref{firstorder}) towards the Fokker-Planck equation (\ref{FPE}) as $N \to \infty$ has been systemically studied in \cite{kac1956foundations,mckean1967propagation,jabinwang2017mean,golse2016dynamics,chaintron2022propagation,chaintron2022propagation2}. 

If one numerically discretizes (\ref{firstorder}) directly, the computational cost per time step is $O(N^2)$, which is prohibititively expensive for large $N$. The Random Batch Method (RBM) proposed in \cite{jin2020random} is a simple and generic random algorithm to reduce the computation cost per time step from $O(N^2)$ to $O(N)$. The idea was to utilize small but random batch, so that interactions only occur inside the small batches at each time step. This random mini batch idea was the key component of  the so-called stochastic gradient descent (SGD) \cite{robbins1951stochastic, bubeck2015convex} in machine learning. Due to the simplicity and scalability, it already has a variety of applications in solving the Poisson-Nernst-Planck, Poisson-Boltzmann and Fokker-Planck-Landau equations \cite{li2020some, carrillo2021random}, efficient sampling \cite{doi:10.1137/19M1302077}, molecular simulation \cite{doi:10.1137/20M1371385,liang2022superscalability}, and quantum Monte-Carlo method \cite{jinlixian2020random}. Readers can refer to the review article \cite{jinli2021random}.

The RBM algorithm corresponding to (\ref{firstorder}) is given in Algorithm \ref{algo1}. Suppose one aims to do the simulation until time $T>0$. One first chooses a time step $\tau>0$ and a batch size $p \ll N, p \geqslant 2$ that divides $N$. Define the discrete time  $t_k:=k \tau, k \in \mathbb{N}$. For each time subinterval $\left[t_{k-1}, t_k\right)$, there are two steps: (1) at time $t_{k-1}$, one divides the $N$ particles into $n:=N / p$ groups (batches) randomly; (2) the particles evolve with interaction inside the batches only. 

\begin{algorithm}
    \caption{The Random Batch Method (RBM)}\label{algo1}
   \begin{algorithmic}[1]
    \FOR{$k$ = $1:[T / \tau]$} 
    \STATE Divide $\{1,2, \ldots, N\}$ into $n=N / p$ batches randomly. 
    \FOR{each batch $\xi_k$} 
    \STATE Update $\bar{X}^i$ 's $\left(i \in \xi_k\right)$ by solving the following stochastic differential equation (SDE) with $t \in\left[t_{k-1}, t_k\right):$ 
        \begin{equation}\label{rbmfirst}
         d \bar{X}^i=b\left(\bar{X}^i\right) d t+\frac{1}{p-1} \sum_{j \in \xi_k, j \neq i} K\left(\bar{X}^i-\bar{X}^j\right) d t+\sqrt{2\sigma} d W^i.
         \end{equation}
    \ENDFOR
    \ENDFOR 
\end{algorithmic}
\end{algorithm}

The RBM is not only an efficient numerical method for approximating the original particle system \eqref{firstorder}, but also, when $N$ is large, a numerical particle method for the Fokker-Planck equation \eqref{FPE}.  The goal of this paper is to provide an explicit bound  $O(\tau^2+1/N)$ of the rescaled relative entropy between the random batch system (\ref{rbmfirst}) and the Fokker-Planck equation (\ref{FPE}), as given in Theorem \ref{thm}. More precisely, denote
$$
\bar{\rho}_t^N = \text{Law}(\bar{X}_t^1, \cdots, \bar{X}_t^N),
$$
here $\bar{X}_t^1, \cdots, \bar{X}_t^N$ are the continuous version of the random batch particles defined below, which coincide with the Euler-Maruyama discretization of (\ref{rbmfirst}) at time $T_k$:
\begin{equation}\label{ctsem}
    \bar{X}_{t}^i=\bar{X}_{T_k}^i+(t-T_k) b\left(\bar{X}_{T_k}^i\right)+\frac{(t-T_k)}{p-1}\sum_{j\in\xi_k} K(\bar{X}_{T_k}^i-\bar{X}_{T_k}^j)+\sqrt{2 \sigma} (W_t^i-W_{T_k}^i),\quad t \in\left[T_k, T_{k+1}\right).
\end{equation} 
Set
$$
\rho^N_t\left(x_1, \ldots, x_N\right)=\rho_t^{\otimes N},
$$
where $\rho_t$ is the solution to  the mean field Fokker-Planck equation (\ref{FPE}). In this paper, we 
will prove the following uniform-in-time relative entropy bound
\begin{equation}
        \sup_{t} \mathcal{H}_N\left( \bar{\rho}^N_t \mid \rho^N_t \right) \leq \mathcal{H}_N\left( \bar{\rho}^N_0 \mid \rho^N_0 \right)+c_1 \tau^2 + \frac{c_2}{N},
\end{equation}
where the constants $c_1$ and $c_2$ are independent of $N$. Here, 
$$
\mathcal{H}_N\left( \bar{\rho}^N_t \mid \rho^N_t \right)=\frac{1}{N} \int_{\mathbb{R}^{d N}} \bar{\rho}^N_t\left( \textbf{x}\right) \log \frac{\bar{\rho}^N_t\left(\textbf{x}\right)}{\rho^N_t\left(\textbf{x}\right)} d \textbf{x},
$$
is the rescaled relative entropy. 

 Our proof consists of the following ingredients.  Firstly, we study the time evolution of the relative entropy of the joint law and the tensorized law based on the Markov property of the discrete random batch. In order to do so, like \cite{li2022sharp}, we derive a Liouville equation for the  joint law for the fixed batch version of the RBM, where the drift term is the backward conditional expectation at the previous step, conditioned on the current value of $N$ particles. Then one can define the joint law of some given $\xi_k$, namely $\bar{\varrho}_t^{N,\xi_k}$ by the Markov property. Since $$\bar{\rho}_{t}^N=\mathbb{E}_{\xi_k}\left[\bar{\varrho}_{t}^{N,\xi_k}\right],$$ therefore one can obtain the desired time evolution equation. Secondly, during the proof,  like in \cite{li2022sharp},  techniques including integration by parts (to eliminate the Gaussian noise that would possibly lower the convergence rate) and the Girsanov transform are used. We also obtain the estimate of the Fisher information for the joint law of the fixed batch version of the RBM, as was done in \cite{mou2022improved}. Thirdly, we use the Law of Large Number at the exponential scale as in \cite{jabin2018quantitative}.
 
 Several systems will be involved for our analysis.
 For clarity, in Fig.\ref{fig:enter-label}, we depict the relationships between these systems.

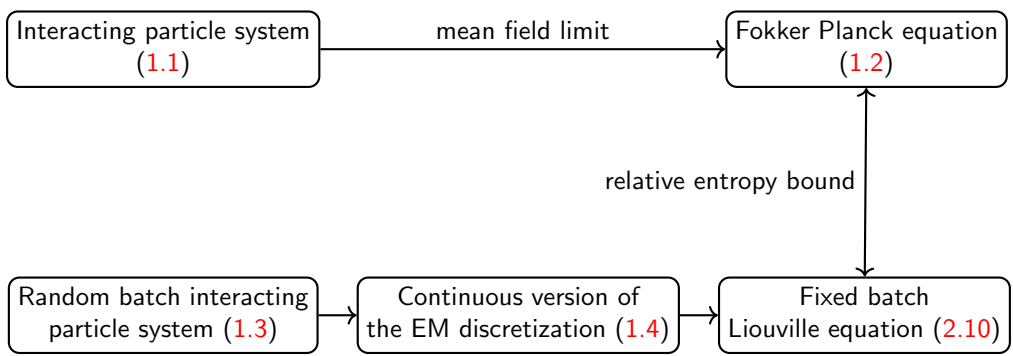
\begin{figure}[H]
    \centering
\begin{tikzpicture}[font=\sffamily, thick, node distance=0.5cm, 
    box/.style = {draw, rounded corners, align=center, minimum height=1cm}]
     \node[box, ] (RBMIPS) {Random batch interacting \\ particle system (\ref{rbmfirst})};
     \node[box, right=of RBMIPS ] (Euler) {Continuous version
     of\\ the EM discretization (\ref{ctsem})};
     \node[box, right=of Euler ] (Liou) {Fixed batch \\Liouville equation (\ref{liou})};
     \node[box, above=2.5cm of RBMIPS] (IPS) {Interacting particle system\\(\ref{firstorder})};
     \node[box, right=5.325 cm of IPS] (FP) {Fokker Planck equation\\(\ref{FPE})};
    \draw[->] (IPS) --node[above]  {mean field limit} (FP);
    \draw[->] (RBMIPS) -- (Euler);
    \draw[->] (Euler) -- (Liou);
    \draw[<->] (Liou) -- node[left]  {relative entropy bound} (FP);
\end{tikzpicture}
    \caption{Illustration of the various equations.}
    \label{fig:enter-label}
\end{figure}
Recently, there have been some theoretical results on the random batch method. The strong and weak error analysis for the RBM has been conducted in \cite{jin2021convergence} and they show that the strong error is of $O( \sqrt{\tau} )$ while the weak error is of $O(\tau)$ for interacting particles with  disparate species and weights. Moreover, the theoretical justification for the sampling accuracy is done in \cite{jin2023ergodicity} and they give the geometric ergodicity and the long time behavior of the RBM for interacting particle systems. They show that the Wasserstein distance between the invariant distributions of the interacting particle systems and the random batch interacting particle systems is bounded by $O(\sqrt{\tau})$. Besides, the error estimate and the long-time behavior of the discrete time approximation of random batch method was studied in \cite{ye2022error} using the triangle inequality framework, and  still  an $O(\sqrt{\tau})$ bound is obtained. We note that  the order of accuracy in the time step $\tau$ may not be optimal in the work \cite{jin2023ergodicity,ye2022error}. This is because they  used the strong error estimate of RBM.  The novelty of our result is to go beyond the existing strong error of RBM. We analyze the law of RBM at the level of the Liouville equation, enabling us to improve the order of convergence. It is worth mentioning that \cite{jin2022mean} also investigates the mean field limit of RBM, which gives rise to a mean field limit for $p$-body particle systems,  unlike the classical one-body mean field limit for interacting particle systems. Our work can be seen as an extension and improvement of the work \cite{jin2022mean}. More precisely, unlike the propagation of chaos in the classical mean field limit for interacting particle systems, the chaos in \cite{jin2022mean} is imposed at every discrete time. In other words, their argument of the mean field limit relies on the fact that two particles are unlikely to be related in RBM when $N \to \infty$ for finite iterations. Our results provide a uniform-in-time explicit error estimate between system (\ref{rbmfirst}) and the mean-field limit (\ref{FPE}) and directly prove the classical propagation of chaos for RBM.

The rest of the paper is organized as follows. We first introduce some preliminaries in Section \ref{sec2} including the Euler-Maruyama discretization of random batch and the Liouville equation for the joint law for the case of the fixed batch size. In Section \ref{sec3}, we present some auxiliary results which are useful in the proof of the main results. In Section \ref{mainthm}, we present the main result: the uniform-in-time estimate of the relative entropy between the joint law of the random batch particles and the tensorized law for the limit equation (the Fokker-Planck equation) and provide the detailed proof. Some technical details that are not so essential are moved to Appendix \ref{append}.


\section{Preliminaries}\label{sec2}
\subsection{Discrete random batch}
For the convenience of the analysis, we define
\begin{equation}
    K^{\xi_k}(\bar{X}_t^i) = \frac{1}{p-1}\sum_{j\in \xi_k,j \neq i}K(\bar{X}_t^i-\bar{X}_t^j), \quad t\in \left[T_k, T_{k+1}\right).
\end{equation}
Consider the Euler-Maruyama scheme with constant time step for (\ref{firstorder}):
\begin{equation}\label{EM}
    X^i_{T_{k+1}} = X^i_{T_{k}} + \tau b(X^i_{T_{k}}) + \frac{\tau}{N-1} \sum_{j: j\neq i} K(X^i_{T_{k}}-X^j_{T_{k}}) + \sqrt{2\sigma} \left(W_{T_{k+1}}^i-W_{T_k}^i\right).
\end{equation}
Similarly, the Euler-Maruyama scheme for the RBM  with time step $\tau$ at the $k$-th iteration for the $i$-th particle is:
\begin{equation}
    \bar{X}_{T_{k+1}}^i=\bar{X}_{T_k}^i+\tau b\left(\bar{X}_{T_k}^i\right)+\tau K^{\xi_k}(\bar{X}_{T_k}^i)+\sqrt{2 \sigma } \left(W_{T_{k+1}}^i-W_{T_k}^i\right),
\end{equation}
with $T_k:=k \tau$. Here, $\xi_k$ are i.i.d. sampled. Also we consider the following continuous version which coincides with the discrete RBM at grid point $T_k$ :
\begin{equation}\label{onerbm}
    \bar{X}_{t}^i=\bar{X}_{T_k}^i+(t-T_k) b\left(\bar{X}_{T_k}^i\right)+(t-T_k) K^{\xi_k}(\bar{X}_{T_k}^i)+\sqrt{2 \sigma} (W_t^i-W_{T_k}^i),\quad t \in\left[T_k, T_{k+1}\right).
\end{equation}
Denote $$\bar{\textbf{X}}_{T_{k}}=(\bar{X}^1_{T_k}, \cdots , \bar{X}^N_{T_k})^T \in \mathbb{R}^{Nd},$$ then the Euler-Maruyama scheme for the RBM  for $N$ particles with time step $\tau$ at the $k$-th iteration can be written as:
\begin{equation}\label{Nrbm}
     \bar{\textbf{X}}_{T_{k+1}} = \bar{\textbf{X}}_{T_k} + \tau \textbf{b}(\bar{\textbf{X}}_{T_k}) + \tau \textbf{K}^{\xi_k} (\bar{\textbf{X}}_{T_k}) + \sqrt{2\sigma } \left(\textbf{W}_{T_{k+1}}-\textbf{W}_{T_k}\right),
\end{equation}
where 
\begin{equation}\label{b}
    \textbf{b}(\bar{X}_{T_k}) = \left(b\left(\bar{\textbf{X}}_{T_k}^1\right),\cdots,b\left(\bar{X}_{T_k}^N\right)\right)^{T}\in \mathbb{R}^{Nd};
\end{equation}
\begin{equation}\label{K}
    \textbf{K}^{\xi_k} (\bar{\textbf{X}}_{T_k}) = \left(K^{\xi_k}(\bar{X}_{T_k}^1),\cdots,K^{\xi_k}(\bar{X}_{T_k}^N)\right)^T\in \mathbb{R}^{Nd},
\end{equation}
and $\textbf{Z}_k\sim N(0,I_{Nd})$.
At time $t$, the continuous version for $N$ particles at grid point $T_k$ \eqref{onerbm} can be written as:
\begin{equation}\label{disrbm}
    \bar{\textbf{X}}_t = \bar{\textbf{X}}_{T_k} + (t-T_k) \textbf{b}(\bar{\textbf{X}}_{T_k}) + (t-T_k) \textbf{K}^{\xi_k} (\bar{\textbf{X}}_{T_k}) + \sqrt{2\sigma} \left( \textbf{W}_t - \textbf{W}_{T_k}\right),
\end{equation}
where $\textbf{W}_t = \left(W_t^1,\cdots,W_t^N\right)^T$.  

We refer (\ref{Nrbm})  as the discrete-RBM (or fixed batch size RBM), abbreviated as \textbf{d-RBM} and (\ref{disrbm}) as the continuous-RBM, abbreviated as \textbf{c-RBM}. 

Furthermore, denote the $N$ particle joint law of $ \bar{\textbf{X}}_t$ by $\bar{\rho}_t^N$. Following the basic approach introduced in \cite{jabin2016mean,jabin2018quantitative}, our main idea is to use the relative entropy methods to compare the joint law $\bar{\rho}^N_t\left(x_1, \ldots, x_N\right)$ of the c-RBM (\ref{disrbm}) to the tensorized law
$$
\rho^N_t\left(x_1, \ldots, x_N\right)=\rho^{\otimes N}=\Pi_{i=1}^N \rho^i_t\left(x_i\right),
$$
consisting of $N$ independent copies of a process following the law $\rho$, solution to the mean-field limit equation, the Fokker-Planck equation (\ref{FPE}). One can readily check that $\rho_t^N$ solves
\begin{equation}\label{tensorFK}
    \partial_t \rho^N_t + \sum_{i=1}^N \operatorname{div}_{x_i} \left(\rho_t^N \left(b(x_i)+K * \rho_t(x_i)\right)\right)=\sum_{i=1}^N \sigma \Delta_{x_i}\rho_t^N.
    \end{equation}
Our method revolves around the control of the {\it rescaled relative entropy}
$$
\mathcal{H}_N\left( \bar{\rho}^N_t \mid \rho^N_t \right)=\frac{1}{N} \int_{\mathbb{R}^{d N}} \bar{\rho}^N_t\left( \textbf{x}\right) \log \frac{\bar{\rho}^N_t\left(\textbf{x}\right)}{\rho^N_t\left(\textbf{x}\right)} d \textbf{x},
$$
where $\textbf{x}=(x_1,\cdots,x_N)^T$. The proof of our main results is based on the Markov property of the d-RBM. More precisely, let $\bar{\varrho}_t^{N, \boldsymbol{\xi}}$ be the probability density of c-RBM (\ref{disrbm}) for a given sequence of batches $\boldsymbol{\xi}:=\left(\xi_0, \xi_1, \cdots, \xi_k, \cdots\right)$. Consequently, we have the following expression of the density:
$$
\bar{\rho}_t^N(\textbf{x})=\mathbb{E}_{\boldsymbol{\xi}}\left[\bar{\varrho}_t^{N, \boldsymbol{\xi}}(\textbf{x})\right].
$$
Here, $\mathbb{E}_{\boldsymbol{\xi}}\left[\bar{\varrho}_t^{N, \boldsymbol{\xi}}(\cdot)\right]$ means taking expectation for all possible choice of batch $\boldsymbol{\xi}$. Note that $\bar{\varrho}_t^{N, \boldsymbol{\xi}}$ is consistent with an analogue of the Liouville equation, whose explicit expression will be given in Lemma \ref{Lioulemma}. 

\subsection{An analogue of the Liouville equation}
Consider the c-RBM (\ref{disrbm}).
Our goal is to prove that the probability density of the continuous version satisfies an analogue of the Liouville equation with time-varying drift terms.
\begin{lemma}\label{Lioulemma}
 Denote by $\bar{\varrho}_t^{N, \boldsymbol{\xi}}$ the probability density function of $\bar{\textbf{X}}_t = \left(\bar{X}_t^1, \cdots, \bar{X}_t^N\right)$ defined in (\ref{disrbm}) for $t\in\left[T_k, T_{k+1}\right)$. Then the following Liouville equation holds:
\begin{equation}\label{liou}
    \partial_t \bar{\varrho}_t^{N, \boldsymbol{\xi}}+\sum_{i=1}^N \operatorname{div}_{x_i}\left(\bar{\varrho}_t^{N, \boldsymbol{\xi}}\left(\hat{b}_{t}^{\boldsymbol{\xi},i}\left(\textbf{x}\right)+ \hat{K}_t^{\boldsymbol{\xi},i}\left(\textbf{x}\right)\right)\right)=\sum_{i=1}^N \sigma \Delta_{x_i} \bar{\varrho}_t^{N, \boldsymbol{\xi}},
\end{equation}
where
\begin{equation}\label{lioub}
\hat{b}_t^{\boldsymbol{\xi},i} \left(\textbf{x}\right)=\mathbb{E}\left[b\left(\bar{X}_{T_k}^i\right) \mid \bar{\textbf{X}}_t=\textbf{x},\boldsymbol{\xi} \right], \quad t \in\left[T_k, T_{k+1}\right),
\end{equation}
and
\begin{equation}\label{liouK}
\hat{K}_t^{\boldsymbol{\xi},i}\left(\textbf{x}\right):=\mathbb{E}\left[K^{\xi_k}\left(\bar{X}_{T_k}^i\right) \mid \bar{\textbf{X}}_t=\textbf{x},\boldsymbol{\xi}\right], \quad t \in\left[T_k, T_{k+1}\right).
\end{equation}
Here, $\textbf{x}=(x_1, \cdots, x_n) \in \mathbb{R}^{Nd}$.
\end{lemma}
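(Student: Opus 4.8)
The plan is to establish the stated equation in its weak (distributional) form by applying It\^o's formula to a smooth test function, taking expectations to discard the martingale contribution, and then invoking the tower property of conditional expectation to recognize the frozen-drift terms as the conditional expectations $\hat{b}_t^{\boldsymbol{\xi},i}$ and $\hat{K}_t^{\boldsymbol{\xi},i}$. Throughout, the batch sequence $\boldsymbol{\xi}$ is fixed, and it suffices to argue on a single subinterval $t\in[T_k,T_{k+1})$, since the conditioning at $T_k$ supplies the initial datum $\bar{\varrho}_{T_k}^{N,\boldsymbol{\xi}}$ for that interval. Differentiating (\ref{disrbm}) on this interval shows that $\bar{\textbf{X}}_t$ solves the It\^o SDE
\begin{equation*}
d\bar{X}_t^i = \left(b(\bar{X}_{T_k}^i)+K^{\xi_k}(\bar{X}_{T_k}^i)\right)dt + \sqrt{2\sigma}\,dW_t^i,
\end{equation*}
whose drift is frozen at the left endpoint $T_k$ and is therefore a function of the past value $\bar{\textbf{X}}_{T_k}$ rather than of the current state $\bar{\textbf{X}}_t$.

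First I would fix a test function $\varphi\in C_c^\infty(\mathbb{R}^{Nd})$ and apply It\^o's formula to $\varphi(\bar{\textbf{X}}_t)$. Using the SDE above together with the quadratic covariation $d\l \bar{X}^{i},\bar{X}^{j}\r = 2\sigma\,\delta_{ij}I\,dt$, one obtains
\begin{equation*}
d\varphi(\bar{\textbf{X}}_t)=\sum_{i=1}^N \nabla_{x_i}\varphi(\bar{\textbf{X}}_t)\cdot\left(b(\bar{X}_{T_k}^i)+K^{\xi_k}(\bar{X}_{T_k}^i)\right)dt+\sigma\sum_{i=1}^N\Delta_{x_i}\varphi(\bar{\textbf{X}}_t)\,dt+\sqrt{2\sigma}\sum_{i=1}^N\nabla_{x_i}\varphi(\bar{\textbf{X}}_t)\cdot dW_t^i.
\end{equation*}
Taking the conditional expectation given $\boldsymbol{\xi}$ annihilates the stochastic integral, so that $\frac{d}{dt}\mathbb{E}[\varphi(\bar{\textbf{X}}_t)\mid\boldsymbol{\xi}]$ equals the expectation of the two drift-type terms plus the Laplacian term.

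Next I would convert the frozen drift into a genuine vector field of the current state. Since $\nabla_{x_i}\varphi(\bar{\textbf{X}}_t)$ is $\sigma(\bar{\textbf{X}}_t)$-measurable, the tower property gives
\begin{equation*}
\mathbb{E}\left[\nabla_{x_i}\varphi(\bar{\textbf{X}}_t)\cdot\left(b(\bar{X}_{T_k}^i)+K^{\xi_k}(\bar{X}_{T_k}^i)\right)\Big|\boldsymbol{\xi}\right]=\mathbb{E}\left[\nabla_{x_i}\varphi(\bar{\textbf{X}}_t)\cdot\left(\hat{b}_t^{\boldsymbol{\xi},i}(\bar{\textbf{X}}_t)+\hat{K}_t^{\boldsymbol{\xi},i}(\bar{\textbf{X}}_t)\right)\Big|\boldsymbol{\xi}\right],
\end{equation*}
with $\hat{b}_t^{\boldsymbol{\xi},i}$ and $\hat{K}_t^{\boldsymbol{\xi},i}$ exactly as in (\ref{lioub})--(\ref{liouK}). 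Rewriting both sides as integrals against the density $\bar{\varrho}_t^{N,\boldsymbol{\xi}}$, integrating by parts in $x_i$ to move the derivatives off $\varphi$, and using that $\varphi$ is arbitrary then yields (\ref{liou}).

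The hard part will be the regularity and integrability needed to make these manipulations rigorous. One must first ensure that $\bar{\varrho}_t^{N,\boldsymbol{\xi}}$ exists and is smooth: conditionally on $\bar{\textbf{X}}_{T_k}$ the law of $\bar{\textbf{X}}_t$ is Gaussian with nondegenerate covariance $2\sigma(t-T_k)I$, so $\bar{\varrho}_t^{N,\boldsymbol{\xi}}$ is a mixture of such Gaussians and hence $C^\infty$ with rapidly decaying derivatives. This both legitimizes the integration by parts (killing all boundary terms) and guarantees that the conditional-expectation drifts $\hat{b}_t^{\boldsymbol{\xi},i}$ and $\hat{K}_t^{\boldsymbol{\xi},i}$ are well-defined measurable vector fields. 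One should also invoke mild growth bounds on $b$ and $K$ so that the frozen drift is integrable against the Gaussian mixture, ensuring that all conditional expectations and integrals are finite. Granting these points, the weak formulation holds for every $\varphi\in C_c^\infty(\mathbb{R}^{Nd})$, and the smoothness just established upgrades it to the pointwise identity (\ref{liou}).
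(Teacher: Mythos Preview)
Your proposal is correct and follows essentially the same idea as the paper's proof: both hinge on the observation that the frozen drift $b(\bar{X}_{T_k}^i)+K^{\xi_k}(\bar{X}_{T_k}^i)$, when paired with a function of the current state $\bar{\textbf{X}}_t$, can be replaced by its conditional expectation given $\bar{\textbf{X}}_t$. The paper organizes this slightly differently---writing the Fokker--Planck equation for the conditional density $\bar{\varrho}_t^{N,\boldsymbol{\xi}}\mid\mathcal{F}_{T_k}$ first and then averaging, invoking Bayes' formula for the drift term---whereas you work directly via It\^o's formula on a test function and apply the tower property; but these are two presentations of the same computation.
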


The derivation is not difficult and has appeared in many previous work \cite{li2022sharp,mou2022improved}. For the conveninece of the readers, we also attach a proof in Appendix \ref{liouappend}.

\subsection{Assumptions}
Before the main results and proofs, we firstly give the following assumptions we use throughout this paper.
\begin{assumption}\label{assume}
\begin{itemize}
    \item [(a)]The field $b$: $\mathbb{R}^d \to \mathbb{R}^d$ and the interaction kernel $K$ are Lipshitz:
\[
\left|b(x_1)-b(x_2)\right| \leq r \left|x_1-x_2\right|, \quad \left|K(x)-K(y)\right| \leq L|x-y|.
\]
\item[(b)] The field $b$ is strongly confining in the sense that there exists two constants $\alpha$ and $\beta$ such that for any $x_1 \neq x_2$, :
$$(x_1 -x_2) \cdot(b(x_1)-b(x_2)) \leq \alpha -\beta|x_1-x_2|^2$$
for some constant $\beta>0$ and  $\beta>2L$.
   \item [(c)] Moreover, the Hessians of $b$ and $K$ have at most polynomial growth, namely,
   $$\left|\nabla^2 b(x)\right|  \leq  C(1 + \left|x\right| )^q, \quad \left|\nabla^2 K(x)\right|  \leq  \tilde{C} (1 + \left|x\right| )^q.$$
   \item [(d)]  $K^\xi - F$ is uniformly bounded, namely,
$$
\operatorname{esssup}_{\xi} \|K^{\xi} - F \|_{L^{\infty}(\mathbb{R}^d)} < +\infty,
$$
where $F(x_i) = \frac{1}{N}\sum_{j=1}^N K(x_i - x_j)$.
   \end{itemize}
\end{assumption}

The confining property of $b$ holds if there exists a compact set $S \subset \mathbb{R}^d$ such that for any $x_1, x_2\notin S$, it holds that:
$$
(x_1 -x_2) \cdot(b(x_1)-b(x_2)) \leq -\beta|x_1-x_2|^2.
$$
By definition, $K^{\xi}$ satisfies the same conditions as those for $K$ for any $\xi$.

\section{Some auxiliary results}\label{sec3}

\subsection{The Log-Sobolev inequality}
In this paper, we obtain the  uniform in time estimate under an additional assumption of a log-Sobolev inequality (LSI) for $\rho_t$, uniformly in $t$. We remark that such assumption is a common ingredient in the proof of uniform-in-time propagation of chaos, see also the recent paper \cite{lacker2023sharp}.

\begin{assumption}
    The Log-Sobolev inequality (LSI): There exists a constant $C_{LS}>0$ such that for any nonnegative smooth functions $f$, one has
    \begin{equation}\label{LSI}
        \operatorname{Ent}_{\rho_t}(f):=\int f \log f d\rho_t - \left(\int f d\rho_t \right) \log \left(\int f d\rho_t \right) \leq C_{LS} \int
    \frac{\left|\nabla f\right|^2}{f} d\rho_t.
    \end{equation}
\end{assumption}\

One crucial property of the LSI is the tensorization, i.e. if $\rho_t$ satisfies a LSI then $\rho_t^N=\rho_t^{\otimes^N}$ satisfies the same inequality with the same constant (and thus independent of $N$). 


\subsection{Moment control}
In this section, we aim to find a uniform-in-time bound for the moments $\mathbb{E}\left|\bar{X}_t^i\right|^p$. We have the following fact. The proof is similar to \cite{jin2022mean}, so we omit it.

\begin{lemma}\label{moment}
    Under Assumption \ref{assume}, for any $p>2$, there exists a constant $C_p$ independent of $N$ and $\xi$ such that for any $i$
    \begin{equation}
        \begin{aligned}
             \sup_{t\geq 0} \mathbb{E}\left[\left|\bar{X}_t^i\right|^p\mid \mathcal{F}_{\xi}\right] \leq C_p,
        \end{aligned}
    \end{equation}
    where $\mathcal{F}_{\xi}$ denotes the $\sigma$-algebra generated by sequence $\xi$.
\end{lemma}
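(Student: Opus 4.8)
The plan is to work conditionally on $\mathcal{F}_\xi$, so that the batch sequence is frozen and the only remaining randomness comes from the initial data and the Brownian motions, both independent of $\mathcal{F}_\xi$. On each subinterval $[T_k,T_{k+1})$ the c-RBM \eqref{onerbm} is the diffusion $d\bar X_t^i=(b(\bar X_{T_k}^i)+K^{\xi_k}(\bar X_{T_k}^i))\,dt+\sqrt{2\sigma}\,dW_t^i$, with the drift frozen at the left endpoint $T_k$. I would first establish the mechanism on the second moment (the case of a general $p>2$ being entirely analogous, see below), working with the Lyapunov function $|x|^2$ and either expanding the one-step update directly or applying It\^o's formula. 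Taking conditional expectation kills the martingale part and leaves, per step, the drift term $2\tau\,\langle \bar X_{T_k}^i,\,b(\bar X_{T_k}^i)+K^{\xi_k}(\bar X_{T_k}^i)\rangle$, the diffusion contribution $2\sigma d\,\tau$, and an $O(\tau^2)$ Euler remainder $\tau^2|b+K^{\xi_k}|^2\le C\tau^2(1+|\bar X_{T_k}^i|^2+\cdots)$.

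The dissipation comes from Assumption \ref{assume}(b): comparing $b(x)$ to a fixed reference point gives $x\cdot b(x)\le \alpha'-\beta'|x|^2$ with $\beta'$ arbitrarily close to $\beta$ (the linear remainder being absorbed by Young's inequality). The interaction part is controlled by the Lipschitz bound of Assumption \ref{assume}(a): since $|K(z)|\le|K(0)|+L|z|$, a Young inequality on the cross terms yields
\[
\langle x_i,K^{\xi_k}(x_i)\rangle=\frac{1}{p-1}\sum_{j\in\xi_k,\,j\ne i}\langle x_i,K(x_i-x_j)\rangle\le C|x_i|+\tfrac{3L}{2}|x_i|^2+\frac{L}{2}\cdot\frac{1}{p-1}\sum_{j\in\xi_k,\,j\ne i}|x_j|^2 .
\]
Introducing $M_k:=\sup_i\mathbb{E}[\,|\bar X_{T_k}^i|^2\mid\mathcal{F}_\xi]$, the batch-averaged cross term is bounded by $\tfrac{L}{2}M_k$, so the net coefficient in front of $M_k$ produced by the drift is $-\beta'+\tfrac{3L}{2}+\tfrac{L}{2}=-(\beta'-2L)$. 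The hypothesis $\beta>2L$ is therefore exactly what makes this coefficient strictly negative, giving genuine dissipation uniformly in $N$ and in $\xi$ (the $1/(p-1)$ normalization of $K^{\xi_k}$ is what keeps the cross term $O(1)$).

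The remaining technical point is the frozen coefficient: the drift is evaluated at $\bar X_{T_k}^i$ while in the It\^o version $\nabla V$ is evaluated at $\bar X_t^i$. I would write $b(\bar X_{T_k}^i)=b(\bar X_t^i)+(b(\bar X_{T_k}^i)-b(\bar X_t^i))$ (and likewise for $K^{\xi_k}$) and bound the discrepancy by Lipschitz continuity, $|b(\bar X_{T_k}^i)-b(\bar X_t^i)|\le r|\bar X_t^i-\bar X_{T_k}^i|$; since $|\bar X_t^i-\bar X_{T_k}^i|\le(t-T_k)|b+K^{\xi_k}|+\sqrt{2\sigma}\,|W_t^i-W_{T_k}^i|$ is $O(\sqrt\tau)$ in the relevant norms, these are lower-order corrections that only perturb the constants. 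Collecting everything gives a one-step estimate $M_{k+1}\le(1-c\tau)M_k+C\tau$ with $c>0$ proportional to $\beta-2L$, valid once $\tau$ is small enough that the $O(\tau^2)$ terms do not spoil the sign. This yields the geometric-in-$k$ bound $\sup_k M_k\le\max\{M_0,C/c\}$, uniform in time, $N$ and $\xi$; a final elementary interpolation over $t\in[T_k,T_{k+1})$ using the same increment bound transfers the estimate from grid points to all $t\ge0$.

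For a general moment order $p>2$ the identical scheme applies with the Lyapunov function $(1+|x|^2)^{p/2}$: the It\^o expansion produces the leading dissipative term $\sim-p\,\beta\,(1+|x|^2)^{(p-2)/2}|x|^2$ against which the interaction (again via $\beta>2L$) and the diffusion term $\sigma\Delta V$ are absorbed by Young's inequality, at the cost of tracking several lower-order powers of $|x|$ to keep $C_p$ finite. I expect the main obstacle to be precisely the bookkeeping of the interaction across the batch---verifying that the cross-particle contributions are dominated \emph{uniformly in $N$ and $\xi$} and that the supremum over $i$ is propagated through the recurrence---which is where the structural condition $\beta>2L$ and the batch normalization are essential; everything else is standard It\^o/Gronwall estimation.
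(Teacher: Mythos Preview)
The paper does not give its own proof of this lemma: it simply states ``The proof is similar to \cite{jin2022mean}, so we omit it.'' Your proposal is therefore being compared against the standard argument rather than anything written in the paper, and in that light it is correct and essentially what one expects: a Lyapunov/Gr\"onwall estimate conditional on the batch sequence, using the confining drift from Assumption~\ref{assume}(b) to generate dissipation and the Lipschitz bound from Assumption~\ref{assume}(a) to control the interaction. Your identification of the structural role of $\beta>2L$ is exactly right---the batch-averaged cross term contributes a coefficient $2L$ against $\beta$, and the normalization $1/(p-1)$ in $K^{\xi_k}$ is what keeps that contribution $O(1)$ uniformly in $N$ and in the choice of batches. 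The device of tracking $M_k=\sup_i\mathbb{E}[\,|\bar X_{T_k}^i|^2\mid\mathcal{F}_\xi]$ is the clean way to propagate the estimate through the cross-particle coupling, and the passage from the grid to all $t$ via the one-step increment is routine. The extension to general $p>2$ via $(1+|x|^2)^{p/2}$ is likewise standard; the only bookkeeping is that the interaction term now produces mixed powers that one closes by Young's inequality, still with net coefficient governed by $\beta-2L$.
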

The above result indicates that for a fixed sequence of divisions of random batches $\xi=(\xi_k)_{k\geq0}$, the $p$th moment of $\bar{X}_t^i$ is bounded by $C_p$ independent of $\xi$.
We remark that using the moment control, we can bound terms like $b(\bar{X}_t^i)$ and $K^{\xi}(\bar{X}_t^i)$ which can be found in the proof Theorem \ref{thm}.

\subsection{Estimate of the Fisher information}

The Fisher information for a probability measure $\rho$ is defined by

\begin{equation}
    \mathcal{I}(\rho) = \int \left|\nabla \log \rho\right|^2 \rho dx.
\end{equation}
In our analysis, we require a bound for the Fisher information of $\bar{\varrho}_{T_k}^{N, \boldsymbol{\xi}}$, which is the law of RBM for a given sequence of batches $\boldsymbol{\xi} = (\xi_0, \cdots, \xi_k, \cdots)$ at grid point $T_k$. Our proof is based on Stam's convolution inequality for Fisher information \cite[Eq. (2.9)]{stam1959some}. This inequality guarantees that for any pair of suitably regular probability density functions $p$, $q$ on $\mathbb{R}^d$, the Fisher information satisfies the inequality
\begin{equation}\label{fisherineq}
    \frac{1}{\mathcal{I}(p*q)}\geq \frac{1}{\mathcal{I}(p)}+\frac{1}{\mathcal{I}(q)},
\end{equation}
where $p*q$ denotes the convolution of $p$ and $q$. The d-RBM (\ref{Nrbm}) for $N$ particles at time $T_{k+1}$ can be seen as a combination of applying the deterministic mapping 
\begin{equation}
    \psi_{\tau}^{\xi_k}(\textbf{x}) := \textbf{x} + \tau \left(\textbf{b}(\textbf{x})+\textbf{K}^{\xi_k}(\textbf{x})\right), \quad \textbf{x} = \left(x_1,\cdots,x_n\right)\in \mathbb{R}^{Nd},
\end{equation}
with a convolution step with a Gaussian kernel, where $\textbf{b}$ and $\textbf{K}^{\xi_k}$
are defined as (\ref{b}) and (\ref{K}) respectively. More precisely, $$\textbf{b}(\textbf{x}) = \left(b(x_1), \cdots, b(x_n)\right)^T \in \mathbb{R}^{Nd},$$
and $$\textbf{K}^{\xi_k}(\textbf{x})=\left(K^{\xi_k}(x_1), \cdots, K^{\xi_k}(x_n)\right)^T \in \mathbb{R}^{Nd}.$$
We exploit the inequality (\ref{fisherineq}) so as to bound the Fisher information $\mathcal{I}(\bar{\varrho}_{T_{k+1}}^{N, \boldsymbol{\xi}})$ in terms of $\mathcal{I}(\bar{\varrho}_{T_k}^{N, \boldsymbol{\xi}})$. In order to do so, we bound the Fisher information for the intermediate density after the first step.
\begin{lemma}
    For some step size $\tau < \frac{1}{2(r+L)} $ , let $p_k(\cdot)$ be the density of the random variable $\textbf{Z}_k = \psi_{\tau}^{\xi_k}(\bar{\textbf{X}}_{T_k})$ obtained by applying the deterministic mapping $\psi_{\tau}^{\xi_k}$. Then under Assumption \ref{assume}, we have the bound
\begin{equation}
    \mathcal{I}(p_k) \leq \frac{1+\tau(r+L)}{1-\tau(r+L)} \left( \mathcal{I}(\bar{\varrho}_{T_k}^{N, \boldsymbol{\xi}}) + \frac{M(r+L) N\tau}{1-\tau (r+L)} \right),
\end{equation}
where $M$ is a constant independent of $N$.
\end{lemma}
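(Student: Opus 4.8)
The plan is to realize $p_k$ as the push-forward of $\bar{\varrho}_{T_k}^{N,\boldsymbol{\xi}}$ under the near-identity map $\psi_\tau^{\xi_k} = \mathrm{Id} + \tau\,\textbf{G}^{\xi_k}$, with $\textbf{G}^{\xi_k} := \textbf{b} + \textbf{K}^{\xi_k}$, and to transport the Fisher information through the change of variables. First I would check that $\psi_\tau^{\xi_k}$ is a bi-Lipschitz diffeomorphism. Since $\textbf{b}$ is $r$-Lipschitz and $\textbf{K}^{\xi_k}$ inherits the $L$-Lipschitz bound of $K$ (Assumption \ref{assume}(a) and the remark that $K^\xi$ satisfies the same conditions as $K$), the Jacobian $D\psi_\tau^{\xi_k} = I + \tau\,D\textbf{G}^{\xi_k}$ obeys $\|D\psi_\tau^{\xi_k} - I\|_{\mathrm{op}} \le \tau(r+L)$. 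The hypothesis $\tau < \tfrac{1}{2(r+L)}$ forces $\tau(r+L) < \tfrac12$, so $D\psi_\tau^{\xi_k}$ is uniformly invertible with $\|(D\psi_\tau^{\xi_k})^{-1}\|_{\mathrm{op}} \le (1-\tau(r+L))^{-1}$, and $\psi_\tau^{\xi_k}$ is a global diffeomorphism.

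Second, I would write the score of the push-forward explicitly. With $\mathbf{y} = \psi_\tau^{\xi_k}(\mathbf{x})$ and the Jacobian-change-of-variables formula $p_k(\mathbf{y}) = \bar{\varrho}_{T_k}^{N,\boldsymbol{\xi}}(\mathbf{x})\,|\det D\psi_\tau^{\xi_k}(\mathbf{x})|^{-1}$, differentiating the logarithm gives
\[
\nabla_{\mathbf{y}}\log p_k(\mathbf{y}) = \big(D\psi_\tau^{\xi_k}(\mathbf{x})\big)^{-T}\Big(\nabla_{\mathbf{x}}\log\bar{\varrho}_{T_k}^{N,\boldsymbol{\xi}}(\mathbf{x}) - \nabla_{\mathbf{x}}\log\big|\det D\psi_\tau^{\xi_k}(\mathbf{x})\big|\Big),
\]
so that, writing $\mathbf{s} := \nabla\log\bar{\varrho}_{T_k}^{N,\boldsymbol{\xi}}$ and $\mathbf{h} := \nabla\log|\det D\psi_\tau^{\xi_k}|$,
\[
\mathcal{I}(p_k) = \mathbb{E}_{\mathbf{x}\sim\bar{\varrho}_{T_k}^{N,\boldsymbol{\xi}}}\Big[\big|(D\psi_\tau^{\xi_k})^{-T}(\mathbf{s}-\mathbf{h})\big|^2\Big] \le \frac{1}{(1-\tau(r+L))^2}\,\mathbb{E}\big|\mathbf{s}-\mathbf{h}\big|^2.
\]
Applying Young's inequality $|\mathbf{s}-\mathbf{h}|^2 \le (1+\tau(r+L))|\mathbf{s}|^2 + \big(1+\tfrac{1}{\tau(r+L)}\big)|\mathbf{h}|^2$ separates the $\mathcal{I}(\bar{\varrho}_{T_k}^{N,\boldsymbol{\xi}}) = \mathbb{E}|\mathbf{s}|^2$ contribution from the log-determinant correction; tracking the operator-norm factor and this split is exactly what produces the prefactor $\tfrac{1+\tau(r+L)}{1-\tau(r+L)}$ and the inner factor $\tfrac{1}{1-\tau(r+L)}$ in the stated bound.

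Third, I would estimate the correction term $\mathbf{h}$. By Jacobi's formula its $l$-th component is $\mathbf{h}_l = \tau\,\mathrm{tr}\big[(D\psi_\tau^{\xi_k})^{-1}\,\partial_{x_l}D\textbf{G}^{\xi_k}\big]$, which is linear in the Hessians $\nabla^2 b$ and $\nabla^2 K^{\xi_k}$. Using the uniform conditioning $\|(D\psi_\tau^{\xi_k})^{-1}\|_{\mathrm{op}}\le 2$ from Step 1, the polynomial Hessian growth (Assumption \ref{assume}(c)), the sparsity coming from the batch structure (each $\textbf{G}^{\xi_k}_i$ depends only on particles in the batch containing $i$, with the $\tfrac{1}{p-1}$ normalization keeping per-particle contributions $O(1)$), and the uniform-in-time, uniform-in-$\boldsymbol{\xi}$ moment bound of Lemma \ref{moment}, each $\mathbb{E}|\mathbf{h}_l|^2$ is $O(\tau^2)$ with a constant independent of $N$. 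Summing over the $Nd$ coordinates yields $\mathbb{E}|\mathbf{h}|^2 \le M(r+L)^2 N\tau^2$ for a constant $M$ independent of $N$; combined with the $\big(1+\tfrac{1}{\tau(r+L)}\big)$ factor from Young's inequality this gives the advertised $\tfrac{M(r+L)N\tau}{1-\tau(r+L)}$ correction.

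The main obstacle is precisely the control of the log-determinant correction $\mathbf{h}$: one must show $\mathbb{E}|\mathbf{h}|^2 = O(N\tau^2)$ with a constant uniform in both $N$ and the batch sequence $\boldsymbol{\xi}$. This is the step where the structural assumptions do real work — the polynomial growth of the Hessians alone is not integrable without the uniform moment control of Lemma \ref{moment}, and the $N$-uniformity of the constant hinges on keeping the $\tfrac{1}{p-1}$-normalized interaction contributions at the per-particle scale so that summing over the $Nd$ coordinates produces a clean linear-in-$N$ factor rather than a spurious $N^2$. Everything else (invertibility, the score formula, and the Young splitting) is routine once these uniform bounds are in place.
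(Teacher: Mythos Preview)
Your proposal is essentially the paper's own argument: realize $p_k$ as the push-forward of $\bar{\varrho}_{T_k}^{N,\boldsymbol{\xi}}$ under $\psi_\tau^{\xi_k}$, use the change-of-variables score identity together with the operator-norm bound $\|(D\psi_\tau^{\xi_k})^{-1}\|\le(1-\tau(r+L))^{-1}$, split via Young's inequality with parameter $\lambda=\tau(r+L)$, and then control the log-determinant correction through the polynomial Hessian growth of $b,K$ combined with the moment bound of Lemma~\ref{moment}. The only cosmetic differences are that the paper writes the correction as $|\nabla\!\cdot(\nabla\textbf{b}+\nabla\textbf{K}^{\xi_k})|^2$ rather than via Jacobi's formula, and that neither your constant tracking nor the paper's quite reproduces the exact prefactor in the stated inequality (your Step~2 gives $\tfrac{1+\tau(r+L)}{(1-\tau(r+L))^2}$, not $\tfrac{1+\tau(r+L)}{1-\tau(r+L)}$), but this is harmless for the use of the lemma.
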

\begin{proof}
Consider the norm of $\textbf{x}$ defined as $\|\textbf{x}\|^2 = \sum_i^n \|x_i\|^2_2$. Then, Assumption \ref{assume} implies that for $\tau < \frac{1}{r+L}$, the mapping $\psi_{\tau}^{\xi_k}(\textbf{x}) := \textbf{x} + \tau \left(\textbf{b}(\textbf{x})+\textbf{K}^{\xi_k}(\textbf{x})\right)$ is a bi-Lipschitz mapping, i.e.
    \begin{equation*}
        (1-\tau(r+L))\|\textbf{x} - \textbf{y}\| \leq \|\psi_{\tau}^{\xi_k}(\textbf{x})-\psi_{\tau}^{\xi_k}(\textbf{y}) \| \leq (1+\tau(r+L)) \|\textbf{x} - \textbf{y}\|,
    \end{equation*}
from which one deduces that
\begin{equation}\label{suppsi}
    \|\nabla \psi_{\tau}^{\xi_k}(\textbf{x})\| \leq (1+\tau(r+L)), \quad \|\nabla \psi_{\tau}^{\xi_k}(\textbf{x})^{-1}\|\le (1-\tau(r+L))^{-1}.
\end{equation}
By the change of variable formula, we have
$$
p_k(\textbf{z}) = \frac{\bar{\varrho}_{T_k}^{N, \boldsymbol{\xi}}(\textbf{x})}{\det (\nabla \psi_{\tau}^{\xi_k}(\textbf{x}))}.
$$
Consequently, we have the bound on the Fisher information:
$$
\begin{aligned}
     \mathcal{I}(p_k) = & \int_{\mathbb{R}^{Nd}} p_k(\textbf{z}) \left|\nabla_{\textbf{z}}\log p_k(\textbf{z})\right|^2 d\textbf{z} \\
     = & \int_{\mathbb{R}^{Nd}} \bar{\varrho}_{T_k}^{N, \boldsymbol{\xi}}(\textbf{x}) \left| \nabla \psi_{\tau}^{\xi_k}(\textbf{x})^{-1} \left(\nabla_{\textbf{x}}\log \bar{\varrho}_{T_k}^{N, \boldsymbol{\xi}}(\textbf{x})-\nabla_{\textbf{x}}\log\det \left(\nabla \psi_{\tau}^{\xi_k}(\textbf{x}) \right) \right)\right|^2 d\textbf{x}\\
     \leq &  (1+\tau (r+L))\int_{\mathbb{R}^{Nd}} \bar{\varrho}_{T_k}^{N, \boldsymbol{\xi}}(\textbf{x}) \left|\nabla \psi_{\tau}^{\xi_k}(\textbf{x})^{-1} \nabla_{\textbf{x}}\log \bar{\varrho}_{T_k}^{N, \boldsymbol{\xi}}(\textbf{x})\right|^2 d\textbf{x} \\
     & +  \left(1+\frac{1}{\tau (r+L)}\right)\int_{\mathbb{R}^{Nd}} \bar{\varrho}_{T_k}^{N, \boldsymbol{\xi}}(\textbf{x}) \left|\nabla \psi_{\tau}^{\xi_k}(\textbf{x})^{-1} \nabla_{\textbf{x}}\log\det \left(\nabla \psi_{\tau}^{\xi_k}(\textbf{x}) \right) \right|^2 d\textbf{x}=:I_1+I_2.
\end{aligned}
$$
The inequality follows from the simple fact $(a+b)^2\le (1+\lambda) a^2 + (1+\frac{1}{\lambda}) b^2$. It is clear that
\[
I_1\le \frac{1+\tau(r+L)}{1-\tau(r+L)}
\int_{\mathbb{R}^{Nd}} \bar{\varrho}_{T_k}^{N, \boldsymbol{\xi}}(\textbf{x}) \left|  \nabla\log \bar{\varrho}_{T_k}^{N, \boldsymbol{\xi}}(\textbf{x})\right|^2 d\textbf{x}
=\frac{1+\tau(r+L)}{1-\tau(r+L)}
\mathcal{I}(\bar{\varrho}_{T_k}^{N, \boldsymbol{\xi}}).
\]
The second term can be estimated by
\[
\begin{aligned}
I_2 \leq &  \frac{2}{\tau (r+L)}\int_{\mathbb{R}^{Nd}}\bar{\varrho}_{T_k}^{N, \boldsymbol{\xi}}(\textbf{x})  \left|\nabla \psi_{\tau}^{\xi_k}(\textbf{x})^{-2} \nabla \cdot  \left(I_{Nd} + \tau\nabla \textbf{b}(\textbf{x}) + \tau\nabla\textbf{K}^{\xi_k}(\textbf{x}) \right) \right|^2 d\textbf{x}\\
     \leq &  
     \frac{(1+\tau(r+L))\tau}{(1-\tau(r+L))^2(r+L)}\int_{\mathbb{R}^{Nd}} \bar{\varrho}_{T_k}^{N, \boldsymbol{\xi}}(\textbf{x}) \left| \nabla \cdot  \left( \nabla \textbf{b}(\textbf{x}) + \nabla \textbf{K}^{\xi_k}(\textbf{x}) \right) \right|^2 d\textbf{x}\\
     \leq &  \frac{(1+\tau(r+L))N\tau}{(1-\tau(r+L))^2}\frac{M}{r+L},
\end{aligned}
\]
where the last inequality above is due to the polynomial growth of the Hessians and the boundness of the moments, and $M$ is a constant independent of $N$.
\end{proof}

Let $q_{\tau}$ denote the $Nd$-dimensional Gaussian distribution $\mathcal{N}(0,2\sigma\tau \textbf{I}_{Nd})$. Clearly we have the identity $\mathcal{I}(q_\tau) = \frac{Nd}{2\sigma\tau}$. By the d-RBM  (\ref{Nrbm}), we have that $ \bar{\varrho}_{T_{k+1}}^{N, \boldsymbol{\xi}}= p_k(\textbf{x})*q_{\tau}$,  Invoking the convolution inequality (\ref{fisherineq}), for $\tau < \frac{1}{2(r+L)}$ , we have the bound
\begin{equation}\label{recursion}
    \frac{1}{\mathcal{I}\left(\bar{\varrho}_{T_{k+1}}^{N, \boldsymbol{\xi}}\right)} \geq \frac{1}{\mathcal{I}\left(p_k\right)}+\frac{1}{\mathcal{I}\left(q_\tau\right)} \geq \frac{(1-\tau(r+L))^2}{1+\tau(r+L)}\frac{1}{\max(\mathcal{I}\left(\bar{\varrho}_{T_{k}}^{N, \boldsymbol{\xi}}\right), M N)}+\frac{2\sigma\tau}{Nd}.
\end{equation}

Let $u_k= \frac{1}{\mathcal{I}\left(\bar{\varrho}_{T_{k}}^{N, \boldsymbol{\xi}}\right)}$. Then, one has a relation like
\[
u_{k+1}\ge \min(\gamma u_k+\delta, \gamma B+\delta) 
\]
where $\gamma=(1-\tau(r+L))^2/(1+\tau(r+L))$, $B=\frac{1}{MN}$ and $\delta=\frac{2\sigma\tau}{Nd}$. For this recursion, one easily obtains
\[
u_k\ge \min\left(u_0, \gamma B+\delta, \frac{\delta}{1-\gamma}\right),
\]
because $u<\gamma u+\delta$ for $u<\delta/(1-\gamma)$ and $u>\gamma u+\delta$  for $u>\delta/(1-\gamma)$.

Consequently, one can obtain the following control for the Fisher information. The upper bound clearly scales linearly with $N$.
\begin{lemma}\label{fishlemma}
    Under Assumption \ref{assume}, we have the following bound of the Fisher information independent of the batch $\boldsymbol{\xi}=(\xi_0, \cdots, \xi_k, \cdots)$:
\begin{equation}
\mathcal{I}\left(\bar{\varrho}_{T_{k}}^{N, \boldsymbol{\xi}}\right) \leq \max\left(\mathcal{I}\left(\rho^N_{0}\right), \frac{1+\tau(r+L)}{(1-\tau(r+L))^2}MN ,\frac{Nd(r+L)(3+\tau(r+L))}{2\sigma}
\right).
\end{equation}
\end{lemma}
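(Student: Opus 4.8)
The plan is to treat the statement as the endpoint of the scalar recursion already established in the display preceding the lemma, namely
$$u_{k+1}\ge \min(\gamma u_k+\delta,\ \gamma B+\delta),$$
where $u_k=1/\mathcal{I}(\bar{\varrho}_{T_k}^{N,\boldsymbol{\xi}})$, $\gamma=(1-\tau(r+L))^2/(1+\tau(r+L))$, $B=1/(MN)$ and $\delta=2\sigma\tau/(Nd)$. Since everything has been reduced to a one-dimensional iteration, the first thing I would record is that the standing hypothesis $\tau<\frac{1}{2(r+L)}$ guarantees $0<\gamma<1$: writing $a:=\tau(r+L)$, the inequality $(1-a)^2<1+a$ is equivalent to $a^2<3a$, which holds for $0<a<3$ and in particular for $a<\tfrac12$. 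Hence the affine map $u\mapsto\gamma u+\delta$ is a contraction with unique positive fixed point $u^*=\delta/(1-\gamma)$; it strictly increases values below $u^*$ and strictly decreases values above $u^*$, a fact I will use to sustain an induction.

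The core of the argument is then a short induction proving the uniform lower bound
$$u_k\ \ge\ m:=\min\Big(u_0,\ \gamma B+\delta,\ \tfrac{\delta}{1-\gamma}\Big)\qquad\text{for all }k.$$
The base case $u_0\ge m$ is immediate. For the inductive step I would assume $u_k\ge m$ and treat the two branches of the minimum separately: the constant branch satisfies $\gamma B+\delta\ge m$ by the very definition of $m$, while for the affine branch monotonicity gives $\gamma u_k+\delta\ge\gamma m+\delta$, and the inequality $m\le\delta/(1-\gamma)$ rearranges exactly to $\gamma m+\delta\ge m$. Therefore $u_{k+1}\ge\min(\gamma u_k+\delta,\gamma B+\delta)\ge m$, closing the induction.

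Finally I would invert the bound and dominate the three resulting terms. From $u_k\ge m$ we obtain $\mathcal{I}(\bar{\varrho}_{T_k}^{N,\boldsymbol{\xi}})=1/u_k\le1/m=\max\big(1/u_0,\ 1/(\gamma B+\delta),\ (1-\gamma)/\delta\big)$. Here $1/u_0=\mathcal{I}(\bar{\varrho}_{0}^{N,\boldsymbol{\xi}})=\mathcal{I}(\rho_0^N)$, since before any batch step the particles are i.i.d.\ from $\rho_0$ and the law is the tensorized $\rho_0^{\otimes N}$; dropping the positive $\delta$ gives $1/(\gamma B+\delta)\le1/(\gamma B)=MN/\gamma=\frac{1+\tau(r+L)}{(1-\tau(r+L))^2}MN$; and a direct computation using $1-\gamma=\frac{a(3-a)}{1+a}$ yields $(1-\gamma)/\delta=\frac{Nd(r+L)}{2\sigma}\cdot\frac{3-\tau(r+L)}{1+\tau(r+L)}$, which is at most $\frac{Nd(r+L)(3+\tau(r+L))}{2\sigma}$ because $\frac{3-a}{1+a}\le 3+a$ is equivalent to $0\le 5a+a^2$. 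Taking the maximum of the three bounds reproduces the claimed estimate.

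As for difficulty, there is essentially no analytic obstacle remaining: all the substantive content—the Fisher-information contraction of the deterministic step $\psi_\tau^{\xi_k}$ together with Stam's convolution inequality for the Gaussian step—has already been used to produce the recursion. The only points requiring genuine care are purely algebraic: verifying $0<\gamma<1$ so that the fixed point is meaningful, choosing the three-term lower bound $m$ so that the induction is self-sustaining (the condition $m\le\delta/(1-\gamma)$ being exactly what makes the affine branch non-decreasing at $m$), and checking the elementary inequality $\frac{3-a}{1+a}\le 3+a$ that converts $(1-\gamma)/\delta$ into the stated form. I would also emphasize that every constant entering the bound is independent of the batch sequence $\boldsymbol{\xi}$, since $\gamma$, $\delta$, $B$ and the constant $M$ from the previous lemma all are.
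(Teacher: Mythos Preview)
Your proposal is correct and follows exactly the route the paper takes: deduce the scalar recursion $u_{k+1}\ge\min(\gamma u_k+\delta,\gamma B+\delta)$ from the Fisher-information recursion, establish the uniform lower bound $u_k\ge\min(u_0,\gamma B+\delta,\delta/(1-\gamma))$, and invert. You have in fact supplied more detail than the paper does---the explicit check that $0<\gamma<1$, the induction step, and the algebra converting $(1-\gamma)/\delta$ into the stated constant---where the paper simply asserts the lower bound on $u_k$ and states the lemma.
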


\section{The main results}\label{mainthm}

Equipped with the preparation work before, we now establish an $O(\tau^2)$ error estimate for RBM in terms of the scaled relative entropy. Firstly, we notice that the d-RBM (\ref{Nrbm}) at discrete time points is a time homogeneous Markov chain, and $\bar{\rho}_{T_k}^N$ is the law at $T_k$. Recall that $\bar{\varrho}_t^{N,\boldsymbol{\xi}}$ is the probability density of the c-RBM (\ref{disrbm}) for a given sequence of batches $\boldsymbol{\xi}:=\left(\xi_0, \xi_1, \cdots, \xi_k, \cdots\right)$, so that $\bar{\rho}_{T_k}^N=\mathbb{E}_{\boldsymbol{\xi}}\left[\bar{\varrho}_{T_k}^{N, \boldsymbol{\xi}}\right]$. Moreover, by the Markov property, we are able to define
\begin{equation}\label{markovp}
\bar{\rho}_t^{N, \xi_k}:=\mathbb{E}\left[\bar{\varrho}_t^{N, \boldsymbol{\xi}} \mid \xi_i, i \geq k\right]=\mathcal{S}_{T_k, t}^{N, \xi_k} \bar{\rho}_{T_k}^N, \quad t \in\left[T_k, T_{k+1}\right),
\end{equation}
where the operator $\mathcal{S}_{T_k, t}^{N, \xi_k}$ is the evolution operator from $T_k$ to $t$ for the Liouville equation of the c-RBM (\ref{disrbm}) derived in Lemma \ref{Lioulemma}, for some given $\xi_k$ :
\begin{equation}\label{proofliou}
    \partial_t \bar{\rho}^{N, \xi_k}_t+\sum_{i=1}^N \operatorname{div}_{x_i}\left(\bar{\rho}^{N, \xi_k}_t\left(\bar{b}_t^{\xi_k, i}\left(\textbf{x}\right)+ \bar{K}^{\xi_k, i}_t\left(\textbf{x}\right)\right)\right)=\sum_{i=1}^N \sigma \Delta_{x_i} \bar{\rho}^{N, \xi_k}_t, \quad \bar{\rho}^{N, \xi_k}_{T_k} = \bar{\rho}^{N}_{T_k},
\end{equation}
where
$$
\bar{b}_t^{\xi_k,i}\left(\textbf{x}\right):=\mathbb{E}\left[b\left(\bar{X}_{T_k}^i\right) \mid \bar{\textbf{X}}_t=\textbf{x}, \xi_k\right], \quad t \in\left[T_k, T_{k+1}\right),
$$
and
$$
\bar{K}_t^{\xi_k,i}\left(\textbf{x}\right):=\mathbb{E}\left[K^{\xi_k}\left(\bar{X}_{T_k}^i\right) \mid \bar{\textbf{X}}_t=\textbf{x}, \xi_k\right], \quad t \in\left[T_k, T_{k+1}\right).
$$
Here, $\bar{b}_t^{\xi_k,i}\left(\textbf{x}\right)$ and $\bar{K}_t^{\xi_k,i}\left(\textbf{x}\right)$,  the expectation conditional on the $k$-th batch $\xi_k$,  are a little bit different from (\ref{lioub}) and (\ref{liouK}) which are the expectation conditional on all fixed batch $\boldsymbol{\xi}=(\xi_0, \cdots, \xi_k, \cdots)$.

Since for $t\in \left[ T_k,T_{k+1}\right)$,
$$\bar{\rho}_{t}^N=\mathbb{E}_{\xi_k}\left[\bar{\rho}_{t}^{N,\xi_k}\right],$$
then by (\ref{proofliou}) one gets
\begin{equation}\label{noxiliou}
    \partial_t \bar{\rho}^{N}_t = -\sum_{i=1}^N \mathbb{E}_{\xi_k}\left[\operatorname{div}_{x_i}\left(\bar{\rho}^{N, \xi_k}_t\left(\bar{b}_t^{\xi_k,i}\left(\textbf{x}\right)+ \bar{K}^{\xi_k,i}_t\left(\textbf{x}\right)\right)\right)+ \sigma \Delta_{x_i} \bar{\rho}^{N, \xi_k}_t\right].
\end{equation}

\begin{theorem}\label{thm}
Consider the joint law $\bar{\rho}^N_t$ for $N$ particle $\bar{\rho}_{t}^{N}$ defined in (\ref{disrbm}) and the tensorized law $\rho^N_t$ of the Fokker-Plank equation (\ref{FPE}). Suppose $\sigma >0 $, then under Assumptions \ref{assume} and \ref{LSI}, there exist positive constants $c_1$, $c_2$ and $\Delta$ independent of $N$ and $t$ but dependent on $\sigma$, such that for all $\tau \in (0, \Delta)$, 
    \begin{equation}
        \sup_{t} \mathcal{H}_N\left( \bar{\rho}^N_t \mid \rho^N_t \right) \leq \mathcal{H}_N\left( \bar{\rho}^N_0 \mid \rho^N_0 \right)+c_1 \tau^2 + \frac{c_2}{N}.
    \end{equation}
\end{theorem}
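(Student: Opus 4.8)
The plan is to control the time derivative of the rescaled relative entropy $\mathcal{H}_N(\bar\rho^N_t\mid\rho^N_t)$ on each subinterval $[T_k,T_{k+1})$ and to close a Gr\"onwall inequality whose dissipation comes from the common diffusion together with the log-Sobolev inequality. Writing $\mu=\bar\rho^N_t$, $\nu=\rho^N_t$ and $H(t):=\mathcal{H}_N(\mu\mid\nu)$, conservation of mass gives
\[
\frac{d}{dt}H=\frac1N\int \partial_t\mu\,\log\frac{\mu}{\nu}\,d\mathbf{x}-\frac1N\int\frac{\mu}{\nu}\,\partial_t\nu\,d\mathbf{x}.
\]
I would substitute the averaged Liouville equation (\ref{noxiliou}) for $\mu$ and the tensorized Fokker--Planck equation (\ref{tensorFK}) for $\nu$, and integrate by parts. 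Since both carry $\sigma\sum_i\Delta_{x_i}$, the second-order terms assemble into the dissipation $-\frac{\sigma}{N}\int\mu\,|\nabla\log(\mu/\nu)|^2\,d\mathbf{x}$, minus the unnormalized relative Fisher information. Using $\mathbb{E}_{\xi_k}[\bar\varrho^{N,\xi_k}_t]=\mu$, the first-order terms collapse (the factor multiplying $V_i$ averages to $\mu$) to
\[
\frac1N\sum_{i=1}^N\int \mathbb{E}_{\xi_k}\!\big[\bar\varrho^{N,\xi_k}_t\,(U^{\xi_k}_i-V_i)\big]\cdot\nabla_{x_i}\log\frac{\mu}{\nu}\,d\mathbf{x},
\]
where $U^{\xi_k}_i=\bar b^{\xi_k,i}_t+\bar K^{\xi_k,i}_t$ is the conditional c-RBM drift and $V_i=b(x_i)+K*\rho_t(x_i)$ is the mean-field drift.

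Second, I would telescope the drift difference as $U^{\xi_k}_i-V_i=\mathbb{E}[b(\bar X^i_{T_k})-b(x_i)\mid\cdots]+\mathbb{E}[K^{\xi_k}(\bar X^i_{T_k})-K^{\xi_k}(x_i)\mid\cdots]+(K^{\xi_k}(x_i)-F(x_i))+(F(x_i)-K*\rho_t(x_i))$. For the two time-freezing errors I would Taylor-expand around $x_i$; the increment $\bar X^i_{T_k}-x_i$ splits into an $O(\tau)$ drift part and an $O(\sqrt\tau)$ Brownian part, and the latter is exactly what must be removed to reach $\tau^2$. Conditioning on $\bar{\mathbf X}_t$ keeps the Brownian increment correlated with the state, so I would apply Gaussian integration by parts (equivalently a Girsanov change of measure) to trade that factor for a spatial derivative landing on $\bar\varrho^{N,\xi_k}_t$. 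This generates score terms whose $L^2(\bar\varrho^{N,\xi_k}_t)$ norm is $\mathcal{I}(\bar\varrho^{N,\xi_k}_t)\le CN$ by Lemma \ref{fishlemma}, while the Taylor remainders involve the polynomially growing Hessians of Assumption \ref{assume}(c), controlled by the uniform moments of Lemma \ref{moment}. Each freezing error is then $O(\tau)$ in $L^2(\mu)$, and a Young inequality $a\cdot\nabla\log(\mu/\nu)\le\frac{\sigma}{8}|\nabla\log(\mu/\nu)|^2+\frac{2}{\sigma}|a|^2$ absorbs the gradient into a fraction of the dissipation while leaving an $O(\tau^2)$ remainder (the $\frac1N\sum_i$ of $N$ terms each $O(\tau^2)$).

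Third, I would treat the two fluctuation pieces. For the batch-sampling term, $\mathbb{E}_{\xi_k}[K^{\xi_k}(x_i)]=F(x_i)$ up to $O(1/N)$, so after weighting by $\bar\varrho^{N,\xi_k}_t$ and averaging, only the covariance $\mathrm{Cov}_{\xi_k}(\bar\varrho^{N,\xi_k}_t,K^{\xi_k}(x_i))$ survives; since $\bar\varrho^{N,\xi_k}_{T_k}=\mu_{T_k}$ is independent of $\xi_k$, this covariance is of order $\tau$ on $[T_k,T_{k+1})$ and is controlled using the uniform bound of Assumption \ref{assume}(d). For the mean-field term I would pair $F(x_i)-K*\rho_t(x_i)$ with $\nabla_{x_i}\log(\mu/\nu)$, apply Young, and estimate $\frac1N\int\mu\sum_i|F(x_i)-K*\rho_t(x_i)|^2\,d\mathbf{x}$ by the law of large numbers at the exponential scale: the Gibbs (change-of-measure) inequality bounds it by $c\,H(t)+C/N$ with $c$ tunable, the exponential moment under the product measure $\rho^N$ being finite by a Jabin--Wang cumulant estimate together with the moment control and the tensorization of the log-Sobolev inequality (Assumption \ref{LSI}).

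Finally, collecting the estimates gives, on each $[T_k,T_{k+1})$,
\[
\frac{d}{dt}H\le-\frac{\sigma}{N}\int\mu\,\big|\nabla\log\tfrac{\mu}{\nu}\big|^2\,d\mathbf{x}+\frac{\sigma}{2N}\int\mu\,\big|\nabla\log\tfrac{\mu}{\nu}\big|^2\,d\mathbf{x}+c\,H+C\big(\tau^2+\tfrac1N\big).
\]
Setting $f=\mu/\nu$ in the log-Sobolev inequality tensorized to $\rho^N$ with the same constant $C_{LS}$ yields $\frac1N\int\mu|\nabla\log(\mu/\nu)|^2\,d\mathbf{x}\ge\frac{1}{C_{LS}}H$, so the surviving half-dissipation dominates and $\frac{d}{dt}H\le-\lambda H+C(\tau^2+1/N)$ with $\lambda=\frac{\sigma}{2C_{LS}}-c>0$ once the Gibbs parameter is tuned so that $c<\frac{\sigma}{2C_{LS}}$ and $\tau$ is small. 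Since $t\mapsto H(t)$ is continuous across the grid points $T_k$ (the laws stay continuous even though the generator jumps), I would patch the intervals and apply Gr\"onwall to obtain $\sup_t H(t)\le H(0)+\frac{C}{\lambda}(\tau^2+1/N)$, the claimed bound. The main obstacle is the $O(\sqrt\tau)\to O(\tau)$ upgrade of the freezing errors via Gaussian integration by parts: it must be performed while the drift is coupled to the batch-dependent law $\bar\varrho^{N,\xi_k}_t$, and it survives the final division by $N$ only because of the linear-in-$N$ Fisher information bound of Lemma \ref{fishlemma}.
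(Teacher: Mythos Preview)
Your overall architecture matches the paper's: differentiate the relative entropy, extract the common diffusion as a relative Fisher dissipation, telescope the drift discrepancy into four pieces (two time-freezing errors, the batch-sampling fluctuation $K^{\xi_k}-F$, and the mean-field fluctuation $F-K*\rho_t$), use Young to absorb gradients, and close via the tensorized log-Sobolev inequality and Gr\"onwall. Your treatment of the freezing errors (Taylor expansion plus Gaussian integration by parts/Bayes, yielding a Fisher information term controlled by Lemma~\ref{fishlemma}) and of the mean-field fluctuation (Gibbs variational inequality plus the Jabin--Wang exponential-scale law of large numbers) is essentially the paper's Steps~2, 3 and~5. One minor slip: the integration by parts in the freezing step lands the derivative on $\bar\rho^N_{T_k}$ (the law at the grid point, via Bayes), not on $\bar\varrho^{N,\xi_k}_t$; correspondingly the Fisher information that appears is $\mathcal{I}(\bar\rho^N_{T_k})$, which is then bounded through convexity by $\mathbb{E}_{\boldsymbol\xi}[\mathcal{I}(\bar\varrho^{N,\boldsymbol\xi}_{T_k})]$ and Lemma~\ref{fishlemma}.

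There is, however, a genuine gap in your handling of the batch-sampling term $K^{\xi_k}(x_i)-F(x_i)$. You write that the relevant covariance ``is of order $\tau$ on $[T_k,T_{k+1})$ and is controlled using the uniform bound of Assumption~\ref{assume}(d)'', but you do not say in which norm, nor by what mechanism. After Young's inequality, what must be bounded is a $\chi^2$-type quantity of the form
\[
\int \frac{\bigl|\mathbb{E}_{\xi_k}\bigl[(K^{\xi_k}(x_i)-F(x_i))(\bar\rho^{N,\xi_k}_t-\bar\rho^N_t)\bigr]\bigr|^2}{\bar\rho^N_t}\,d\mathbf{x},
\]
and Assumption~\ref{assume}(d) only disposes of the $|K^{\xi_k}-F|$ factor. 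The density fluctuation $\bar\rho^{N,\xi_k}_t-\bar\rho^N_t$ vanishes at $T_k$, but a naive time-derivative or $L^1$ argument will not produce the needed $O(\tau^2 N)$ bound in this weighted $L^2$ norm. This is precisely the most delicate step of the paper (Step~4 together with Lemmas~\ref{remainK1}--\ref{remainK3}): one introduces an \emph{independent copy} $\tilde\xi_k$ so the weight can be taken to be $\bar\rho^{N,\tilde\xi_k}_t$, then applies the \emph{Girsanov transform on path space} (Lemma~\ref{path}) to write $\bar\rho^{N,\xi_k}_t/\bar\rho^{N,\tilde\xi_k}_t$ as a conditional exponential martingale, and finally splits $e^{\mathbf z}-1$ into linear, affine, and remainder pieces $K_1,K_2,K_3$, with the $K_1$ piece again requiring integration by parts and the Fisher-information bound. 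Without this machinery your ``of order $\tau$'' claim is unproved, and the step does not close at the required $\tau^2$ rate.
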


It is well-known that one can bound the Wasserstein-1 distance with square root
of the relative entropy by the Csisz{\'a}r-Kullback-Pinsker inequality, see for instance \cite{villani2009optimal}.

\begin{corollary}
    Under the same assumptions as in Theorem \ref{mainthm}, the following hold:
    \begin{equation}
    W_1\left(\bar{\rho}_{t}^{N}, \rho_{t}^{N}\right) \leq C_1 \tau + \frac{C_2}{\sqrt{N}}.
    \end{equation}
    Here the constants $C_1, C_2$ are independent of $N$ and $t$.
\end{corollary}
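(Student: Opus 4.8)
The plan is to deduce the Wasserstein bound directly from the rescaled relative-entropy estimate of Theorem \ref{thm} via a transportation--information inequality, exploiting the log-Sobolev hypothesis (\ref{LSI}) that is already in force. First I would observe that the initial positions $X^i_0$ are i.i.d.\ with law $\rho_0$, so that $\bar{\rho}_0^N=\rho_0^{\otimes N}=\rho_0^N$; hence the initial relative entropy vanishes and Theorem \ref{thm} gives the uniform-in-time bound $\mathcal{H}_N(\bar{\rho}_t^N\mid\rho_t^N)\le c_1\tau^2+c_2/N$. (If one prefers not to assume matched initial data, the same conclusion holds up to the additive term $\mathcal{H}_N(\bar{\rho}_0^N\mid\rho_0^N)$, which I would simply carry along.)

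Next I would upgrade the log-Sobolev inequality to a quadratic transportation inequality. By the tensorization property recorded after Assumption \ref{LSI}, the product measure $\rho_t^N=\rho_t^{\otimes N}$ satisfies the LSI on $\mathbb{R}^{Nd}$, with respect to the full Euclidean gradient, with the \emph{same} constant $C_{LS}$, independent of $N$. By the Otto--Villani theorem, a log-Sobolev inequality implies the Talagrand $T_2$ transportation inequality with a constant depending only on $C_{LS}$, so for every $t$
$$
W_2(\bar{\rho}_t^N,\rho_t^N)^2 \le C_T\, H(\bar{\rho}_t^N\mid\rho_t^N) = C_T\, N\,\mathcal{H}_N(\bar{\rho}_t^N\mid\rho_t^N),
$$
where $H$ is the unscaled relative entropy, $W_2$ is taken for the Euclidean metric on $\mathbb{R}^{Nd}$, and $C_T$ depends only on $C_{LS}$. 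The factor $N$ appears because $\mathcal{H}_N=H/N$.

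The crucial bookkeeping step is the choice of metric. I would equip $\mathbb{R}^{Nd}$ with the \emph{normalized} per-particle distance $d(\textbf{x},\textbf{y})^2=\tfrac1N\sum_{i=1}^N|x_i-y_i|^2$, which is the natural scale for mean-field comparisons; under this rescaling the transport cost is divided by $N$, so the displayed inequality becomes $W_2(\bar{\rho}_t^N,\rho_t^N)^2\le C_T\,\mathcal{H}_N(\bar{\rho}_t^N\mid\rho_t^N)\le C_T(c_1\tau^2+c_2/N)$. Finally, using $W_1\le W_2$ together with the elementary inequality $\sqrt{a+b}\le\sqrt{a}+\sqrt{b}$ for $a,b\ge 0$, I obtain
$$
W_1(\bar{\rho}_t^N,\rho_t^N)\le \sqrt{C_T c_1}\,\tau + \sqrt{C_T c_2}\,\frac{1}{\sqrt{N}}=:C_1\tau+\frac{C_2}{\sqrt{N}},
$$
with $C_1,C_2$ independent of $N$ and $t$, as claimed.

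The hard part is not the chain of inequalities but getting the scaling right: with the unnormalized Euclidean metric the identical argument yields only $W_1\lesssim\sqrt{N}\,\tau+1$, which is useless, so the clean rate $\tau+1/\sqrt{N}$ hinges on using the per-particle metric \emph{and} on the dimension-free tensorization that keeps the transportation constant independent of $N$. A secondary technical point, which I would verify but not belabor, is the applicability of Otto--Villani: one needs $\rho_t^N$ to be smooth with finite Fisher information and adequate integrability, all of which follow from the regularity of the mean-field density $\rho_t$ together with the moment and Fisher-information controls of Lemma \ref{moment} and Lemma \ref{fishlemma}. I note finally that the Csisz\'ar--Kullback--Pinsker route mentioned before the statement controls the total variation rather than $W_1$ on an unbounded space, so it is the transportation inequality above that actually delivers the stated bound.
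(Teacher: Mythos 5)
Your argument is correct, and it takes a genuinely different route from the paper's. The paper dispatches this corollary in a single sentence preceding the statement: it invokes the Csisz\'ar--Kullback--Pinsker inequality (citing Villani's book) to bound $W_1$ by the square root of the relative entropy, with no discussion of which metric on $\mathbb{R}^{Nd}$ is used or why the constant is uniform in $N$. You instead upgrade the uniform-in-time log-Sobolev hypothesis, via tensorization and the Otto--Villani theorem, to a dimension-free Talagrand $T_2$ inequality for $\rho_t^N$, and then pass to $W_1$ through $W_1\le W_2$ in the normalized per-particle metric. Your route buys two things the paper's one-liner glosses over. First, the classical CKP inequality controls total variation, which on an unbounded space does not dominate $W_1$; to extract $W_1$ from entropy along the CKP line one needs the weighted variants (Bolley--Villani, Theorem 22.10 in Villani's book), whose constants involve exponential moments of the reference measure, and their $N$-uniformity for $\rho_t^{\otimes N}$ would then have to be checked separately --- your $T_2$ route sidesteps this entirely because LSI tensorizes with an $N$-independent constant. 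Second, you make the metric normalization explicit: as you correctly note, with the unnormalized Euclidean metric the entropy bound $H=N\mathcal{H}_N\le c_1N\tau^2+c_2$ only yields $W_1\lesssim\sqrt{N}\,\tau+1$, so the stated rate $C_1\tau+C_2/\sqrt{N}$ genuinely requires the per-particle distance, a point the paper leaves implicit. Two small remarks: your caution about the applicability of Otto--Villani is unnecessary, since the implication LSI $\Rightarrow T_2$ holds for any probability measure satisfying the LSI (by Bobkov--Gentil--Ledoux), with no additional smoothness or Fisher-information hypotheses; and your observation that $\bar{\rho}_0^N=\rho_0^{\otimes N}$ (hence vanishing initial entropy) is consistent with the paper's i.i.d.\ initial data, though carrying the term $\mathcal{H}_N(\bar{\rho}_0^N\mid\rho_0^N)$ along, as you offer to do, matches the form of Theorem \ref{thm} more faithfully.
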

\begin{proof}[Proof of Theorem \ref{mainthm}]
We prove this result in six main steps.\\

\textbf{Step 1:} Estimate time derivative of the relative entropy.\\

By using the equations (\ref{tensorFK}) and (\ref{noxiliou}) for $\rho^N_t$ and $\bar{\rho}_t^N$, respectively, we calculate the time derivative of the relative entropy in the time interval $\left[T_k,T_{k+1}\right)$:
\begin{equation}
\begin{aligned}
& \frac{d}{d t}   \mathcal{H}_N\left( \bar{\rho}^N_t \mid \rho^N_t \right)=\frac{1}{N}\int_{\mathbb{R}^{Nd}}\left(\partial_t \bar{\rho}_t^N\right)\left(\log \frac{\bar{\rho}_t^N}{\rho_t^N}+1\right) d \textbf{x}+\frac{1}{N}\int_{\mathbb{R}^{Nd}}\left(\partial_t \rho_t^N\right)\left(-\frac{\bar{\rho}_t^N}{\rho_t^N}\right) d \textbf{x} \\
= & \frac{1}{N}\sum_{i=1}^N \int_{\mathbb{R}^{Nd}} \left(\mathbb{E}_{\xi_k}\left(\bar{\rho}_t^{N,\xi_k}\left(\bar{b}_t^{\xi_k,i}(\textbf{x})+\bar{K}_t^{\xi_k,i}(\textbf{x})\right)\right)-\sigma \operatorname{div}_{x_i} \bar{\rho}_t^N \right) \cdot\left(\nabla_{x_i} \log \frac{\bar{\rho}_t^N}{\rho_t^N }\right) d\textbf{x}\\
& + \frac{1}{N}\sum_{i=1}^N \int_{\mathbb{R}^{Nd}} \left( \rho_t^N \left(b(x_i)+K * \rho_t(x_i)\right)-\sigma \operatorname{div}_{x_i} \rho_t^N \right) \cdot\left(-\nabla_{x_i} \frac{\bar{\rho}_t^N}{\rho_t^N}\right) d \textbf{x}.
\end{aligned}
\end{equation}
Introduce 
\begin{equation}\label{Fxi}
    F(x_i) = \frac{1}{N-1}\sum_{j:j\neq i}K(x_i-x_j),
\end{equation}
rearrange the terms
\begin{equation}
    \begin{aligned}
        & \frac{d}{d t}   \mathcal{H}_N\left( \bar{\rho}^N_t \mid \rho^N_t \right)=\frac{1}{N}\sum_{i=1}^N \int_{\mathbb{R}^{Nd}}\mathbb{E}_{\xi_k}\left( \rho_t^{N,\xi_k}\left( \bar{b}_t^{\xi_k,i}(\textbf{x})-b(x_i) \right)\right)\cdot \nabla_{x_i} \log \frac{\bar{\rho}_t^N}{\rho_t^N} d\textbf{x}\\
        & + \frac{1}{N}\sum_{i=1}^N \int_{\mathbb{R}^{Nd}} \mathbb{E}_{\xi_k} \left( \bar{\rho}^{N,\xi_k}_t \bar{K}^{\xi_k,i}_t(\textbf{x}) - \bar{\rho}^{N,\xi_k}_t K^{\xi_k}(x_i)\right) \cdot \nabla_{x_i} \log \frac{\bar{\rho}_t^N}{\rho_t^N } d \textbf{x}\\
        & + \frac{1}{N}\sum_{i=1}^N \int_{\mathbb{R}^{Nd}} \mathbb{E}_{\xi_k}\left( \bar{\rho}^{N,\xi_k}_t K^{\xi_k}(x_i)-\bar{\rho}^N_t F(x_i)\right) \cdot \nabla_{x_i} \log \frac{\bar{\rho}_t^N}{\rho_t^N } d \textbf{x}\\
        & + \frac{1}{N}\sum_{i=1}^N \int_{\mathbb{R}^{Nd}} \left( F(x_i)-K*\rho_t(x_i)\right)\bar{\rho}^N_t \cdot \nabla_{x_i} \log \frac{\bar{\rho}_t^N}{\rho_t^N } d \textbf{x}\\
        & -\frac{\sigma}{N} \sum_{i=1}^N \int_{\mathbb{R}^{Nd}} \bar{\rho}^N_t \left|\nabla_{x_i} \log \frac{\bar{\rho}_t^N}{\rho_t^N }\right|^2 d \textbf{x}\\
        & := \frac{1}{N}\sum_{i=1}^N (J_1^i+J_2^i+J_3^i+J_4^i+J_5^i).
    \end{aligned}
\end{equation}
As done in \cite{li2022sharp}, for $J_3^i$, our approach is to introduce another copy of RBM $\bar{\textbf{Y}}$ that depends on another batch $\tilde{\xi_k}$ such that
\begin{itemize}
    \item $\bar{\textbf{Y}}_{T_k}=\bar{\textbf{X}}_{T_k}$;
    \item the Brownian Motion are the same in $\left[T_k, T_{k+1}\right)$;
    \item the batch $\tilde{\xi}_k$ on $\left[T_k, T_{k+1}\right)$ is independent of $\xi_k$.
\end{itemize}
Consequently, density of the law $\bar{\rho}_t^{N,\tilde{\xi}_k}$ for $\bar{\textbf{Y}}$ satisfies both (\ref{markovp}) and (\ref{proofliou}). Then 
\begin{equation}\label{j3}
    \begin{aligned}
        J_3^i  
         &=  \int_{\mathbb{R}^{Nd}} \mathbb{E}_{\xi_k} \left[ \left( K^{\xi_k}(x_i)- F(x_i) \right)\left(\bar{\rho}^{N,\xi_k}_t-\bar{\rho}^{N}_t \right)\right] \cdot \nabla_{x_i} \log \frac{\bar{\rho}_t^N}{\rho_t^N }d \textbf{x}\\
         &=  \int_{\mathbb{R}^{Nd}} \mathbb{E}_{\xi_k,\tilde{\xi}_k} \left[ \left( K^{\xi_k}(x_i)- F(x_i) \right)\left(\bar{\rho}^{N,\xi_k}_t-\bar{\rho}^{N,\tilde{\xi}_k}_t \right)\right]\cdot \nabla_{x_i} \log \frac{\bar{\rho}_t^N}{\rho_t^N }d \textbf{x}\\
         & \leq \frac{2}{\sigma} \mathbb{E}_{\xi_k, \tilde{\xi}_k} \left[ \int_{\mathbb{R}^{Nd}} \left| K^{\xi_k}(x_i)- F(x_i)\right|^2 \frac{\left|\bar{\rho}^{N,\xi_k}_t-\bar{\rho}^{N,\tilde{\xi}_k}_t\right|^2}{\bar{\rho}^{N,\tilde{\xi}_k}_t}d\textbf{x}\right] +\frac{\sigma}{8} \int_{\mathbb{R}^{Nd}} \bar{\rho}^N_t \left|\nabla_{x_i} \log \frac{\bar{\rho}_t^N}{\rho_t^N }\right|^2 d \textbf{x}.
    \end{aligned}
\end{equation}
Here the second equality is due to the fact $$\mathbb{E}_{\xi_k}\left[\bar{\rho}^{N,\xi_k}_t\right] = \bar{\rho}^{N}_t,$$ and the consistency of the random batch, that is $$\mathbb{E}_{\xi_k}\left[K^{\xi_k}(x_i)\right] = F(x_i).$$
Then the term $\left| \bar{\rho}^{N,\xi_k}_t K^{\xi_k}(x_i)-\bar{\rho}^N_t F(x_i)\right|$ is of $O(1)$. The second equality is due to $\mathbb{E}_{\tilde{\xi}_k}\left[\bar{\rho}^{N,\tilde{\xi}_k}_t \right]=\bar{\rho}^{N}_t$. In the last inequality, we applied Young's inequality and the fact $\mathbb{E}_{\xi_k, \tilde{\xi}_k}\left|\nabla_{x_i} \log \frac{\bar{\rho}_t^N}{\rho_t^N}\right|^2 \bar{\rho}_t^{N,\tilde{\xi}_k}=\left|\nabla_{x_i} \log \frac{\bar{\rho}_t^N}{\rho_t^N}\right|^2 \bar{\rho}_t^N$. The introduction of the independent copy of $\tilde{\xi}_k$ is useful since we may apply the Girsanov transform later to estimate this quantitatively.

Then apply Young's inequality for $J_1^i$, $J_2^i$, $J_4^i$ and the Log-Sobolev inequality for $J_3^i$, we have
\begin{equation}
    \begin{aligned}
          \frac{d}{d t}   \mathcal{H}_N\left( \bar{\rho}^N_t \mid \rho^N_t \right) \leq & -\frac{\sigma}{2 C_{LS}} \mathcal{H}_N\left( \bar{\rho}^N_t \mid \rho^N_t \right) + \frac{2}{\sigma N}\sum_{i=1}^N\mathbb{E}_{\xi_k} \int_{\mathbb{R}^{Nd}} \left|\bar{b}^{\xi_k,i}_t(\textbf{x}) - b(x_i)\right|^2 \bar{\rho}^{N,\xi_k}_t d \textbf{x}\\
           & + \frac{2}{\sigma N}\sum_{i=1}^N \mathbb{E}_{\xi_k} \int_{\mathbb{R}^{Nd}} \left| \bar{K}^{\xi_k,i}_t(\textbf{x})-K^{\xi_k}(x_i) \right|^2 \bar{\rho}^{N,\xi_k}_t d \textbf{x} \\
           & + \frac{2}{\sigma N}\sum_{i=1}^N \mathbb{E}_{\xi_k, \tilde{\xi}_k} \left[ \int_{\mathbb{R}^{Nd}} \left| K^{\xi_k}(x_i)- K(x_i)\right|^2 \frac{\left|\bar{\rho}^{N,\xi_k}_t-\bar{\rho}^{N,\tilde{\xi}_k}_t\right|^2}{\bar{\rho}^{N,\tilde{\xi}_k}_t}\right]d\textbf{x}\\
           & + \frac{2}{\sigma N}\sum_{i=1}^N \int_{\mathbb{R}^{Nd}} \bar{\rho}_t^N \left|F(x_i)-K*\rho_t(x_i)\right|^2 d \textbf{x}.
    \end{aligned}
\end{equation}

\textbf{Step 2:} Estimate $\mathbb{E}_{\xi_k}\int_{\mathbb{R}^{Nd}} \left|\bar{b}_t^{\xi_k,i}(\textbf{x}) - b(x_i)\right|^2 \bar{\rho}^{N,\xi_k}_t d \textbf{x} $.\\

For any $ t\in\left[T_k,T_{k+1}\right)$, by Taylor's expansion, we have
\begin{equation*}
    \begin{aligned}
        \bar{b}_t^{\xi_k,i}(\textbf{x}) - b(x_i) =& \mathbb{E} \left[b\left(\bar{X}^i_{T_k}\right)-b\left(\bar{X}^i_{t}\right)\mid\bar{\textbf{X}}_t = \textbf{x}, \xi_k\right]\\
        = & \mathbb{E} \left[\bar{X}^i_{T_k}-\bar{X}^i_{t}\mid\bar{\textbf{X}}_t = \textbf{x}, \xi_k\right]\cdot \nabla_{x_i} b(x_i)+\bar{r}_t(x_i),
    \end{aligned}
\end{equation*}
where the remainder takes the form
\begin{equation}
   \bar{r}_t(x_i):=\frac{1}{2}\mathbb{E}\left[ \left(\bar{X}^i_{T_k}-\bar{X}^i_{t}\right)^{\otimes 2}: \int_0^1 \nabla_{x_i}^2 b\left((1-s) \bar{X}_t^i+s \bar{X}_{T_k}^i\right) d s\mid \bar{\textbf{X}}_t = \textbf{x}, \xi_k\right].
\end{equation}
In Lemma \ref{reminder}, we show that for $t \in\left[T_k, T_{k+1}\right)$,
$$
\mathbb{E}\left[\left|\bar{r}_t\left(\bar{X}_t^i\right)\right|^2 \mid \xi_k \right] \leq c \left(t-T_k\right)^2,
$$
where $c$ is a positive constant independent of $N$ and $\xi_k$. 

For the first term, consider 
$\mathbb{E} \left[\bar{\textbf{X}}_{T_k}-\bar{\textbf{X}}_{t}\mid\bar{\textbf{X}}_t = \textbf{x}, \xi_k\right]$. By Bayes' law,
\begin{equation}\label{step2bayes}
    \begin{aligned}
\mathbb{E} \left[\bar{\textbf{X}}_{T_k}-\bar{\textbf{X}}_{t}\mid\bar{\textbf{X}}_t = \textbf{x}, \xi_k\right] & =\int_{\mathbb{R}^{Nd}}(\textbf{y}-\textbf{x}) P\left(\bar{\textbf{X}}_{T_k}=\textbf{y} \mid \bar{\textbf{X}}_t=\textbf{x},\xi_k\right) d \textbf{y} \\
& =\int_{\mathbb{R}^{Nd}}(\textbf{y}-\textbf{x}) \frac{\bar{\rho}_{T_k}^{N}(\textbf{y}) P\left(\bar{\textbf{X}}_t=\textbf{x} \mid \bar{\textbf{X}}_{T_k}=\textbf{y},\xi_k \right)}{\bar{\rho}_t^{N,\xi_k}(\textbf{x})} d \textbf{y}.
\end{aligned}
\end{equation}
Clearly, the distribution $P\left(\bar{\textbf{X}}_t=\textbf{x} \mid \bar{\textbf{X}}_{T_k}=\textbf{y},\xi_k \right)$ is Gaussian, namely,
$$
P\left(\bar{\textbf{X}}_t=\textbf{x} \mid \bar{\textbf{X}}_{T_k}=\textbf{y},\xi_k \right)=\left(4 \pi \sigma\left(t-T_k\right)\right)^{-\frac{Nd}{2}} \exp \left(-\frac{\left|\textbf{x}-\textbf{y}-\textbf{b}(\textbf{y})\left(t-T_k\right)-\textbf{K}^{\xi_k}(\textbf{y})\left(t-T_k\right)\right|^2}{4 \sigma \left(t-T_k\right)}\right).
$$
Then one can use integration by parts to calculate (\ref{step2bayes}). Indeed, we show in Lemma \ref{remainb} that
\begin{equation}
    \int_{\mathbb{R}^{Nd}} \left|\mathbb{E} \left[\bar{\textbf{X}}_{T_k}-\bar{\textbf{X}}_{t}\mid\bar{\textbf{X}}_t = \textbf{x},\xi_k\right]\right|^2 \bar{\rho}^{N,\xi_k}_t d \textbf{x} \leq c\tau^2\left(3N+\mathcal{I}\left(\bar{\rho}_{T_k}^{N}\right)\right).
\end{equation}
Hence, 
\begin{equation}
\begin{aligned}
    &\frac{1}{N}\sum_{i=1}^N \mathbb{E}_{\xi_k} \int_{\mathbb{R}^{Nd}} \left|\bar{b}_t^{\xi_k,i}(\textbf{x}) - b(x_i)\right|^2 \bar{\rho}^{N,\xi_k}_t d \textbf{x} \\
    \lesssim  & \mathbb{E}_{\xi_k}\left[\frac{1}{N}\sum_{i=1}^N \mathbb{E}\left[\left|\bar{r}_t\left(\bar{X}_t^i\right)\right|^2 \mid \xi_k\right] +  \frac{1}{N}\sum_{i=1}^N \int_{\mathbb{R}^{Nd}}\left|\mathbb{E} \left[\bar{X}^i_{T_k}-\bar{X}^i_{t}\mid\bar{\textbf{X}}_t = \textbf{x},\xi_k\right]\right|^2 \bar{\rho}^{N,\xi_k}_t d\textbf{x} \right]\\
     \leq  &\mathbb{E}_{\xi_k}\left[ \frac{1}{N}\sum_{i=1}^N \mathbb{E}\left[\left|\bar{r}_t\left(\bar{X}_t^i\right)\right|^2\mid \xi_k \right] +  \frac{1}{N} \int_{\mathbb{R}^{Nd}} \left|\mathbb{E} \left[\bar{\textbf{X}}_{T_k}-\bar{\textbf{X}}_{t}\mid\bar{\textbf{X}}_t = \textbf{x},\xi_k\right]\right|^2 \bar{\rho}^{N,\xi_k}_t d \textbf{x}\right] \\
     \leq & \tilde{c} \tau^2
   \left(3+\frac{1}{N}\mathcal{I}\left(\bar{\rho}_{T_k}^{N}\right)\right),
\end{aligned}
\end{equation}
where the positive constant $\tilde{c}$ is independent of $\xi_k$, $N$ and $t$, and this is what we desire. 

\textbf{Step 3:} Estimate $\mathbb{E}_{\xi_k} \int_{\mathbb{R}^{Nd}} \left| \bar{K}_t^{\xi_k,i}(\textbf{x})-K^{\xi_k}(x_i) \right|^2 \bar{\rho}^{N,\xi_k}_t d \textbf{x}$.\\
Similar to step 2, by Taylor's expansion, for any $t\in\left[T_k,T_{k+1}\right)$, 
\begin{equation*}
    \begin{aligned}
        \bar{K}^{\xi_k,i}_t(\textbf{x})-K^{\xi_k}(x_i) =& \mathbb{E} \left[K^{\xi_k}\left(\bar{X}^i_{T_k}\right)-K^{\xi_k}\left(\bar{X}^i_{t}\right)\mid\bar{\textbf{X}}_t = \textbf{x}, \xi_k\right]\\
        = & \mathbb{E} \left[\bar{X}^i_{T_k}-\bar{X}^i_{t}\mid\bar{\textbf{X}}_t = \textbf{x}, \xi_k\right]\cdot \nabla_{x_i} K^{\xi_k}(x_i)+\hat{r}_t(x_i),
    \end{aligned}
\end{equation*}
where the remainder takes the form
\begin{equation}
   \hat{r}_t(x_i):=\frac{1}{2}\mathbb{E}\left[ \left(\bar{X}^i_{T_k}-\bar{X}^i_{t}\right)^{\otimes 2}: \int_0^1 \nabla_{x_i}^2 K^{\xi_k}\left((1-s) \bar{X}_t^i+s \bar{X}_{T_k}^i\right) d s\mid \bar{\textbf{X}}_t = \textbf{x}, \xi_k\right] 
\end{equation}
In Lemma \ref{reminder}, we show that for $t \in\left[T_k, T_{k+1}\right)$,
$$
\mathbb{E}\left[\left|\hat{r}_t\left(\bar{X}_t^i\right)\right|^2 \mid \xi_k\right] \leq c \left(t-T_k\right)^2,
$$
where $c$ is a positive constant independent of $N$ and $\xi_k$. Repeat the same procedure as in step 2, one can get the $\tau^2$ estimate, that is
\begin{equation}
\begin{aligned}
    &\frac{1}{N}\sum_{i=1}^N \mathbb{E}_{\xi_k} \int_{\mathbb{R}^{Nd}} \left| \bar{K}^{\xi_k,i}_t(\textbf{x})-K^{\xi_k}(x_i) \right|^2 \bar{\rho}^{N,\xi_k}_t d \textbf{x} \\  \lesssim &  \mathbb{E}_{\xi_k} \left[\frac{1}{N}\sum_{i=1}^N \mathbb{E}\left[\left|\hat{r}_t\left(\bar{X}_t^i\right)\right|^2 \mid \xi_k \right] +\frac{1}{N}\sum_{i=1}^N \int_{\mathbb{R}^{Nd}} \left|\mathbb{E} \left[\bar{X}^i_{T_k}-\bar{X}^i_{t}\mid\bar{\textbf{X}}_t = \textbf{x}, \xi_k\right]\right|^2 \bar{\rho}^{N,\xi_k}_t d \textbf{x}\right]\\ 
    \leq & \tilde{c}^{\prime} \tau^2\left(3+\frac{1}{N}\mathcal{I}\left(\bar{\rho}_{T_k}^{N}\right)\right),
\end{aligned}
\end{equation}
where the positive constant $\tilde{c}^{\prime}$ is independent of $N$, $\xi_k$ and $t$. \\

\textbf{Step 4:} Estimate $\mathbb{E}_{\xi_k, \tilde{\xi}_k} \left[ \int_{\mathbb{R}^{Nd}} \left| K^{\xi_k}(x_i)- F(x_i)\right|^2 \frac{\left|\bar{\rho}^{N,\xi_k}_t-\bar{\rho}^{N,\tilde{\xi}_k}_t\right|^2}{\bar{\rho}^{N,\tilde{\xi}_k}_t}d\textbf{x}\right]$.\\

By the boundedness assumption in Assumption \ref{assume}, we only need to estimate $\int_{\mathbb{R}^{Nd}} \frac{\left|\bar{\rho}_t^{N, \tilde{\xi}_k}-\bar{\rho}_t^{N, \xi_k}\right|^2}{\bar{\rho}_t^{N, \tilde{\xi}_k}} d \textbf{x}$. 
First note that
$$
\int_{\mathbb{R}^{Nd}} \frac{\left|\bar{\rho}_t^{N, \tilde{\xi}_k}-\bar{\rho}_t^{N, \xi_k}\right|^2}{\bar{\rho}_t^{N, \tilde{\xi}_k}} d \textbf{x}=\int_{\mathbb{R}^{Nd}}\left|\frac{\bar{\rho}_t^{N, \xi_k}}{\bar{\rho}_t^{N, \tilde{\xi}_k}}-1\right|^2 \bar{\rho}_t^{N, \tilde{\xi}_k} d \textbf{x}.
$$
We call any measure on a path space  a path measure. Taking $\phi = \omega_t$, the time projection mapping, then by the Lemma \ref{path} and the Girsanov transform we can conclude
$$
\begin{aligned}
  & \frac{\bar{\rho}_t^{N, \xi_k}}{\bar{\rho}_t^{N, \tilde{\xi}_k}}(\textbf{x})  =  \mathbb{E}\left[\frac{d P_{\bar{\textbf{X}}}}{d P_{\bar{\textbf{Y}}}} \mid \bar{\textbf{Y}}_t=\textbf{x}, \xi_k, \tilde{\xi}_k\right]\\
 = & \mathbb{E}\left[\exp \left(\sqrt{\frac{1}{2\sigma}} \int_{T_k}^{t}\left(\textbf{K}^{\tilde{\xi}_k}-\textbf{K}^{\xi_k}\right)(\textbf{y}) d \textbf{W}_s-\frac{1}{4\sigma} \int_{T_k}^{t}\left|\left(\textbf{K}^{\tilde{\xi}_k}-\textbf{K}^{\xi_k}\right)(\textbf{y})\right|^2 d s\right)\mid \bar{\textbf{Y}}_t=\textbf{x}, \xi_k, \tilde{\xi}_k\right].
\end{aligned}
$$
Denote
$$
\tilde{\textbf{K}}(\textbf{y}):=\frac{1}{\sqrt{2\sigma}}\left(\textbf{K}^{\tilde{\xi}_k}-\textbf{K}^{\xi_k}\right)(\textbf{y}).
$$
Then, for $t \in\left[T_k, T_{k+1}\right)$, 
$$ 
\begin{aligned}
& \int_{\mathbb{R}^{Nd}}\left|\frac{\bar{\rho}_t^{N, \xi_k}}{\bar{\rho}_t^{N, \tilde{\xi_k}}}-1\right|^2 \bar{\rho}_t^{N, \tilde{\xi}_k} d \textbf{x} \\
= & \int_{\mathbb{R}^{Nd}} \bar{\rho}_t^{N, \tilde{\xi}_k}(\textbf{x})\left|\mathbb{E}\left[\exp \left(\int_{T_k}^t \tilde{\textbf{K}}\left(\bar{\textbf{Y}}_{T_k}\right) d \textbf{W}_s-\frac{1}{2} \int_{T_k}^t\left|\tilde{\textbf{K}}\left(\bar{\textbf{Y}}_{T_k}\right)\right|^2 d s\right) \mid \bar{\textbf{Y}}_t=\textbf{x}, \xi_k, \tilde{\xi}_k\right]-1\right|^2 d \textbf{x}.
\end{aligned}
$$
Notice the fact that $\bar{\textbf{Y}}_t=\textbf{y} + \left(\textbf{b}(\textbf{y}) +  \textbf{K}^{\tilde{\xi}_k}(\textbf{y})\right)\left(t-T_k\right)+\sqrt{2 \sigma}\left(\textbf{W}_t-\textbf{W}_{T_k}\right)$, resulting in that
\begin{equation}\label{step3K}
\begin{aligned}
& \int_{\mathbb{R}^{Nd}}\left|\frac{\bar{\rho}_t^{N, \xi_k}}{\bar{\rho}_t^{N, \tilde{\xi_k}}}-1\right|^2 \bar{\rho}_t^{N, \tilde{\xi}_k} d \textbf{x} \\
= & \int \bar{\rho}_t^{N, \tilde{\xi}_k}(\textbf{x}) \left|\int\left(e^{\sqrt{\frac{1}{2\sigma}} \tilde{\textbf{K}}(\textbf{y})\left(\textbf{x}-\textbf{y}-\left(t-T_k\right) \textbf{F}^{\tilde{\xi}_k}(\textbf{y})\right)-\frac{1}{2}|\tilde{\textbf{K}}(\textbf{y})|^2\left(t-T_k\right)}-1\right) P\left(\bar{\textbf{X}}_{T_k}=\textbf{y} \mid \bar{\textbf{Y}}_t=\textbf{x}, \xi_k, \tilde{\xi}_k\right) d \textbf{y}\right|^2 d \textbf{x},
\end{aligned}
\end{equation}
where $\textbf{F}^{\tilde{\xi}_k}(\textbf{y})=\textbf{b}(\textbf{y}) +  \textbf{K}^{\tilde{\xi}_k}(\textbf{y})$. Now we split (\ref{step3K}) into three terms
\begin{equation}
    \begin{aligned}
        & e^{\sqrt{\frac{1}{2\sigma}} \tilde{\textbf{K}}(\textbf{y})\left(\textbf{x}-\textbf{y}-\left(t-T_k\right) \textbf{F}^{\tilde{\xi}_k}(\textbf{y})\right)-\frac{1}{2}|\tilde{\textbf{K}}(\textbf{y})|^2\left(t-T_k\right)}-1\\
        = & \sqrt{\frac{1}{2\sigma}} \tilde{\textbf{K}}(\textbf{y})\left(\textbf{x}-\textbf{y}\right) + \left(-\sqrt{\frac{1}{2\sigma}} \tilde{\textbf{K}}(\textbf{y})\textbf{F}^{\tilde{\xi}_k}(\textbf{y})- \frac{1}{2}|\tilde{\textbf{K}}(\textbf{y})|^2\right)\left(t-T_k\right) + \left(e^{\textbf{z}}-\textbf{z}-1\right)\\
        := & K_1 + K_2 + K_3,
    \end{aligned}
\end{equation}
where $\textbf{z} = \sqrt{\frac{1}{2\sigma}} \tilde{\textbf{K}}(\textbf{y})\left(\textbf{x}-\textbf{y}-\left(t-T_k\right) \textbf{F}^{\tilde{\xi}_k}(\textbf{y})\right)-\frac{1}{2}|\tilde{\textbf{K}}(\textbf{y})|^2\left(t-T_k\right).$ Then
$$ 
 \int_{\mathbb{R}^{Nd}}\left|\frac{\bar{\rho}_t^{N, \xi_k}}{\bar{\rho}_t^{N, \tilde{\xi_k}}}-1\right|^2 \bar{\rho}_t^{N, \tilde{\xi}_k} d \textbf{x} = \int_{\mathbb{R}^{Nd}} \bar{\rho}_t^{N, \tilde{\xi}_k}(\textbf{x})\left|\int_{\mathbb{R}^{Nd}} \left(K_1 + K_2 + K_3\right)P\left(\bar{\textbf{X}}_{T_k}=\textbf{y} \mid \bar{\textbf{Y}}_t=\textbf{x}, \xi_k, \tilde{\xi}_k\right) d \textbf{y}\right|^2 d \textbf{x}.
$$
Equipped with the estimation for integration of $K_1$, $K_2$ and $K_3$ from Lemma \ref{remainK1}, Lemma \ref{remainK2} and  Lemma \ref{remainK3} in Appendix \ref{append}, we can get the estimate
$$
\int_{\mathbb{R}^{Nd}} \frac{\left|\bar{\rho}_t^{N, \tilde{\xi}_k}-\bar{\rho}_t^{N, \xi_k}\right|^2}{\bar{\rho}_t^{N, \tilde{\xi}_k}} d \textbf{x} \leq c\tau^2 \left(N+\mathcal{I}\left(\bar{\rho}^{N}_{T_k}\right)\right).
$$
Hence, 
\begin{equation}
\begin{aligned}
    & \frac{1}{N}\sum_{i=1}^N\mathbb{E}_{\xi_k, \tilde{\xi}_k} \left[ \int_{\mathbb{R}^{Nd}} \left| K^{\xi_k}(x_i)- F(x_i)\right|^2 \frac{\left|\bar{\rho}^{N,\xi_k}_t-\bar{\rho}^{N,\tilde{\xi}_k}_t\right|^2}{\bar{\rho}^{N,\tilde{\xi}_k}_t} d \textbf{x} \right] \\
    \leq & \frac{1}{N} \mathbb{E}_{\xi_k, \tilde{\xi}_k} \left[ \int_{\mathbb{R}^{Nd}} \left| \textbf{K}^{\xi_k}(\textbf{x})- \textbf{F}(\textbf{x})\right|^2 \frac{\left|\bar{\rho}^{N,\xi_k}_t-\bar{\rho}^{N,\tilde{\xi}_k}_t\right|^2}{\bar{\rho}^{N,\tilde{\xi}_k}_t} d \textbf{x} \right] \\
    \lesssim & \tilde{c}^{\prime \prime }\tau^2 \left(1+\frac{1}{N}\mathcal{I}\left(\bar{\rho}^{N}_{T_k}\right)\right),   
\end{aligned}
\end{equation}
where the positive constant  $\tilde{c}^{\prime \prime }$ is independent of $\xi_k$, $N$ and $t$. 

\textbf{Step 5:} Estimate $\int_{\mathbb{R}^{Nd}} \bar{\rho}_t^N \left|K(x_i)-K*\rho_t(x_i)\right|^2 d \textbf{x}$.\\

Let $\Phi(x_i)= \left(F(x_i)-K*\rho_t(x_i)\right)^2$ with $F(x_i)$ given in (\ref{Fxi}). Define the probability density
$$
f = \frac{1}{\lambda} \exp(N \eta \Phi) \rho^N_t, \quad \lambda = \int_{\mathbb{R}^{Nd}} \exp(N \eta \Phi) \rho^N_t d\textbf{x},
$$
where $\eta$ is an arbitrary positive number. Thanks to the function $h(x) = x\log x + 1 - x \geq 0$ for any $x>0$, we can obtain
$$
\begin{aligned}
\frac{1}{N} \int_{\mathbb{R}^{Nd}} \bar{\rho}_t^N \log \bar{\rho}_t^N d\textbf{x} = & \frac{1}{N}\int_{\mathbb{R}^{Nd}} f \left(\frac{\bar{\rho}_t^N}{f} \log \frac{\bar{\rho}_t^N}{f}-\frac{\bar{\rho}_t^N}{f}+1\right) d\textbf{x} + \frac{1}{N} \int_{\mathbb{R}^{Nd}} \bar{\rho}_t^N \log f d\textbf{x} \\
\geq & \frac{1}{N} \int_{\mathbb{R}^{Nd}} \bar{\rho}_t^N \log f d\textbf{x}.
\end{aligned}
$$
On the other hand, one can easily check that
$$
\frac{1}{N} \int_{\mathbb{R}^{Nd}} \bar{\rho}_t^N \log f d\textbf{x} = \eta \int_{\mathbb{R}^{Nd}} \bar{\rho}_t^N \Phi d\textbf{x}+ \frac{1}{N}\int_{\mathbb{R}^{Nd}} \bar{\rho}_t^N \log \rho_t^N d\textbf{x}-\frac{\log \lambda}{N}.
$$
Then we can conclude that
\begin{equation}
    \int_{\mathbb{R}^{Nd}} \bar{\rho}_t^N \Phi d\textbf{x} \leq \frac{1}{\eta}\left(\mathcal{H}_N\left(\bar{\rho}_t^N \mid \rho_t^N\right) + \frac{1}{N} \log \int_{\mathbb{R}^{Nd}} \rho_t^N \exp(N \eta \Phi) d\textbf{x}\right).
\end{equation}
We rewrite 
$$\Phi(x_i)=\left|\frac{1}{N}\sum_{j=1}^N K(x_i-x_j)-K*\rho_t(x_i)\right|^2=\frac{1}{N^2}\sum_{j_1,j_2=1}^N \phi(x_i,x_{j_1}) \phi(x_i,x_{j_2}),$$
where $\phi(x,z)=K(x-z)-K*\rho_t(x).$ Note that each $\phi$ has vanishing expectation with respect to $\rho$:
$$\int \phi(x,z) \rho_t(x) dx =0.$$
By taking $\|K\|_{L^{\infty}}$ small enough, we deduce that 
\begin{equation}
    \int_{\mathbb{R}^{Nd}} \rho^N_t e^{N \eta \Phi} d \textbf{x}=\int_{\mathbb{R}^{Nd}} \rho^N_t \operatorname{exp} \left( \frac{\eta}{N}\sum_{j_1,j_2=1}^N \phi(x_i,x_{j_1}) \phi(x_i,x_{j_2}) \right)d \textbf{x} \leq C,
\end{equation}
where the constant $C$ is independent of $N$ and $t$. Taking $\eta = \frac{4C_{LS}}{\sigma}$, and by Lemma \ref{lln}, we can obtain the estimate:
\begin{equation}
    \int_{\mathbb{R}^{Nd}} \bar{\rho}_t^N \left|F(x_i)-K*\rho_t(x_i)\right|^2 d \textbf{x} \leq \frac{\sigma}{4C_{LS}} \mathcal{H}_{N}\left(\bar{\rho}_t^N \mid \rho_t^N\right) + \frac{C}{N}.
\end{equation}

\textbf{Step 6}: Finally, combining the results in the previous steps we get the following desired estimate for $t\in\left[T_k, T_{k+1}\right)$: 
$$
\begin{aligned}
\frac{d}{d t}   \mathcal{H}_N\left( \bar{\rho}^N_t \mid \rho^N_t \right) \leq & -\frac{\sigma}{4 C_{LS}} \mathcal{H}_N\left( \bar{\rho}^N_t \mid \rho^N_t \right) + c_1 \tau^2 \left(1+ \frac{1}{N}\mathcal{I}\left(\bar{\rho}^{N}_{T_k}\right)\right)+ \frac{c_2}{N} \\
\leq & -\frac{\sigma}{4 C_{LS}} \mathcal{H}_N\left( \bar{\rho}^N_t \mid \rho^N_t \right) + c_1 \tau^2 \left(1+ \frac{1}{N}\mathbb{E}_{\xi_k}\left[\mathcal{I}\left(\bar{\rho}^{N,\xi_k}_{T_k}\right)\right]\right)+ \frac{c_3}{N}\\
\leq & -C_0 \mathcal{H}_N\left( \bar{\rho}^N_t \mid \rho^N_t \right) + C_1 \tau^2 + \frac{C_2}{N},
\end{aligned}
$$
where the second inequality is due to the convexity of the Fisher information $\mathcal{I}(\rho)$ with respect to $\rho$, and the third inequality is due to Lemma \ref{fishlemma}. Note that the constants $C_0$, $C_1$ and $C_2$ are independent of $N$, $\tau$ and $\xi_k$, then by Gronwall's inequality, we end the proof. 
\end{proof}

\section{Conclusion}\label{sec13}

In this paper we prove the error estimate of propagation of chaos form for random batch method for $N-$ particle systems toward its mean-field limit, the Fokker-Planck equation, based on the relative entropy. We show that the convergence rate is $O(\tau^2+1/N)$, where $\tau$ is the small time steps.  Our result can be seen as an improvement over the previous work, and we fill the gap to understand the approximation error of the RBM as a numerical method for its mean-field limit. Our results need some regularity assumptions on the force
terms. It will be an interesting topic to investigate the error estimate of random batch method for singular interacting kernel such as the Biot-Savart Law in incompressible flows, among other applications.

\section*{Acknowledgement}
This work is financially supported by the National Key R\&D Program of China, Project Number 2021YFA1002800. S. Jin was partially supported by the NSFC grant No. 12350710181, The Science and Technology Commission of Shanghai Municipality
grant No. 20JC1414100. The authors are also supported by the Fundamental Research Funds for the Central Universities. The work of L. Li was partially supported by NSFC 12371400,  Shanghai Science and Technology Commission (Grant No. 21JC1403700, 21JC1402900), the Strategic Priority Research Program of Chinese Academy of Sciences, Grant No. XDA25010403. S. Jin and L. Li were also partially supported by Shanghai Municipal Science and Technology Major Project 2021SHZDZX0102.

\appendix
\section{Proof of Lemma \ref{Lioulemma}}\label{liouappend}
\begin{proof}[Proof of Lemma \ref{Lioulemma}]
Indeed, for $t \in\left[T_k, T_{k+1}\right)$, consider the random variable $\bar{\varrho}_t^{N, \boldsymbol{\xi}} \mid \mathcal{F}_{T_k}$, where $\mathcal{F}_{T_k}=\sigma\left(\bar{\textbf{X}}_s, s \leq T_k\right)$. Then, by definition of $\bar{\textbf{X}}_t$ for $t \in\left[T_k, T_{k+1}\right)$, $\bar{\varrho}_t^{N, \boldsymbol{\xi}} \mid \mathcal{F}_{T_k}$ satisfies the Liouville equation:
$$
\partial_t\left(\bar{\varrho}_t^{N, \boldsymbol{\xi}} \mid \mathcal{F}_{T_k}\right)=-\sum_{i=1}^N \operatorname{div}_{x_i} \left(\left(b\left(\bar{X}_{T_k}^i\right)+K^{\xi_k}\left(\bar{X}_{T_k}^i\right)\right) \left(\hat{\rho}_t^{N} \mid \mathcal{F}_{T_k}\right)\right)+
\sum_{i=1}^N \sigma \Delta_{x_i} \left(\bar{\varrho}_t^{N, \boldsymbol{\xi}} \mid \mathcal{F}_{T_k}\right).
$$
Taking expectation, one has
$$
\mathbb{E}\left[\partial_t\left(\bar{\varrho}_t^{N, \boldsymbol{\xi}} \mid \mathcal{F}_{T_k}\right)\right]=\partial_t \bar{\varrho}_t^{N, \boldsymbol{\xi}}, \quad \mathbb{E}\left[\Delta_{x_i} \partial_t\left(\bar{\varrho}_t^{N, \boldsymbol{\xi}} \mid \mathcal{F}_{T_k}\right)\right]=\Delta_{x_i} \bar{\varrho}_t^{N, \boldsymbol{\xi}},
$$
and for the drift term,
$$
\begin{aligned}
&\mathbb{E}\left[\operatorname{div}_{x_i} \left(\left(b\left(\bar{X}_{T_k}^i\right)+K^{\xi_k}\left(\bar{X}_{T_k}^i\right)\right) \left(\bar{\varrho}_t^{N, \boldsymbol{\xi}} \mid \mathcal{F}_{T_k}\right)\right)\right]\\
 = &\operatorname{div}_{x_i} \int\left(\bar{\varrho}_t^{N, \boldsymbol{\xi}} \mid \mathcal{F}_{T_k}\right)(\textbf{x} \mid \textbf{y}) (b(y_i)+K^{\xi_k}(y_i)) \bar{\varrho}_t^{N, \boldsymbol{\xi}}(\textbf{y}) d \textbf{y} \\
 = &\operatorname{div}_{x_i}\int (b(y_i)+K^{\xi_k}(y_i)) \bar{\varrho}_{t,T_k}^{N, \boldsymbol{\xi}}(\textbf{x}, \textbf{y}) d \textbf{y} \\
 = &\operatorname{div}_{x_i} \left(\bar{\varrho}_t^{N, \boldsymbol{\xi}}(\textbf{x}) \mathbb{E}\left[b\left(\bar{X}_{T_k}^i\right) + K^{\xi_k}\left(\bar{X}_{T_k}^i\right) \mid \bar{\textbf{X}}_t=\textbf{x},\boldsymbol{\xi}\right]\right),
\end{aligned}
$$
where $\bar{\varrho}_{t,T_k}^{N, \boldsymbol{\xi}}$ denotes the joint distribution  of $\bar{\textbf{X}}_t$ and $\bar{\textbf{X}}_{T_k}$. Note that we used Bayes' law in the third equality. Combining all the above, we obtain the desired result (\ref{liou}).
\end{proof}
\section{Omitted details in Section \ref{mainthm}}\label{append}
In this Appendix, we prove the details omitted in the proof of Theorem \ref{thm}.

\begin{lemma}\label{reminder}
There exist positive constants $c_1$ and $c_2$ independent of $N$ and the batch $\xi_k$ such that for all $t\in\left[T_k,T_{k+1}\right)$, it holds that
   \begin{equation}
       \mathbb{E} \left[\left|\bar{r}_t\left(\bar{X}_t^i\right) \right|^2 \mid \xi_k\right]\leq c_1(t-T_k)^2;
   \end{equation} 
   \begin{equation}
       \mathbb{E}\left[\left|\hat{r}_t\left(\bar{X}_t^i\right)\right|^2 \left.\right|\xi_k\right] \leq c_2 \left(t-T_k\right)^2.
   \end{equation}
\end{lemma}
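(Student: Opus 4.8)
The two bounds have an identical structure, so I plan to prove the estimate for $\bar{r}_t$ in full and then observe that the one for $\hat{r}_t$ follows verbatim after replacing $b$ by $K^{\xi_k}$ (the remark after Assumption \ref{assume} guarantees that $K^{\xi_k}$ enjoys the same Lipschitz and polynomial-Hessian-growth bounds as $K$, uniformly in $\xi_k$). The first move is to note that $\bar{r}_t(\bar{X}_t^i)$ is itself a conditional expectation given $(\bar{\textbf{X}}_t,\xi_k)$, so conditional Jensen lets me pull the square inside, and the tower property removes the inner conditioning:
\[
\mathbb{E}\left[\left|\bar{r}_t(\bar{X}_t^i)\right|^2 \mid \xi_k\right] \leq \frac{1}{4}\,\mathbb{E}\left[\left|\left(\bar{X}_{T_k}^i-\bar{X}_t^i\right)^{\otimes 2} : \int_0^1 \nabla_{x_i}^2 b\!\left((1-s)\bar{X}_t^i+s\bar{X}_{T_k}^i\right) ds\right|^2 \mid \xi_k\right].
\]
This reduces everything to a moment estimate for the increment $\bar{X}_{T_k}^i-\bar{X}_t^i$ weighted by the Hessian.

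Next I would bound the contraction pointwise. Since $\left|A^{\otimes 2}:M\right|\leq |A|^2|M|$, and the polynomial-growth Assumption \ref{assume}(c) gives $|\nabla^2 b(y)|\leq C(1+|y|)^q$ together with $|(1-s)\bar{X}_t^i+s\bar{X}_{T_k}^i|\leq |\bar{X}_t^i|+|\bar{X}_{T_k}^i|$, the integrand is dominated by $C\,|\bar{X}_{T_k}^i-\bar{X}_t^i|^2\,(1+|\bar{X}_t^i|+|\bar{X}_{T_k}^i|)^q$. Squaring and applying the Cauchy--Schwarz inequality splits the bound into a product of an increment moment and a polynomial moment:
\[
\mathbb{E}\left[\left|\bar{r}_t(\bar{X}_t^i)\right|^2 \mid \xi_k\right] \lesssim \left(\mathbb{E}\left[\left|\bar{X}_{T_k}^i-\bar{X}_t^i\right|^8 \mid \xi_k\right]\right)^{1/2}\left(\mathbb{E}\left[\left(1+|\bar{X}_t^i|+|\bar{X}_{T_k}^i|\right)^{4q} \mid \xi_k\right]\right)^{1/2}.
\]
The second factor is $O(1)$ uniformly in $\xi_k$, $N$ and $t$ directly by the moment control of Lemma \ref{moment}.

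The remaining task is the increment moment. Using the explicit c-RBM representation (\ref{onerbm}), I write $\bar{X}_{T_k}^i-\bar{X}_t^i = -(t-T_k)\big(b(\bar{X}_{T_k}^i)+K^{\xi_k}(\bar{X}_{T_k}^i)\big)-\sqrt{2\sigma}\,(W_t^i-W_{T_k}^i)$, so that the eighth moment splits into a drift contribution of order $(t-T_k)^8$ and a Brownian contribution $\mathbb{E}|W_t^i-W_{T_k}^i|^8 = c_d (t-T_k)^4$ coming from the Gaussian moment formula. The drift contribution is controlled because the linear growth of $b$ and $K^{\xi_k}$ reduces it to moments of $\bar{X}_{T_k}^i$ bounded by Lemma \ref{moment}; since $(t-T_k)^8\leq \tau^4(t-T_k)^4$, it is dominated by the Brownian term. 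Hence $\mathbb{E}[|\bar{X}_{T_k}^i-\bar{X}_t^i|^8\mid\xi_k]\lesssim (t-T_k)^4$, its square root is $\lesssim (t-T_k)^2$, and combining with the $O(1)$ polynomial factor yields $\mathbb{E}[|\bar{r}_t(\bar{X}_t^i)|^2\mid\xi_k]\leq c_1(t-T_k)^2$, with $c_1$ independent of $N$ and $\xi_k$.

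I expect the only genuinely delicate point to be this eighth-moment bound, specifically decoupling the polynomial-growth Hessian term from the increment cleanly (via H\"older) so that the uniform-in-$\xi_k$, uniform-in-$N$ bound of Lemma \ref{moment} applies; everything else is a routine Taylor-remainder and Gaussian-moment computation. A minor bookkeeping issue is that in the $\hat{r}_t$ case the Hessian bound for $K^{\xi_k}$ must be invoked in its averaged-over-the-batch form, but the stated uniform polynomial growth of $K^{\xi_k}$ makes this immediate.
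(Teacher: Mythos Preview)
Your proof is correct and follows essentially the same strategy as the paper: apply conditional Jensen to push the square inside, bound the Hessian via its polynomial growth, estimate the increment moment from the explicit c-RBM formula, and invoke Lemma~\ref{moment} for the uniform moment control. The only minor difference is that you decouple the polynomial Hessian weight from the increment via Cauchy--Schwarz (hence an eighth-moment bound on the increment), whereas the paper absorbs the polynomial factor more directly and works with fourth moments; your treatment is in fact the more careful of the two on this point.
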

\begin{proof}
 Since  the Hessians of $b$ and $K^{\xi_k}$ have polynomial growth and , combining with the moment control yields
 \begin{equation}
 \begin{aligned}
         \left|\bar{r}_t(x_i) \right| & \leq \int_0^1 \mathbb{E}\left[\left|\nabla^2_{x_i} b\left((1-s) \bar{X}_t^i +s \bar{X}_{T_k}^i \right)\right| \cdot\left|\bar{X}_{T_k}^i-\bar{X}_t^i\right|^2 \mid \bar{\textbf{X}}_t=\textbf{x},\xi_k\right]\\
         & \leq \frac{1}{q+1} \mathbb{E} \left[(1+\left|\bar{X}_t^i\right|^q + \left|\bar{X}_{T_k}^i\right|^q)\cdot\left|\bar{X}_{T_k}^i-\bar{X}_t^i\right|^2 \mid \bar{\textbf{X}}_t=\textbf{x},\xi_k\right]\\
         & \leq C \mathbb{E} \left[\left|\bar{X}^i_{T_k}-\bar{X}^i_{t}\right|^2\mid \bar{\textbf{X}}_t=\textbf{x},\xi_k \right]; \\
         \left|\hat{r}_t(x_i) \right| & \leq \int_0^1 \mathbb{E}\left[\left|\nabla^2_{x_i} K^{\xi_k}\left((1-s) \bar{X}_t^i +s \bar{X}_{T_k}^i \right)\right| \cdot\left|\bar{X}_{T_k}^i-\bar{X}_t^i\right|^2 \mid \bar{\textbf{X}}_t=\textbf{x}, \xi_k\right]\\
         &  \leq \tilde{C} \mathbb{E} \left[\left|\bar{X}^i_{T_k}-\bar{X}^i_{t}\right|^2\mid \bar{\textbf{X}}_t=\textbf{x},\xi_k \right].
 \end{aligned}
 \end{equation}
Hence by Jensen's inequality, taking the expectation and using polynomial growth assumption for $b$ and  Lipschitz assumption for $K^{\xi_k}$, we have
\begin{equation}
    \begin{aligned}
\mathbb{E}\left[\left|\bar{r}_t\left(\bar{X}_t^i\right)\right|^2 \mid \xi_k\right] & \leq C^2  \mathbb{E}\left[\left|\mathbb{E}\left[\left|\bar{X}_{T_k}^i-\bar{X}_t^i\right|^2 \mid \bar{\textbf{X}}_t \right]\right|^2 \mid \xi_k\right] \\
& \leq C^2  \mathbb{E}\left[\left|b\left(\bar{X}_{T_k}^i\right)\left(t-T_k\right)+K^{\xi_k}\left(\bar{X}_{T_k}^i\right)\left(t-T_k\right)+\int_{T_k}^t d W\right|^4 \mid \xi_k \right] \\
& \leq C^2 \left(\left(t-T_k\right)^4\left(1+\mathbb{E}\left[\left|\bar{X}_{T_k}^i\right|^q\mid \xi_k\right]\right)^4+\left(t-T_k\right)^4\left(\left|K^{\xi_k}(0)\right|+L \mathbb{E}\left[\left|\bar{X}_{T_k}^i\right|\mid \xi_k\right]\right)^4\right.\\
&  + \left. 3\left(t-T_k\right)^2 \right);\\
\mathbb{E}\left[\left|\hat{r}_t\left(\bar{X}_t^i\right)\right|^2 \mid \xi_k \right] & \leq \tilde{C}^2  \mathbb{E}\left[\left|\mathbb{E}\left[\left|\bar{X}_{T_k}^i-\bar{X}_t^i\right|^2 \mid \bar{\textbf{X}}_t \right]\right|^2 \mid \xi_k \right] \\
& \leq \tilde{C}^2  \mathbb{E}\left[\left|b\left(\bar{X}_{T_k}^i\right)\left(t-T_k\right)+K^{\xi_k}\left(\bar{X}_{T_k}^i\right)\left(t-T_k\right)+\int_{T_k}^t d W\right|^4\mid \xi_k \right] \\
& \leq  \tilde{C}^2 \left(\left(t-T_k\right)^4\left(1+ \mathbb{E}\left[\left|\bar{X}_{T_k}^i\right|^q\mid \xi_k\right]\right)^4+\left(t-T_k\right)^4\left(\left|K^{\xi_k}(0)\right|+L \mathbb{E}\left[\left|\bar{X}_{T_k}^i\right|\mid \xi_k\right]\right)^4\right. \\
&  + \left. 3\left(t-T_k\right)^2 \right).
\end{aligned}
\end{equation}
Finally, by the moment control Lemma \ref{moment}, we have a uniform bound for the moment $\mathbb{E}\left[\left|\bar{X}_{T_k}^i\right|^4\mid \xi_k\right]$, which leads to the conclusion.
\end{proof}

\begin{lemma}\label{remainb}
Under the setting of Theorem \ref{thm}, there exists a positive constant $c$ independent of $\xi_k$, $N$ and $t$ such that for $t\in\left[T_k, T_{k+1}\right)$, 
    \begin{equation}
    \int_{\mathbb{R}^{Nd}} \left|\mathbb{E} \left[\bar{\textbf{X}}_{T_k}-\bar{\textbf{X}}_{t}\mid\bar{\textbf{X}}_t = \textbf{x},\xi_k\right]\right|^2 \bar{\rho}^{N,\xi_k}_t d \textbf{x} \leq c\tau^2\left(3N+\mathcal{I}\left(\bar{\rho}_{T_k}^{N}\right)\right),
\end{equation}
where $\mathcal{I}\left(\bar{\rho}_{T_k}^{N}\right)$ is the Fisher information.
\end{lemma}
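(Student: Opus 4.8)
The plan is to start from the Bayes representation (\ref{step2bayes}) and exploit the explicit Gaussian form of the transition kernel $G(\textbf{x},\textbf{y}):=P(\bar{\textbf{X}}_t=\textbf{x}\mid\bar{\textbf{X}}_{T_k}=\textbf{y},\xi_k)$ to perform the integration by parts that trades the factor $\textbf{y}-\textbf{x}$ for a score. Writing $s:=t-T_k$ and $\textbf{F}^{\xi_k}(\textbf{y}):=\textbf{b}(\textbf{y})+\textbf{K}^{\xi_k}(\textbf{y})$, the kernel obeys $\nabla_{\textbf{x}}G=-\frac{1}{2\sigma s}\big(\textbf{x}-\textbf{y}-s\,\textbf{F}^{\xi_k}(\textbf{y})\big)G$, so that $\textbf{y}-\textbf{x}=-s\,\textbf{F}^{\xi_k}(\textbf{y})+2\sigma s\,\nabla_{\textbf{x}}G/G$. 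Substituting this into (\ref{step2bayes}) and using $\bar{\rho}^{N,\xi_k}_t(\textbf{x})=\int G(\textbf{x},\textbf{y})\bar{\rho}^N_{T_k}(\textbf{y})\,d\textbf{y}$, which lets me pull $\nabla_{\textbf{x}}$ out of the $\textbf{y}$-integral, I obtain the clean identity
\[
\mathbb{E}\!\left[\bar{\textbf{X}}_{T_k}-\bar{\textbf{X}}_t\mid\bar{\textbf{X}}_t=\textbf{x},\xi_k\right]=-s\,\mathbb{E}\!\left[\textbf{F}^{\xi_k}(\bar{\textbf{X}}_{T_k})\mid\bar{\textbf{X}}_t=\textbf{x},\xi_k\right]+2\sigma s\,\nabla_{\textbf{x}}\log\bar{\rho}^{N,\xi_k}_t(\textbf{x}).
\]
This is exactly the step that eliminates the Gaussian noise: the conditional expectation of the Brownian increment is converted into the score of the time-$t$ law, which contributes a Fisher information rather than a diverging noise term.

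Next I would square, integrate against $\bar{\rho}^{N,\xi_k}_t$, and split via $(a+b)^2\le 2a^2+2b^2$. For the drift piece, Jensen's inequality gives $\int|\mathbb{E}[\textbf{F}^{\xi_k}(\bar{\textbf{X}}_{T_k})\mid\cdot]|^2\bar{\rho}^{N,\xi_k}_t\,d\textbf{x}\le\mathbb{E}|\textbf{F}^{\xi_k}(\bar{\textbf{X}}_{T_k})|^2$, and since $\textbf{F}^{\xi_k}$ has $N$ blocks each controlled by the Lipschitz/linear growth of $b,K^{\xi_k}$ together with the uniform moment bound of Lemma \ref{moment}, this is $O(N)$. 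For the score piece, $\int|\nabla_{\textbf{x}}\log\bar{\rho}^{N,\xi_k}_t|^2\bar{\rho}^{N,\xi_k}_t\,d\textbf{x}=\mathcal{I}(\bar{\rho}^{N,\xi_k}_t)$ is precisely a Fisher information. Collecting the prefactors $s^2\le\tau^2$ then gives a bound of the form $c\tau^2\big(N+\mathcal{I}(\bar{\rho}^{N,\xi_k}_t)\big)$.

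The remaining and main obstacle is that the target bound is stated in terms of $\mathcal{I}(\bar{\rho}^N_{T_k})$, the Fisher information at the grid point, whereas the score identity naturally produces $\mathcal{I}(\bar{\rho}^{N,\xi_k}_t)$ at the interior time $t$. I would close this gap with the one-step Fisher-information machinery already developed in Section \ref{sec3}: on $[T_k,t)$ the c-RBM is the deterministic bi-Lipschitz map $\psi^{\xi_k}_s$ (with time $s\le\tau$) followed by convolution with $\mathcal{N}(0,2\sigma s\,\textbf{I}_{Nd})$, so Stam's inequality (\ref{fisherineq}) gives $\mathcal{I}(\bar{\rho}^{N,\xi_k}_t)\le\mathcal{I}(p_s)$, and the intermediate-density estimate yields $\mathcal{I}(\bar{\rho}^{N,\xi_k}_t)\le\frac{1+\tau(r+L)}{1-\tau(r+L)}\big(\mathcal{I}(\bar{\rho}^N_{T_k})+M(r+L)N\tau/(1-\tau(r+L))\big)$. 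For $\tau$ small this is $\le C(\mathcal{I}(\bar{\rho}^N_{T_k})+N)$, and substituting back produces the claimed $c\tau^2\big(3N+\mathcal{I}(\bar{\rho}^N_{T_k})\big)$. The only technical care needed is justifying the differentiation under the integral sign and the integration by parts, for which the Gaussian decay of $G$ and the moment and Fisher bounds of Section \ref{sec3} suffice.
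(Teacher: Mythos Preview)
Your argument is correct, but it takes a different route from the paper's own proof. The paper integrates by parts in the \emph{backward} variable $\textbf{y}$ rather than in $\textbf{x}$. Concretely, it writes $\textbf{y}-\textbf{x}=a_1-a_2-a_3$ with
\[
a_1=\bigl(\textbf{I}_{Nd}+s\,\nabla\textbf{F}^{\xi_k}(\textbf{y})\bigr)\cdot\bigl(\textbf{y}-\textbf{x}+s\,\textbf{F}^{\xi_k}(\textbf{y})\bigr),
\]
chosen so that $a_1$ is (up to a scalar) $\nabla_{\textbf{y}}G(\textbf{x},\textbf{y})$; integrating by parts in $\textbf{y}$ against $\bar{\rho}^N_{T_k}(\textbf{y})$ then lands \emph{directly} on $\nabla_{\textbf{y}}\log\bar{\rho}^N_{T_k}$ and hence on $\mathcal{I}(\bar{\rho}^N_{T_k})$, with no further conversion needed. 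The remaining pieces $a_2,a_3$ are handled by Jensen and moment bounds, exactly as you do for the drift term.

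Your version instead uses the forward score identity (essentially Tweedie's formula), which yields $\mathcal{I}(\bar{\rho}^{N,\xi_k}_t)$ at the interior time $t$ and then requires the extra step of pushing this back to $\mathcal{I}(\bar{\rho}^N_{T_k})$ via the Stam/bi-Lipschitz machinery of Section \ref{sec3}. That step is legitimate---the lemma there depends only on the map $\psi^{\xi_k}_s$ and moment control, not on the specific initial density---so the detour works, but it is a detour. The paper's $\textbf{y}$-integration-by-parts is shorter and gives the grid-point Fisher information in one move; your approach has the advantage of a very clean intermediate identity and of reusing already-proved Fisher estimates rather than reproducing an integration-by-parts computation.
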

\begin{proof}
    We split (\ref{step2bayes}) into three parts and use integration by parts. Let
$$
\begin{aligned}
\textbf{y}-\textbf{x}= & \left(\textbf{I}_{Nd}+\left(t-T_k\right) \left(\nabla \textbf{b}(\textbf{y})+\nabla \textbf{K}^{\xi_k}(\textbf{y})\right)\right) \cdot\left(\textbf{y}-\textbf{x}+\left(t-T_k\right)\left(\textbf{b}(\textbf{y})+ \textbf{K}^{\xi_k}(\textbf{y})\right)\right) \\
& -\left(t-T_k\right) \left(\nabla \textbf{b}(\textbf{y})+\nabla \textbf{K}^{\xi_k}(\textbf{y})\right) \cdot\left(\textbf{y}-\textbf{x}+\left(t-T_k\right)\left(\textbf{b}(\textbf{y})+ \textbf{K}^{\xi_k}(\textbf{y})\right)\right) \\
& -\left(t-T_k\right)\left(\textbf{b}(\textbf{y})+ \textbf{K}^{\xi_k}(\textbf{y})\right) \\
:= & a_1(\textbf{x},\textbf{y})-a_2(\textbf{x},\textbf{y})-a_3(\textbf{x},\textbf{y}),
\end{aligned}
$$
and define
$$
I_i(\textbf{x}):=\mathbb{E}\left[a_i\left(\bar{\textbf{X}}_t, \bar{\textbf{X}}_{T_k}\right) \mid \bar{\textbf{X}}_t=\textbf{x},\xi_k\right], \quad i=1,2,3.
$$
\begin{itemize}
    \item [(a)] For the term $I_1$, since the distribution $P\left(\bar{\textbf{X}}_t=\textbf{x} \mid \bar{\textbf{X}}_{T_k}=\textbf{y},\xi_k\right)$ is Gaussian, then after integration by parts we obtain:
$$
I_1(\textbf{x})=2 \sigma\left(t-T_k\right) \int_{\mathbb{R}^{Nd}} \frac{\nabla \bar{\rho}_{T_k}^{N}(\textbf{y})}{\bar{\rho}_t^{N,\xi_k}(\textbf{x})} P\left(\bar{\textbf{X}}_t=\textbf{x} \mid \bar{\textbf{X}}_{T_k}=\textbf{y} ,\xi_k \right) d \textbf{y}.
$$
Using Bayes' law again, one has
$$
  \begin{aligned}
      I_1(\textbf{x})= & 2 \sigma\left(t-T_k\right) \int_{\mathbb{R}^{Nd}} \frac{\nabla \bar{\rho}_{T_k}^{N}(\textbf{y})}{\bar{\rho}_{T_k}^{N}(\textbf{y})} P\left( \bar{\textbf{X}}_{T_k}=\textbf{y} \mid \bar{\textbf{X}}_t=\textbf{x},\xi_k \right) d \textbf{y}.
  \end{aligned}  
$$
Hence, by Jensen's inequality,
$$
\begin{aligned}
\mathbb{E}\left[\left|I_1\left(\bar{\textbf{X}}_t\right)\right|^2 \mid \xi_k \right] & \leq 4 \sigma^{2}\left(t-T_k\right)^2 \int_{\mathbb{R}^{Nd}} \bar{\rho}_t^{N,\xi_k}(\textbf{x}) \int_{\mathbb{R}^{Nd}}  \left|\frac{\nabla \bar{\rho}_{T_k}^{N}(\textbf{y})}{\bar{\rho}_{T_k}^{N}(\textbf{y})}\right|^2 P\left( \bar{\textbf{X}}_{T_k}=\textbf{y} \mid \bar{\textbf{X}}_t=\textbf{x},\xi_k \right) d \textbf{y} d \textbf{x} \\
& :=c\left(t-T_k\right)^2 \mathcal{I}\left(\bar{\rho}_{T_k}^N\right) .
\end{aligned}
$$

\item[(b)] For the term $I_2$, using the Lipshitz condition in Assumption \ref{assume} and Jensen's inequality,
$$
\begin{aligned}
\mathbb{E}\left[\left|I_2\left(\bar{\textbf{X}}_t\right)\right|^2 \mid \xi_k\right] & \leq c^{\prime}\left(t-T_k\right)^2 \mathbb{E}\left[\left|\mathbb{E}\left[\bar{\textbf{X}}_{T_k}-\bar{\textbf{X}}_t+\left(t-T_k\right)\left(\textbf{b}\left(\bar{\textbf{X}}_{T_k}\right)+ \textbf{K}^{\xi_k}\left(\bar{\textbf{X}}_{T_k}\right)\right) \mid \bar{\textbf{X}}_{t}\right]\right|^2 \mid \xi_k\right] \\
& \leq c^{\prime}\left(t-T_k\right)^2 \mathbb{E}\left[\left|\bar{\textbf{X}}_{T_k}-\bar{\textbf{X}}_t+\left(t-T_k\right)\left(\textbf{b}\left(\bar{\textbf{X}}_{T_k}\right)+ \textbf{K}^{\xi_k}\left(\bar{\textbf{X}}_{T_k}\right)\right)\right|^2 \mid \xi_k\right] \\
& \leq c^{\prime}\left(t-T_k\right)^2 \mathbb{E}\left[\left|\int_{T_t}^t \sqrt{2 \sigma} d \textbf{W}_s\right|^2 \mid \xi_k \right] :=c^{\prime}N \left(t-T_k\right)^3 .
\end{aligned}
$$ 

\item[(c)] For the term $I_3$,  using Jensen's inequality and the polynomial growth assumption for $b$ and boundedness assumption for $K^{\xi_k}$, combining with the moment control Lemma \ref{moment}, it is clear that
$$
\begin{aligned}
\mathbb{E}\left[\left|I_3\left(\bar{\textbf{X}}_t\right)\right|^2 \mid \xi_k\right] & \leq\left(t-T_k\right)^2 \mathbb{E}\left[\left|\textbf{b}\left(\bar{\textbf{X}}_{T_k}\right)+\textbf{K}^{\xi_k}\left(\bar{\textbf{X}}_{T_k}\right)\right|^2 \mid \xi_k\right] \\
& \leq  c^{\prime \prime} N\left(t-T_k\right)^2.
\end{aligned}
$$
\end{itemize}
In conclusion, we obtain an $\tau^2$ estimate for $\mathbb{E}\left[\left|I_i\left(\bar{\textbf{X}}_t\right)\right|^2\right], i=1,2,3$. We claim that, there exist positive constants $c, c^{\prime}, c^{\prime \prime}$ independent of $N$ and $t$ such that for $t \in\left[T_k, T_{k+1}\right)$,
$$
\begin{gathered}
\mathbb{E}\left[\left|I_1\left(\bar{\textbf{X}}_t\right)\right|^2  \mid \xi_k\right] \leq c\left(t-T_k\right)^2 \mathcal{I}\left(\bar{\rho}_{T_k}^{N}\right), \\
\mathbb{E}\left[\left|I_2\left(\bar{\textbf{X}}_t\right)\right|^2  \mid \xi_k\right] \leq c^{\prime}N \left(t-T_k\right)^3, \\
\mathbb{E}\left[\left|I_3\left(\bar{\textbf{X}}_t\right)\right|^2  \mid \xi_k\right] \leq c^{\prime \prime} N\left(t-T_k\right)^2,
\end{gathered}
$$
where $\mathcal{I}\left(\bar{\rho}_{T_k}^{N}\right):=\int_{\mathbb{R}^{Nd}} \bar{\rho}_{T_k}^{N} |\nabla \log \bar{\rho}_{T_k}^{N}|^2 d \textbf{x}$ is the Fisher information of the joint law for the $N$ particles. Hence, combining the above bound one gets
\begin{equation}
 \int_{\mathbb{R}^{Nd}} \left|\mathbb{E} \left[\bar{\textbf{X}}_{T_k}-\bar{\textbf{X}}_{t}\mid\bar{\textbf{X}}_t = \textbf{x},\xi_k\right]\right|^2 \bar{\rho}^{N,\xi_k}_t d \textbf{x} \leq 3\sum_{k=1}^3 \mathbb{E}\left[\left|I_k\left(\bar{\textbf{X}}_t\right)\right|^2  \mid \xi_k\right]  \leq c \tau^2
   \left(3N+\mathcal{I}\left(\bar{\rho}_{T_k}^{N}\right)\right),
\end{equation}
where the positive constant $\tilde{c}$ is independent of $\xi_k$, $N$ and $t$, and this is what we desire.
\end{proof}

\begin{lemma}\label{remainK1}
Recall the notations
\begin{equation}
  K_1 = \sqrt{\frac{1}{2\sigma}} \tilde{\textbf{K}}(\textbf{y})\left(\textbf{x}-\textbf{y}\right), \tilde{\textbf{K}}(\textbf{y}):=\frac{1}{\sqrt{2\sigma}}\left(\textbf{K}^{\tilde{\xi}_k}-\textbf{K}^{\xi_k}\right)(\textbf{y}).
\end{equation}
Then under the setting of Theorem \ref{thm}, there exists a positive constant $c$ independent of $\xi_k$, $\tilde{\xi}_k$, $N$ and $t$ such that for $t\in\left[T_k, T_{k+1}\right)$, 
    \begin{equation}\label{k1}
    \int_{\mathbb{R}^{Nd}} \bar{\rho}_t^{N, \tilde{\xi}_k}(\textbf{x})\left|\int_{\mathbb{R}^{Nd}} K_1 P\left(\bar{\textbf{X}}_{T_k}=\textbf{y} \mid \bar{\textbf{Y}}_t=\textbf{x}, \xi_k, \tilde{\xi}_k\right) d \textbf{y}\right|^2 d \textbf{x} \leq c \tau^2\left(N+\mathcal{I}\left(\bar{\rho}^{N}_{T_k}\right)\right).
\end{equation}
\end{lemma}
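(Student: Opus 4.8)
The plan is to read the inner integral as a conditional expectation and then run the integration-by-parts scheme already developed for Lemma~\ref{remainb}, carrying the bounded prefactor $\tilde{\textbf{K}}$ through every step. Writing $\textbf{x}=\bar{\textbf{Y}}_t$ for the conditioning value and $\textbf{y}=\bar{\textbf{X}}_{T_k}=\bar{\textbf{Y}}_{T_k}$ for the integration variable, Bayes' law turns the left-hand side of \eqref{k1} into
$$
\frac{1}{2\sigma}\int_{\mathbb{R}^{Nd}}\bar{\rho}_t^{N,\tilde{\xi}_k}(\textbf{x})\left|\mathbb{E}\!\left[\tilde{\textbf{K}}(\bar{\textbf{Y}}_{T_k})\cdot(\textbf{x}-\bar{\textbf{Y}}_{T_k})\mid \bar{\textbf{Y}}_t=\textbf{x}\right]\right|^2 d\textbf{x},
$$
where the transition law $P(\bar{\textbf{Y}}_t=\textbf{x}\mid\bar{\textbf{Y}}_{T_k}=\textbf{y})$ is the $Nd$-dimensional Gaussian $G$ centered at $m(\textbf{y})=\textbf{y}+(t-T_k)\textbf{F}^{\tilde{\xi}_k}(\textbf{y})$, with $\textbf{F}^{\tilde{\xi}_k}=\textbf{b}+\textbf{K}^{\tilde{\xi}_k}$ and covariance $2\sigma(t-T_k)\textbf{I}_{Nd}$. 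The naive Jensen bound $\mathbb{E}|\mathbb{E}[\,\cdot\mid\textbf{x}]|^2\le\mathbb{E}|\tilde{\textbf{K}}\cdot(\textbf{x}-\textbf{y})|^2$ only gives $O(\tau N)$, because $\textbf{x}-\textbf{y}$ contains a Brownian increment of size $\sqrt{\tau}$; the whole point is that conditioning on the endpoint suppresses that leading $\sqrt\tau$ fluctuation, and integration by parts is what makes this quantitative.

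First I would reuse verbatim the decomposition $\textbf{x}-\textbf{y}=-a_1+a_2+a_3$ from the proof of Lemma~\ref{remainb} (with the batch $\xi_k$ there replaced by $\tilde{\xi}_k$), where $a_3=(t-T_k)\textbf{F}^{\tilde{\xi}_k}(\textbf{y})$ is pure drift, $a_2=(t-T_k)\big(\nabla\textbf{b}+\nabla\textbf{K}^{\tilde{\xi}_k}\big)(\textbf{y})\,(\textbf{x}-m(\textbf{y}))$, and $a_1=-\nabla m(\textbf{y})\,(\textbf{x}-m(\textbf{y}))$ is the term that couples to the Gaussian. Setting $J_\ell(\textbf{x})=\mathbb{E}[\tilde{\textbf{K}}(\textbf{y})\cdot a_\ell\mid \bar{\textbf{Y}}_t=\textbf{x}]$, it suffices by $|{-}J_1+J_2+J_3|^2\le 3(J_1^2+J_2^2+J_3^2)$ to estimate $\mathbb{E}[J_\ell^2]$ for $\ell=1,2,3$ separately.

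The delicate term is $J_1$. Here I would use the Gaussian identity $(\textbf{x}-m(\textbf{y}))\,G=2\sigma(t-T_k)\big(\nabla m(\textbf{y})^{T}\big)^{-1}\nabla_{\textbf{y}}G$, which converts $\tilde{\textbf{K}}(\textbf{y})\cdot a_1\,G$ into $-2\sigma(t-T_k)\,\textbf{v}(\textbf{y})^{T}\nabla_{\textbf{y}}G$ with $\textbf{v}=\nabla m\,(\nabla m^{T})^{-1}\tilde{\textbf{K}}$, a uniformly-bounded-matrix multiple of $\tilde{\textbf{K}}$ by the Jacobian bounds \eqref{suppsi} in the bi-Lipschitz regime $\tau<1/(r+L)$. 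Integrating by parts in $\textbf{y}$ against $\bar{\rho}_{T_k}^N$ and applying Bayes' law again yields $J_1(\textbf{x})=-2\sigma(t-T_k)\,\mathbb{E}\big[\nabla_{\textbf{y}}\!\cdot\textbf{v}+\textbf{v}\cdot\nabla_{\textbf{y}}\log\bar{\rho}_{T_k}^N\mid\textbf{x}\big]$; after Jensen the second piece produces the Fisher information $\mathcal{I}(\bar{\rho}_{T_k}^N)$, while $\nabla_{\textbf{y}}\!\cdot\textbf{v}$ is controlled by the Lipschitz bound on $K^{\tilde{\xi}_k}$ in Assumption~\ref{assume}(a) together with the Hessian growth in (c) and the moment control of Lemma~\ref{moment}. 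The per-particle boundedness $\|K^{\xi}-F\|_{L^\infty}$ of Assumption~\ref{assume}(d) is precisely what allows $\tilde{\textbf{K}}$ to be carried through, giving $\mathbb{E}[J_1^2]\lesssim\tau^2\big(N+\mathcal{I}(\bar{\rho}_{T_k}^N)\big)$.

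For $J_2$ and $J_3$ no integration by parts is needed: $a_3$ is already $O(t-T_k)$, so Jensen reduces $\mathbb{E}[J_3^2]$ to $(t-T_k)^2\,\mathbb{E}\big|\tilde{\textbf{K}}(\textbf{y})\cdot\textbf{F}^{\tilde{\xi}_k}(\textbf{y})\big|^2$, controlled using the boundedness of $\tilde{\textbf{K}}$ (Assumption~\ref{assume}(d)) and the uniform moment bound of Lemma~\ref{moment} applied to $\textbf{b}+\textbf{K}^{\tilde{\xi}_k}$; the factor $\textbf{x}-m(\textbf{y})$ inside $a_2$ is a Brownian increment with second moment $O(t-T_k)$, so $J_2$ carries an extra half power of $\tau$ and is estimated in the same way. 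Collecting the three pieces and multiplying by $\tfrac{1}{2\sigma}$ gives \eqref{k1}. I expect the main obstacle to be the bookkeeping of the powers of $N$: since $|\tilde{\textbf{K}}|^2=\sum_i|\tilde K_i|^2$ is only $O(N)$, the estimates of the $\textbf{v}\cdot\nabla_{\textbf{y}}\log\bar{\rho}_{T_k}^N$ and divergence terms in $J_1$, and of the drift term $J_3$, must be organized so that the bounded per-particle size from Assumption~\ref{assume}(d) keeps every contribution linear in $N$ rather than quadratic; confirming that the conditioning genuinely pushes the $\sqrt\tau$ Brownian scale down to the claimed $\tau$ is the crux of the whole estimate.
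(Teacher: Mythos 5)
Your proposal is correct and follows essentially the same route as the paper's proof: the same Bayes reformulation of the inner integral, the same three-term splitting of $\textbf{x}-\textbf{y}$ imported from Lemma \ref{remainb} with $\xi_k$ replaced by $\tilde{\xi}_k$, the same Gaussian integration by parts on the leading term to produce $\nabla_{\textbf{y}}\cdot\tilde{\textbf{K}}$ plus the Fisher-information contribution $\tilde{\textbf{K}}\cdot\nabla\log\bar{\rho}^{N}_{T_k}$, and the same Jensen-plus-moment treatment of the two drift terms via Assumption \ref{assume}(d) and Lemma \ref{moment}. The only deviations are cosmetic---your identity routes through $\left(\nabla m^{T}\right)^{-1}$ and the bounded matrix $\textbf{v}$, whereas the paper builds the Jacobian factor $\textbf{I}_{Nd}+(t-T_k)\nabla\textbf{F}^{\tilde{\xi}_k}$ directly into $\bar{a}_1$ so no inverse appears after integration by parts---and the linear-in-$N$ bookkeeping you flag as the crux is not argued further in the paper either, which simply asserts the $O(N\tau^2)$ bounds for $\bar{I}_2$ and $\bar{I}_3$.
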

\begin{proof}
We spilt the term $K_1$ into three parts:
    $$
    \begin{aligned}
        \tilde{\textbf{K}}(\textbf{y})\left(\textbf{x}-\textbf{y}\right) = & \tilde{\textbf{K}}(\textbf{y}) \cdot \left( \textbf{I}_{Nd} + \left(t-T_k\right)\nabla \textbf{F}^{\tilde{\xi}_k}(\textbf{y}) \right) \cdot\left( \textbf{y}-\textbf{x}+\left(t-T_k\right)\textbf{F}^{\tilde{\xi}_k}(\textbf{y})\right)\\
        & -  \left(t-T_k\right) \tilde{\textbf{K}}(\textbf{y}) \cdot\nabla\textbf{F}^{\tilde{\xi}_k}(\textbf{y})\cdot\left( \textbf{y}-\textbf{x}+\left(t-T_k\right)\textbf{F}^{\tilde{\xi}_k}(\textbf{y})\right)\\
        & -  \left(t-T_k\right) \tilde{\textbf{K}}(\textbf{y}) \cdot \textbf{F}^{\tilde{\xi}_k}(\textbf{y})\\
        := & \bar{a}_1(\textbf{x},\textbf{y})-\bar{a}_2(\textbf{x},\textbf{y})-\bar{a}_3(\textbf{x},\textbf{y}),
    \end{aligned}
    $$
    where $\textbf{F}^{\tilde{\xi}_k}(\textbf{y})=\textbf{b}(\textbf{y}) +  \textbf{K}^{\tilde{\xi}_k}(\textbf{y})$. 
    Define 
    $$
    \bar{I}_j(\textbf{x}) := \mathbb{E} \left[\bar{a}_j(\bar{\textbf{X}}_{T_k},\bar{\textbf{Y}}_t)\mid\bar{\textbf{Y}}_t = \textbf{x}, \xi_k, \tilde{\xi}_k\right], \quad j = 1, 2, 3.
    $$
    Then
    \begin{equation}
         \int_{\mathbb{R}^{Nd}} \bar{\rho}_t^{N, \tilde{\xi}_k}(\textbf{x})\left| \int_{\mathbb{R}^{Nd}} K_1 P\left(\bar{\textbf{X}}_{T_k}=\textbf{y} \mid \bar{\textbf{Y}}_t=\textbf{x}, \xi_k, \tilde{\xi}_k\right) d \textbf{y}\right|^2 d \textbf{x} = \frac{1}{2\sigma} \mathbb{E}\left[\left|\bar{I}_1(\bar{\textbf{Y}}_t)-\bar{I}_2(\bar{\textbf{Y}}_t)-\bar{I}_3(\bar{\textbf{Y}}_t)\right|^2 \mid \xi_k, \tilde{\xi}_k\right].
    \end{equation}
For the first term $ \bar{I}_1$, using Bayes' formula and integration by parts, since $P\left( \bar{\textbf{Y}}_t = \textbf{x}\mid \bar{\textbf{X}}_{T_k}=\textbf{y}, \xi_k, \tilde{\xi}_k\right)$ is Gaussian, we have
$$
\begin{aligned}
 & \bar{I}_1(\textbf{x}) \\
= &  \int_{\mathbb{R}^{Nd}} \tilde{\textbf{K}}(\textbf{y}) \cdot \left( \textbf{I}_{Nd} + \left(t-T_k\right)\nabla \textbf{F}^{\tilde{\xi}_k}(\textbf{y}) \right) \cdot\left( \textbf{y}-\textbf{x}+\left(t-T_k\right)\textbf{F}^{\tilde{\xi}_k}(\textbf{y})\right) P\left( \bar{\textbf{X}}_{T_k}=\textbf{y} \mid \bar{\textbf{Y}}_t = \textbf{x}, \xi_k, \tilde{\xi}_k\right) d \textbf{y}\\
  = & \int_{\mathbb{R}^{Nd}} \tilde{\textbf{K}}(\textbf{y}) \cdot \left( \textbf{I}_{Nd} + \left(t-T_k\right)\nabla \textbf{F}^{\tilde{\xi}_k}(\textbf{y}) \right) \cdot\left( \textbf{y}-\textbf{x}+\left(t-T_k\right)\textbf{F}^{\tilde{\xi}_k}(\textbf{y})\right) \frac{\bar{\rho}^{N, \xi_k}_{T_k}(\textbf{y})}{\bar{\rho}^{N, \tilde{\xi}_k}_{t}(\textbf{x})}P\left( \bar{\textbf{Y}}_t = \textbf{x} \mid \bar{\textbf{X}}_{T_k}=\textbf{y}\right) d \textbf{y}\\
  = & 2\sigma\left(t-T_k\right) \int_{\mathbb{R}^{Nd}} \frac{\nabla_{\textbf{y}}\left( \tilde{\textbf{K}}(\textbf{y})\bar{\rho}^{N, \xi_k}_{T_k}(\textbf{y})\right)}{\bar{\rho}^{N, \tilde{\xi}_k}_{t}(\textbf{x})}P\left( \bar{\textbf{Y}}_t = \textbf{x} \mid \bar{\textbf{X}}_{T_k}=\textbf{y}\right) d \textbf{y}\\
  = & 2\sigma\left(t-T_k\right) \int_{\mathbb{R}^{Nd}} \left( \nabla \tilde{\textbf{K}}(\textbf{y}) + \tilde{\textbf{K}}(\textbf{y}) \frac{\nabla\bar{\rho}^{N, \xi_k}_{T_k}(\textbf{y})}{\bar{\rho}^{N, \xi_k}_{T_k}(\textbf{y})}\right) P\left( \bar{\textbf{Y}}_t = \textbf{x} \mid \bar{\textbf{X}}_{T_k}=\textbf{y}\right) d \textbf{y}.
  \end{aligned}
$$
Since $K^{\xi_k}$ is uniformly bounded by Assumption \ref{assume}, then $\tilde{\textbf{K}}(\textbf{y})$ and $\nabla \tilde{\textbf{K}}(\textbf{y})$ are also uniformly bounded, hence
\begin{equation}\label{bara1}
    \mathbb{E}\left[\left|\bar{I}_1(\bar{\textbf{Y}}_t)\right|^2 \mid \xi_k, \tilde{\xi}_k \right] \leq c \left(t-T_k\right)^2 \left(1+ \mathcal{I}\left(\bar{\rho}^{N}_{T_k}\right)\right),
\end{equation}
and the constant $c$ is independent of $N$, $\xi_k$, $\tilde{\xi}_k$.\\
For the second term $ \bar{I}_2$, by Jensen's inequality, it holds that
$$
\mathbb{E}\left[\left|\bar{I}_2(\bar{\textbf{Y}}_t)\right|^2 \mid \xi_k, \tilde{\xi}_k \right] \leq \left(t-T_k\right)^2 \mathbb{E}\left[\left|\tilde{\textbf{K}}(\bar{\textbf{X}}_{T_k})\cdot\nabla \textbf{F}^{\tilde{\xi}_k}(\bar{\textbf{X}}_{T_k})\cdot \int_{T_k}^{t} d \textbf{W}_s\right|^2 \mid \xi_k, \tilde{\xi}_k\right].
$$ 
By Assumption \ref{assume}, since $\tilde{\textbf{K}}$ is bounded and $\nabla \textbf{F}^{\tilde{\xi}_k}$ has polynomial growth, then by the moment control Lemma \ref{moment}, we can obtain
\begin{equation}\label{bara2}
    \mathbb{E}\left[\left|\bar{I}_2(\bar{\textbf{Y}}_t)\right|^2 \mid \xi_k, \tilde{\xi}_k \right] \leq c N\left(t-T_k\right)^2,
\end{equation}
and the constant $c$ is independent of $N$, $\xi_k$, $\tilde{\xi}_k$.\\
For the third term $ \bar{I}_3$, by Jensen's inequality, it holds that
$$
\mathbb{E}\left[\left|\bar{I}_3(\bar{\textbf{Y}}_t)\right|^2 \mid \xi_k, \tilde{\xi}_k \right] \leq \left(t-T_k\right)^2 \mathbb{E}\left[\left|\tilde{\textbf{K}}(\bar{\textbf{X}}_{T_k})\cdot \textbf{F}^{\tilde{\xi}_k}(\bar{\textbf{X}}_{T_k})\right|^2 \mid \xi_k, \tilde{\xi}_k\right].
$$ 
By Assumption \ref{assume}, since $\tilde{\textbf{K}}$ is bounded and $\textbf{F}^{\tilde{\xi}_k}$ has polynomial growth, then by moment control Lemma \ref{moment}, we can obtain
\begin{equation}\label{bara3}
    \mathbb{E}\left[\left|\bar{I}_3(\bar{\textbf{Y}}_t)\right|^2 \mid \xi_k, \tilde{\xi}_k \right] \leq c N\left(t-T_k\right)^2,
\end{equation}
and the constant $c$ is independent of $N$, $\xi_k$, $\tilde{\xi}_k$. Finally, combining (\ref{bara1}), (\ref{bara2}) and (\ref{bara3}), we get 
\begin{equation}\label{k1}
    \int_{\mathbb{R}^{Nd}} \bar{\rho}_t^{N, \tilde{\xi}_k}(\textbf{x})\left|\int_{\mathbb{R}^{Nd}} K_1 P\left(\bar{\textbf{X}}_{T_k}=\textbf{y} \mid \bar{\textbf{Y}}_t=\textbf{x}, \xi_k, \tilde{\xi}_k\right) d \textbf{y}\right|^2 d \textbf{x} \leq c \tau^2\left(N+\mathcal{I}\left(\bar{\rho}^{N}_{T_k}\right)\right).
\end{equation}
\end{proof}

\begin{lemma}\label{remainK2}
Recall the notations
\begin{equation}
  K_2 = \left(-\sqrt{\frac{1}{2\sigma}} \tilde{\textbf{K}}(\textbf{y})\textbf{F}^{\tilde{\xi}_k}(\textbf{y})- \frac{1}{2}|\tilde{\textbf{K}}(\textbf{y})|^2\right)\left(t-T_k\right).
\end{equation}
Then under the setting of Theorem \ref{thm}, there exists a positive constant $c$ independent of $\xi_k$, $\tilde{\xi}_k$, $N$ and $t$ such for $t\in\left[T_k, T_{k+1}\right)$, 
    \begin{equation}\label{k2}
    \int_{\mathbb{R}^{Nd}} \bar{\rho}_t^{N, \tilde{\xi}_k}(\textbf{x})\left|\int_{\mathbb{R}^{Nd}} K_2 P\left(\bar{\textbf{X}}_{T_k}=\textbf{y} \mid \bar{\textbf{Y}}_t=\textbf{x}, \xi_k, \tilde{\xi}_k\right) d \textbf{y}\right|^2 d \textbf{x} \leq c N\tau^2.
\end{equation}
\end{lemma}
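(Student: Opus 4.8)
The plan is to exploit the fact that, unlike the term $K_1$, the integrand $K_2$ is a \emph{deterministic} function of $\textbf{y}=\bar{\textbf{X}}_{T_k}$ alone and already carries an explicit factor $(t-T_k)$; hence no integration by parts (and no Fisher information) is needed, and the whole estimate reduces to a moment bound. First I would rewrite the inner integral as a conditional expectation,
$$
\int_{\mathbb{R}^{Nd}} K_2\, P\left(\bar{\textbf{X}}_{T_k}=\textbf{y}\mid \bar{\textbf{Y}}_t=\textbf{x},\xi_k,\tilde{\xi}_k\right)d\textbf{y}=\mathbb{E}\left[K_2\left(\bar{\textbf{X}}_{T_k}\right)\mid \bar{\textbf{Y}}_t=\textbf{x},\xi_k,\tilde{\xi}_k\right],
$$
and apply the conditional Jensen inequality to pull the square inside. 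Integrating the resulting bound against the law $\bar{\rho}_t^{N,\tilde{\xi}_k}$ of $\bar{\textbf{Y}}_t$ and using the tower property then collapses the left-hand side to an unconditional second moment,
$$
\int_{\mathbb{R}^{Nd}} \bar{\rho}_t^{N, \tilde{\xi}_k}(\textbf{x})\left|\int_{\mathbb{R}^{Nd}} K_2\, P\left(\bar{\textbf{X}}_{T_k}=\textbf{y}\mid \bar{\textbf{Y}}_t=\textbf{x},\xi_k,\tilde{\xi}_k\right)d\textbf{y}\right|^2 d\textbf{x}\le \mathbb{E}\left[\left|K_2\left(\bar{\textbf{X}}_{T_k}\right)\right|^2\mid \xi_k,\tilde{\xi}_k\right],
$$
since $\bar{\textbf{X}}_{T_k}=\bar{\textbf{Y}}_{T_k}$ carries the law $\bar{\rho}_{T_k}^N$ under either batch.

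Next I would extract the $\tau^2$ factor straight from the definition. Writing $K_2=(t-T_k)\bigl(-\tfrac{1}{\sqrt{2\sigma}}\tilde{\textbf{K}}\cdot\textbf{F}^{\tilde{\xi}_k}-\tfrac12|\tilde{\textbf{K}}|^2\bigr)$ and bounding $(t-T_k)^2\le\tau^2$, the elementary inequality $(a+b)^2\le 2a^2+2b^2$ reduces the problem to controlling
$$
\mathbb{E}\left[\left|\tilde{\textbf{K}}\left(\bar{\textbf{X}}_{T_k}\right)\cdot\textbf{F}^{\tilde{\xi}_k}\left(\bar{\textbf{X}}_{T_k}\right)\right|^2\mid \xi_k,\tilde{\xi}_k\right]\quad\text{and}\quad \mathbb{E}\left[\left|\tilde{\textbf{K}}\left(\bar{\textbf{X}}_{T_k}\right)\right|^4\mid \xi_k,\tilde{\xi}_k\right].
$$
Here I would invoke the two structural inputs already in hand: by Assumption \ref{assume}(d) each $d$-dimensional block of $\tilde{\textbf{K}}=\tfrac{1}{\sqrt{2\sigma}}(\textbf{K}^{\tilde{\xi}_k}-\textbf{K}^{\xi_k})$ is uniformly bounded, since it equals $(\textbf{K}^{\tilde{\xi}_k}-\textbf{F})-(\textbf{K}^{\xi_k}-\textbf{F})$; and by Assumption \ref{assume}(a) each block of $\textbf{F}^{\tilde{\xi}_k}=\textbf{b}+\textbf{K}^{\tilde{\xi}_k}$ has at most linear growth, so its conditional second moment is controlled uniformly in time by the moment bound of Lemma \ref{moment}. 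This is precisely the mechanism used for the terms $\bar{I}_2,\bar{I}_3$ in Lemma \ref{remainK1}.

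The only genuinely delicate point — and where I would concentrate the bookkeeping — is the dependence on $N$: since $\tilde{\textbf{K}}$ and $\textbf{F}^{\tilde{\xi}_k}$ are $Nd$-dimensional, one must verify carefully that assembling the per-particle ($O(1)$) contributions over the $N$ particles produces the claimed factor, rather than a higher power of $N$. Concretely I would track both quantities as sums $\sum_{i=1}^N$ of blockwise terms, estimate each block using the uniform bound on $\tilde{\textbf{K}}_i$ together with the uniform moment bound $\sup_t\mathbb{E}[|\bar{X}^i_{T_k}|^p\mid\mathcal{F}_{\xi}]\le C_p$, and collect the resulting sum into the $N$-factor; combined with the $\tau^2$ prefactor this gives the stated bound with a constant independent of $\xi_k,\tilde{\xi}_k,N$ and $t$. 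I expect no analytical difficulty beyond this scaling accounting, which is exactly why this lemma is substantially simpler than the estimate for $K_1$: there is no Gaussian integration-by-parts step, and hence no Fisher-information term $\mathcal{I}(\bar{\rho}^N_{T_k})$ appears.
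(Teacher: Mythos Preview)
Your proposal is correct and follows essentially the same route as the paper: apply Jensen's inequality to the conditional expectation, integrate against $\bar{\rho}_t^{N,\tilde{\xi}_k}$ to reduce to the unconditional moment $\mathbb{E}\bigl[|K_2(\bar{\textbf{X}}_{T_k})|^2\mid \xi_k,\tilde{\xi}_k\bigr]$, extract the $(t-T_k)^2$ prefactor, and then control the remaining factor using the uniform boundedness of $\tilde{\textbf{K}}$ together with the growth of $\textbf{F}^{\tilde{\xi}_k}$ and the moment control Lemma~\ref{moment}. Your explicit flagging of the per-particle summation that produces the $N$ factor is, if anything, slightly more careful than the paper's presentation, which simply asserts the $cN\tau^2$ bound at that point.
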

\begin{proof}
    For $K_2$, by Jensen's inequality, 
$$
\begin{aligned}
    & \int \bar{\rho}_t^{N, \tilde{\xi}_k}(\textbf{x})\left|\int K_2 P\left(\bar{\textbf{X}}_{T_k}=\textbf{y} \mid \bar{\textbf{Y}}_t=\textbf{x}, \xi_k, \tilde{\xi}_k\right) d \textbf{y}\right|^2 d \textbf{x} \\
     \leq & \int \bar{\rho}_t^{N, \tilde{\xi}_k}(\textbf{x}) \int \left|K_2\right|^2 P\left(\bar{\textbf{X}}_{T_k}=\textbf{y} \mid \bar{\textbf{Y}}_t=\textbf{x}, \xi_k, \tilde{\xi}_k\right) d \textbf{y} d \textbf{x}\\
     = & \left(t-T_k\right)^2 \mathbb{E}\left[\left|\left(\sqrt{\frac{1}{2\sigma}} \tilde{\textbf{K}}(\textbf{y})\textbf{F}^{\tilde{\xi}_k}(\textbf{y})+ \frac{1}{2}|\tilde{\textbf{K}}(\textbf{y})|^2\right)\right|^2 \mid \xi_k, \tilde{\xi}_k\right].
\end{aligned}
$$
By Assumption \ref{assume}, since $\tilde{\textbf{K}}$ is bounded and $\textbf{F}^{\tilde{\xi}_k}$ has polynomial growth, then by moment control Lemma \ref{moment}, we can obtain
\begin{equation}\label{k2}
    \int \bar{\rho}_t^{N, \tilde{\xi}_k}(\textbf{x})\left|\int K_2 P\left(\bar{\textbf{X}}_{T_k}=\textbf{y} \mid \bar{\textbf{Y}}_t=\textbf{x}, \xi_k, \tilde{\xi}_k\right) d \textbf{y}\right|^2 d \textbf{x} \leq cN \tau^2,
\end{equation}
and the constant $c$ is independent of $N$, $\xi_k$, $\tilde{\xi}_k$.
\end{proof}

\begin{lemma}\label{remainK3}
    Recall the notations
\begin{equation}
  K_3 = \left(e^{\textbf{z}}-\textbf{z}-1\right),
\end{equation}
where $$\textbf{z} = \sqrt{\frac{1}{2\sigma}} \tilde{\textbf{K}}(\textbf{y})\left(\textbf{x}-\textbf{y}-\left(t-T_k\right) \textbf{F}^{\tilde{\xi}_k}(\textbf{y})\right)-\frac{1}{2}|\tilde{\textbf{K}}(\textbf{y})|^2\left(t-T_k\right).$$
Then under the setting of Theorem \ref{thm}, there exists a positive constant $c$ independent of $\xi_k$, $\tilde{\xi}_k$, $N$ and $t$ such for $t\in\left[T_k, T_{k+1}\right)$, 
    \begin{equation}\label{k3}
    \int_{\mathbb{R}^{Nd}} \bar{\rho}_t^{N, \tilde{\xi}_k}(\textbf{x})\left|\int_{\mathbb{R}^{Nd}} K_3 P\left(\bar{\textbf{X}}_{T_k}=\textbf{y} \mid \bar{\textbf{Y}}_t=\textbf{x}, \xi_k, \tilde{\xi}_k\right) d \textbf{y}\right|^2 d \textbf{x} \leq c N\tau^2.
\end{equation}
\end{lemma}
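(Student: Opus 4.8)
The plan is to follow the template of Lemma~\ref{remainK2}, the new ingredient being an elementary control of the remainder of the exponential. First I would remove the inner conditional average by Jensen's inequality: since $P(\bar{\textbf{X}}_{T_k}=\textbf{y}\mid\bar{\textbf{Y}}_t=\textbf{x},\xi_k,\tilde{\xi}_k)$ is a probability density in $\textbf{y}$, one has
\[
\int_{\mathbb{R}^{Nd}} \bar{\rho}_t^{N,\tilde{\xi}_k}(\textbf{x})\left|\int_{\mathbb{R}^{Nd}} K_3\, P(\bar{\textbf{X}}_{T_k}=\textbf{y}\mid\bar{\textbf{Y}}_t=\textbf{x},\xi_k,\tilde{\xi}_k)\,d\textbf{y}\right|^2 d\textbf{x} \le \mathbb{E}\left[\left|K_3\right|^2\mid\xi_k,\tilde{\xi}_k\right],
\]
so it suffices to bound the second moment of $K_3=e^{\textbf{z}}-\textbf{z}-1$, conditioned on the two batches.

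Next, recalling that $\bar{\textbf{Y}}_t=\bar{\textbf{X}}_{T_k}+(t-T_k)\textbf{F}^{\tilde{\xi}_k}(\bar{\textbf{X}}_{T_k})+\sqrt{2\sigma}(\textbf{W}_t-\textbf{W}_{T_k})$, the exponent collapses to
\[
\textbf{z}=\tilde{\textbf{K}}(\bar{\textbf{X}}_{T_k})\cdot(\textbf{W}_t-\textbf{W}_{T_k})-\tfrac12\left|\tilde{\textbf{K}}(\bar{\textbf{X}}_{T_k})\right|^2(t-T_k),
\]
which, conditioned on $\mathcal{F}_{T_k}$, is Gaussian with variance $|\tilde{\textbf{K}}|^2(t-T_k)$. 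I would then invoke the elementary bound $|e^{z}-1-z|\le\tfrac12 z^2 e^{|z|}$, valid for all real $z$, so that $|K_3|^2\le\tfrac14\textbf{z}^4 e^{2|\textbf{z}|}$. Since $\textbf{z}$ is conditionally Gaussian and $\tilde{\textbf{K}}$ is uniformly bounded by Assumption~\ref{assume}(d), all required moments of $\textbf{z}$ and the exponential moment $\mathbb{E}[e^{2|\textbf{z}|}\mid\mathcal{F}_{T_k}]$ are finite; the quartic factor $\textbf{z}^4$ contributes the order $(t-T_k)^2$, while the dependence on $N$ is carried by $|\tilde{\textbf{K}}|^2$ and the moment bounds of Lemma~\ref{moment}. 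Taking the expectation over $\mathcal{F}_{T_k}$ then produces a bound of the form $c N\tau^2$, with $c$ independent of $N$, $\xi_k$, $\tilde{\xi}_k$ and $t$.

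The main obstacle is the nonlinearity of $K_3$: unlike $K_1$ and $K_2$, which are affine in the noise and are handled by integration by parts and plain moment control, the exponential $e^{\textbf{z}}$ cannot be treated termwise, and the quadratic nature of $e^z-1-z$ is what one must exploit to recover the $\tau^2$ rate. The delicate point is therefore the exponential weight $e^{|\textbf{z}|}$, whose moments must stay controlled uniformly in $N$ and in the batches; this is exactly where the uniform boundedness of $K^{\xi}-F$ in Assumption~\ref{assume}(d) is essential, since it keeps the Gaussian moment generating function $\mathbb{E}[e^{a|\textbf{z}|}\mid\mathcal{F}_{T_k}]$ finite and prevents the exponential factor from spoiling the $\tau^2$ scaling extracted from $\textbf{z}^4$. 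Once this exponential integrability is secured, the estimate closes exactly as in Lemma~\ref{remainK2}.
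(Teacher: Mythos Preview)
Your first two moves---Jensen to remove the inner average and the identification of $\textbf{z}$ as conditionally Gaussian with variance $|\tilde{\textbf{K}}(\bar{\textbf{X}}_{T_k})|^2(t-T_k)$---match the paper exactly. The divergence is in how $\mathbb{E}\bigl[(e^{\textbf{z}}-\textbf{z}-1)^2\bigr]$ is then controlled: the paper applies It\^o's formula to the process $\bar{\textbf{Z}}_s=e^{\hat{\textbf{Y}}_s}-1-\hat{\textbf{Y}}_s$, obtains an integral identity for $\mathbb{E}[|\bar{\textbf{Z}}_t|^2]$, and closes via Gronwall, whereas you reach for the pointwise inequality $|e^z-1-z|\le\tfrac12 z^2 e^{|z|}$ followed by a direct Gaussian moment computation.

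The $N$-accounting is where your argument breaks. Assumption~\ref{assume}(d) bounds each of the $N$ blocks of $\tilde{\textbf{K}}\in\mathbb{R}^{Nd}$ individually; the squared Euclidean norm
\[
|\tilde{\textbf{K}}|^2=\frac{1}{2\sigma}\sum_{i=1}^N\bigl|K^{\tilde{\xi}_k}(y_i)-K^{\xi_k}(y_i)\bigr|^2
\]
is therefore of order $N$, not of order $1$. This has two consequences for your scheme. First, the conditional variance of $\textbf{z}$ is $|\tilde{\textbf{K}}|^2(t-T_k)\sim N\tau$, so $\mathbb{E}[\textbf{z}^4\mid\mathcal{F}_{T_k}]\sim (N\tau)^2$: your claim that ``the quartic factor contributes $(t-T_k)^2$'' already hides an extra power of $N$. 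Second, and more seriously, the Gaussian moment generating function gives $\mathbb{E}[e^{a|\textbf{z}|}\mid\mathcal{F}_{T_k}]$ of order $e^{c\,a^2 N\tau}$, which is not bounded uniformly in $N$ for fixed $\tau\in(0,\Delta)$. Once you separate $\textbf{z}^4$ from $e^{2|\textbf{z}|}$ by Cauchy--Schwarz or H\"older, this exponential-in-$N$ factor survives and the resulting bound is of order $(N\tau)^2 e^{cN\tau}$, not $cN\tau^2$ with $c$ independent of $N$. The uniform bound on $K^\xi-F$ controls each coordinate but not the full $Nd$-dimensional norm entering the exponent, so the ``exponential integrability'' you invoke is exactly the step that fails. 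The paper's It\^o/Gronwall route is chosen to avoid ever introducing the crude weight $e^{|\textbf{z}|}$.
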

\begin{proof}
For $K_3$, by Jensen's inequality, 
$$
\begin{aligned}
    & \int_{\mathbb{R}^{Nd}} \bar{\rho}_t^{N, \tilde{\xi}_k}(\textbf{x})\left|\int_{\mathbb{R}^{Nd}} K_3 P\left(\bar{\textbf{X}}_{T_k}=\textbf{y} \mid \bar{\textbf{Y}}_t=\textbf{x}, \xi_k, \tilde{\xi}_k\right) d \textbf{y}\right|^2 d \textbf{x} \\
     \leq & \int_{\mathbb{R}^{Nd}} \bar{\rho}_t^{N, \tilde{\xi}_k}(\textbf{x}) \int_{\mathbb{R}^{Nd}} \left|K_3\right|^2 P\left(\bar{\textbf{X}}_{T_k}=\textbf{y} \mid \bar{\textbf{Y}}_t=\textbf{x}, \xi_k, \tilde{\xi}_k\right) d \textbf{y} d \textbf{x}\\
     = & \mathbb{E}\left[\left|e^{\hat{\textbf{Y}}_t}-1 - \hat{\textbf{Y}}_t\right|^2 \mid \xi_k, \tilde{\xi}_k\right],
\end{aligned}
$$
where we denote the process
$$
\begin{aligned}
    \hat{\textbf{Y}}_t := & \sqrt{\frac{1}{2\sigma}} \tilde{\textbf{K}}(\bar{\textbf{X}}_{T_k})\cdot \left(\bar{\textbf{Y}}_{t}-\bar{\textbf{X}}_{T_k}-\left(t-T_k\right) \textbf{F}^{\tilde{\xi}_k}(\bar{\textbf{X}}_{T_k})\right)-\frac{1}{2}|\tilde{\textbf{K}}(\bar{\textbf{X}}_{T_k})|^2\left(t-T_k\right)\\
    = & -\frac{1}{2}|\tilde{\textbf{K}}(\bar{\textbf{X}}_{T_k})|^2\left(t-T_k\right) +  \tilde{\textbf{K}}(\bar{\textbf{X}}_{T_k}) \cdot \int_{T_k}^t d \textbf{W}_s.
    \end{aligned}
$$
Denote $\bar{\textbf{Z}}_t = e^{\hat{\textbf{Y}}_t}-1 - \hat{\textbf{Y}}_t$, then by It\^o's formula,
$$
\bar{\textbf{Z}}_t = \frac{1}{2} \left|\tilde{\textbf{K}}\right|^2 \left(t-T_k\right) + \tilde{\textbf{K}} \cdot \int_{T_k}^t \left(e^{\hat{\textbf{Y}}_s}-1  \right) d\textbf{W}_s = \frac{1}{2} \left|\tilde{\textbf{K}}\right|^2 \left(t-T_k\right) + \tilde{\textbf{K}} \cdot \int_{T_k}^t \left(\bar{\textbf{Z}}_t+\hat{\textbf{Y}}_s  \right) d\textbf{W}_s.
$$
So
$$
\mathbb{E}\left[\left|\bar{\textbf{Z}}_t\right|^2 \mid \xi_k, \tilde{\xi}_k\right] = \frac{1}{4} \mathbb{E}\left[ \left|\tilde{\textbf{K}}\right|^2 \mid \xi_k, \tilde{\xi}_k\right] + \int_{T_k}^t \mathbb{E}\left[\left|\tilde{\textbf{K}}\right|^2  \left(\bar{\textbf{Z}}_t+\hat{\textbf{Y}}_s \right)^2\mid \xi_k, \tilde{\xi}_k\right] ds.
$$
By Assumption \ref{assume}, since $\tilde{\textbf{K}}$ is uniformly bounded, then $\mathbb{E}\left[\left|\hat{\textbf{Y}}_t\right|^2 \mid \xi_k, \tilde{\xi}_k\right] \leq c N\left(t-T_k\right)^2.$
Thus
$$
\mathbb{E}\left[\left|\bar{\textbf{Z}}_t\right|^2 \mid \xi_k, \tilde{\xi}_k\right] \leq c \int_{T_k}^t \mathbb{E}\left[\left|\bar{\textbf{Z}}_s\right|^2 \mid \xi_k, \tilde{\xi}_k\right]  ds + cN\left(t-T_k\right)^2. 
$$
By Gronwall's inequality, 
\begin{equation}\label{k3}
\int_{\mathbb{R}^{Nd}} \bar{\rho}_t^{N, \tilde{\xi}_k}(\textbf{x})\left|\int_{\mathbb{R}^{Nd}} K_3 P\left(\bar{\textbf{X}}_{T_k}=\textbf{y} \mid \bar{\textbf{Y}}_t=\textbf{x}, \xi_k, \tilde{\xi}_k\right) d \textbf{y}\right|^2 d \textbf{x} \leq \mathbb{E}\left[\left|\bar{\textbf{Z}}_t\right|^2 \mid \xi_k, \tilde{\xi}_k\right] \leq cN\tau^2,
\end{equation}
and the constant $c$ is independent of $N$, $\xi_k$, $\tilde{\xi}_k$.
\end{proof}

\section{The Radon-Nikodym derivatives of path measures}

The following lemma describes the relationship between the two Radon-Nikodym derivatives (of path measures and of push forward measures):
\begin{lemma}[Lemma A.1, \cite{li2022sharp}]\label{path}
 Let $Q_1, Q_2$ be two probability distributions on $\mathcal{X}$, and $Q_2$ is absolutely continuous with respect to $Q_1$. Let $\phi: \mathcal{X} \rightarrow \mathbb{R}^d$ be a measurable mapping, and consider the push forward measure $\phi_{\#} Q_1$ and $\phi_{\#} Q_2$, denoted by $Q_{1, \phi}$ and $Q_{2, \phi}$, respectively. Then the Randon-Nikodym derivatives $\frac{d Q_{1, \phi}}{d Q_{2, \phi}} \in L^1\left(d Q_{2, \phi}, \mathbb{R}^d\right), \frac{d Q_1}{d Q_2} \in L^1\left(d Q_2, \mathcal{X}\right)$ are well-defined, and
$$
\frac{d Q_{1, \phi}}{d Q_{2, \phi}}(x)=\mathbb{E}_{X \sim Q_2}\left[\frac{d Q_1}{d Q_2} \mid \phi(X)=x\right], 
$$
\end{lemma}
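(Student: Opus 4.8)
The plan is to verify the claimed identity by the standard device of testing against bounded measurable functions on $\mathbb{R}^d$ and invoking the defining property of the pushforward measure together with the tower property of conditional expectation. Throughout, write $g:=\frac{dQ_1}{dQ_2}$ for the Radon--Nikodym derivative (so that $dQ_1 = g\,dQ_2$; note that for this derivative to exist one needs $Q_1\ll Q_2$, which is the direction of absolute continuity required to make the stated formula meaningful), and set
\begin{equation*}
\psi(x):=\mathbb{E}_{X\sim Q_2}\left[\,g(X)\mid \phi(X)=x\,\right],
\end{equation*}
the regular version of the conditional expectation, which is a Borel function of $x$ defined $Q_{2,\phi}$-almost everywhere.

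First I would dispose of the well-definedness assertions. Since $\int_{\mathcal X} g\,dQ_2 = Q_1(\mathcal X)=1$, we have $g\in L^1(dQ_2,\mathcal X)$. The inclusion $Q_{1,\phi}\ll Q_{2,\phi}$ is immediate: if $A\subset\mathbb{R}^d$ is Borel with $Q_{2,\phi}(A)=0$, then $Q_2(\phi^{-1}(A))=0$, and by $Q_1\ll Q_2$ also $Q_1(\phi^{-1}(A))=0$, i.e. $Q_{1,\phi}(A)=0$; hence $\frac{dQ_{1,\phi}}{dQ_{2,\phi}}$ exists. Finally $\psi\ge 0$ and $\int \psi\,dQ_{2,\phi}=\int g\,dQ_2=1$, so $\psi\in L^1(dQ_{2,\phi},\mathbb{R}^d)$.

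The core of the argument is a single chain of equalities. For any bounded Borel $h:\mathbb{R}^d\to\mathbb{R}$,
\begin{equation*}
\int_{\mathbb{R}^d} h\,dQ_{1,\phi}
=\int_{\mathcal X} h(\phi)\,dQ_1
=\int_{\mathcal X} h(\phi)\,g\,dQ_2
=\int_{\mathcal X} h(\phi)\,\mathbb{E}_{Q_2}\!\left[g\mid\phi\right]dQ_2
=\int_{\mathbb{R}^d} h\,\psi\,dQ_{2,\phi}.
\end{equation*}
Here the first and last equalities are the change-of-variables formula for pushforward measures; the second uses $dQ_1=g\,dQ_2$; and the third uses that $h(\phi)$ is $\sigma(\phi)$-measurable, so that replacing $g$ by $\mathbb{E}_{Q_2}[g\mid\phi]$ leaves the integral unchanged by the defining property of conditional expectation. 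Since this holds for every bounded Borel $h$, the function $\psi$ is a version of $\frac{dQ_{1,\phi}}{dQ_{2,\phi}}$ by the $Q_{2,\phi}$-a.e. uniqueness of the Radon--Nikodym derivative, which is exactly the asserted identity.

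There is no genuine analytic obstacle here; the result is elementary measure theory. The only points that warrant care are, first, the existence of a bona fide regular version of $x\mapsto\psi(x)$ as a Borel function on $\mathbb{R}^d$ (guaranteed because $\mathbb{R}^d$ is a standard Borel space, so conditional expectations given $\phi$ admit a measurable disintegration), and second, bookkeeping the correct direction of absolute continuity so that $g=\frac{dQ_1}{dQ_2}$ and the two pushforward derivatives all exist simultaneously. Once these are fixed, the test-function computation closes the proof.
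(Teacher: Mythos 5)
Your proof is correct and complete. Note that the paper itself contains no proof of Lemma \ref{path}: the statement is imported verbatim from Lemma A.1 of \cite{li2022sharp}, so there is no in-paper argument to compare against; your test-function computation --- change of variables for the pushforward, the substitution $dQ_1=g\,dQ_2$, the defining property of conditional expectation against the $\sigma(\phi)$-measurable integrand $h(\phi)$, and finally $Q_{2,\phi}$-a.e.\ uniqueness of the Radon--Nikodym derivative --- is precisely the standard argument, and is what the cited source carries out. You were also right to flag the hypothesis as misstated: with $Q_2\ll Q_1$ as written, the derivative $\frac{dQ_1}{dQ_2}$ appearing in the conclusion need not exist, and the intended assumption is $Q_1\ll Q_2$, from which your one-line deduction of $Q_{1,\phi}\ll Q_{2,\phi}$ and the integrability of $g$ and $\psi$ make all the objects in the identity well defined. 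Your remark on the existence of a Borel regular version of $x\mapsto\mathbb{E}_{Q_2}[g\mid\phi=x]$ is the only point of genuine care, and your justification via the factorization of $\sigma(\phi)$-measurable functions (Doob--Dynkin) suffices; in the paper's application $\mathcal{X}$ is a path space and $\phi=\omega_t$ a time projection, for which this applies without change.
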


\section{The Law of Large Numbers at exponential scale}

The following lemma describes the Law of Large Numbers at exponential scale which is appeared in \cite{jabin2018quantitative}:
\begin{lemma}\label{lln}
 Consider any $\rho \in L^1\left(\mathbb{R}^d\right)$ with $\rho \geq 0$ and $\int_{\mathbb{R}^d} \rho(x) \mathrm{d} x=1$. Assume that a scalar function $\psi \in L^{\infty}$ with $\|\psi\|_{L^{\infty}}<\frac{1}{2 e}$, and that for any fixed $z, \int_{\mathbb{R}^d} \psi(z, x) \rho(x) d x=0$, then
$$
\int_{\mathbb{R}^{Nd}} \rho^N \exp \left(\frac{\eta}{N} \sum_{j_1, j_2=1}^N \psi\left(x_1, x_{j_1}\right) \psi\left(x_1, x_{j_2}\right)\right) \mathrm{d} \textbf{x} \leq C,
$$
where $\rho^N\left(t, \textbf{x}\right)=\Pi_{i=1}^N \rho\left(t, x_i\right)$, $\eta$ is an arbitrary positive number and constant $C$ only depends on $\eta$ and $\|\psi\|_{L^{\infty}}$.
\end{lemma}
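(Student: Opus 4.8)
The plan is to follow the combinatorial argument of Jabin and Wang \cite{jabin2018quantitative}: complete the square, expand the exponential in a power series, and exploit the centering hypothesis $\int_{\mathbb{R}^d}\psi(z,x)\rho(x)\,dx=0$ to annihilate most terms in each moment, leaving only a controlled number that can be summed.

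First I would rewrite the quadratic form as a perfect square. Writing $S(\textbf{x}):=\sum_{j=1}^N \psi(x_1,x_j)$, one has $\sum_{j_1,j_2=1}^N \psi(x_1,x_{j_1})\psi(x_1,x_{j_2})=S(\textbf{x})^2$, so the integrand is $\exp\!\big(\tfrac{\eta}{N}S^2\big)$. Since $\psi(x_1,\cdot)$ has $\rho$-mean zero only after $x_1$ is frozen, I would condition on $x_1$: setting $g(\cdot):=\psi(x_1,\cdot)$, which is mean zero with $\|g\|_\infty\le\|\psi\|_\infty=:a$, it suffices to bound $\mathbb{E}\big[\exp(\tfrac{\eta}{N}S^2)\big]$ uniformly in $x_1$, the outer integration against $\rho(x_1)\,dx_1$ being a probability average since $\int\rho=1$. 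The diagonal contribution $g(x_1)=\psi(x_1,x_1)$ is a bounded constant and factors out a harmless $\exp(O(\eta a^2/N))$, so the real object is $T:=\sum_{j=2}^N g(X_j)$, a sum of $N-1$ i.i.d.\ centered variables (with $X_j\sim\rho$) bounded by $a$.

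Then I would expand $\exp(\tfrac{\eta}{N}T^2)=\sum_k \tfrac{\eta^k}{N^k k!}T^{2k}$ and estimate each even moment $\mathbb{E}[T^{2k}]=\sum_{f:[2k]\to\{2,\dots,N\}}\mathbb{E}\big[\prod_{l=1}^{2k}g(X_{f(l)})\big]$. Here the centering hypothesis is decisive: by independence, a term vanishes unless every index value is attained at least twice, so only $f$ with image of size $r\le k$ survive. Grouping by the induced set partition and using $|\mathbb{E}[g^{m}]|\le a^{m}$ on each block, the surviving contribution is bounded by $a^{2k}\sum_{r\le k}\#\{\text{partitions of }[2k]\text{ into }r\text{ blocks of size}\ge 2\}\,N^{r}$, whose leading term is the pairings, namely $(2k-1)!!\,(Na^2)^k$, the configurations with $r<k$ carrying strictly smaller powers of $N$. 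Substituting into the series produces $\sum_k \tfrac{\eta^k a^{2k}}{k!}(2k-1)!!\,(1+o(1))$, and a Stirling estimate on $(2k-1)!!/k!$ turns this into a geometric-type series whose radius of convergence is exactly the threshold $\|\psi\|_\infty<\tfrac{1}{2e}$; summing it yields a finite constant $C$ depending only on $\eta$ and $\|\psi\|_\infty$.

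The main obstacle is the combinatorial bookkeeping in the third step: one must argue cleanly that only the fully paired index configurations contribute at leading order, count them together with the lower-order configurations (fewer than $k$ distinct indices) without squandering the factorial cancellation against $1/k!$, and track the constants sharply enough to recover the stated convergence threshold, the factor $e$ arising from the Stirling asymptotics. As a conceptual sanity check one can instead route through concentration: Hoeffding's lemma makes $T$ sub-Gaussian with variance proxy $Na^2$, and the Gaussian-chaos bound $\mathbb{E}[\exp(sT^2)]\le(1-2sNa^2)^{-1/2}$ valid for $2sNa^2<1$ gives finiteness under a smallness condition of the same form, confirming that the smallness of $\|\psi\|_\infty$ is precisely what controls the integral.
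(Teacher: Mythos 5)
The paper itself contains no proof of this lemma: it is quoted from \cite{jabin2018quantitative} (the large-deviation/LLN-at-exponential-scale theorem there), so the comparison is really with that source. Your main plan is exactly the Jabin--Wang argument: write the double sum as $S^2$ with $S=\sum_j\psi(x_1,x_j)$, condition on $x_1$ so that $g=\psi(x_1,\cdot)$ is $\rho$-centered, expand the exponential, and kill all index maps that do not hit every value at least twice. The ``main obstacle'' you honestly flag --- counting partitions with blocks of size $\ge 3$ against the $1/k!$ without losing the threshold --- is precisely the nontrivial content of their proof, so your first route is a correct plan but not a complete proof. Two small points there: the cross term $2g(x_1)T$ requires the $(1+\epsilon)T^2+(1+\epsilon^{-1})a^2$ splitting (absorbed by the strict inequality in the hypothesis), and your claim that the pairing term alone has radius of convergence ``exactly'' $1/(2e)$ is not right: pairings give $(2k-1)!!/k!\sim 2^k/\sqrt{\pi k}$, hence the condition $2\eta a^2<1$ with no factor $e$; the $e$ in Jabin--Wang enters only through $k!\ge (k/e)^k$ when controlling the non-pairing partitions in the regime $k\gtrsim N$. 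On the other hand, your ``sanity check'' is actually a complete and much shorter proof and deserves to be the main argument: by Hoeffding's lemma $T=\sum_{j\ge 2}g(X_j)$ is sub-Gaussian with variance proxy $(N-1)a^2$ uniformly in $x_1$, and the Gaussian linearization $e^{sT^2}=\mathbb{E}_Z e^{\sqrt{2s}\,TZ}$ with Fubini gives $\mathbb{E}\,e^{sT^2}\le \bigl(1-2s(N-1)a^2\bigr)^{-1/2}$, so $s=\eta(1+\epsilon)/N$ yields a bound uniform in $N$ and $x_1$ whenever $2\eta(1+\epsilon)a^2<1$.

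The genuine issue your attempt skates over is that \emph{every} route --- the combinatorial one, the sub-Gaussian one --- produces a smallness condition coupling $\eta$ and $\|\psi\|_{L^\infty}$, whereas the lemma as stated in this paper allows ``arbitrary $\eta$'' with the $\eta$-independent threshold $\|\psi\|_{L^\infty}<\frac{1}{2e}$. In that form the statement is false: if $g$ has variance $\sigma^2>0$, then $T/\sqrt{N}$ converges in law to $N(0,\sigma^2)$, and Fatou gives $\liminf_N \mathbb{E}\exp\bigl(\tfrac{\eta}{N}T^2\bigr)\ge \mathbb{E}\,e^{\eta\sigma^2Z^2}=\infty$ as soon as $2\eta\sigma^2\ge 1$, which is compatible with $\|\psi\|_{L^\infty}<\frac{1}{2e}$ once $\eta\ge 2e^2$. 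In \cite{jabin2018quantitative} the exponent carries no free constant (effectively $\eta=1$), and the threshold is calibrated to that normalization; the correct reading here --- consistent with how the lemma is invoked in Step 5, where $\eta=4C_{LS}/\sigma$ is fixed and $\|K\|_{L^\infty}$ is then ``taken small enough'' --- is that $\sqrt{\eta}\,\|\psi\|_{L^\infty}$ must lie below the threshold, i.e.\ one rescales $\psi\mapsto\sqrt{\eta}\,\psi$. Under that reading your sub-Gaussian argument is a complete proof, and your combinatorial sketch matches the cited one modulo the bookkeeping you deferred; but you should not have asserted that the stated $\eta$-free threshold is recovered, since your own concentration bound already shows it cannot be.
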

\bibliographystyle{plain}
\bibliography{sn-bibliography}

\end{document}